\title[Collapsing of $ALH^*$-Gravitational Instantons]{Collapsing of $ALH^*$-Gravitational Instantons}
  \author[Y.-S. Lin] {Yu-Shen Lin}
   \email{yslin@bu.edu}
  \address{Department of Mathematics, Boston University, 665 Coommonwealth Ave, Boston, MA 02215}
    \thanks{Y.-S. L. is supported in part by Simons collaboration grant \# 635846 and NSF grant DMS \#2204109.}
\author[R. Takahashi]{Ryosuke Takahashi}
\email{tryotriple@gmail.com}
\address{Department of Mathematics, National Cheng-Kung University , No. 1, Dasyue Rd, East District, Tainan City, Taiwan 701}
\thanks{R. Takahashi is supported by  Ministry of Science
and Technology of Taiwan under grant MOST 111-2636-M-006-023.}
\theoremstyle{plain}
\newtheorem{thm}{Theorem}[section]
\newtheorem{prop}[thm]{Proposition}
\newtheorem{defn}[thm]{Definition}
\newtheorem{lem}[thm]{Lemma}
\newtheorem{cor}[thm]{Corollary}
\newtheorem{conj}[thm]{Conjecture}
\newtheorem{que}[thm]{Question}
\theoremstyle{definition}
\newtheorem{ex}[thm]{Example}
\newtheorem{rk}[thm]{Remark}
\numberwithin{equation}{section}
\newcommand{\T}{\mathcal{T}}
\newcommand{\be}{\begin{equation}}
\newcommand{\bea}{\begin{eqnarray}}
\newcommand{\eea}{\end{eqnarray}}
\newcommand{\ee}{\end{equation}}
\renewcommand{\leq}{\leqslant}
\renewcommand{\geq}{\geqslant}
\renewcommand{\le}{\leqslant}
\renewcommand{\epsilon}{\varepsilon}
\renewcommand{\phi}{\varphi}
\def\Xint#1{\mathchoice
   {\XXint\displaystyle\textstyle{#1}}%
   {\XXint\textstyle\scriptstyle{#1}}%
   {\XXint\scriptstyle\scriptscriptstyle{#1}}%
   {\XXint\scriptscriptstyle\scriptscriptstyle{#1}}%
   \!\int}
\def\XXint#1#2#3{{\setbox0=\hbox{$#1{#2#3}{\int}$}
     \vcenter{\hbox{$#2#3$}}\kern-.5\wd0}}
\def\dashint{\Xint-}
\begin{document}

\begin{abstract}
   By the uniformization theorem of \cite{HSVZ}, any $ALH^*$-gravitational instanton $(X,g)$ can be compactified to a weak del Pezzo surface $Y$ as a complex manifold by adding a smooth anti-canonical divisor $D$. A sequence of $ALH^*$-gravitational instantons $X_i$ converges to a large complex structure limit if the corresponding sequence of weak del Pezzo surfaces $Y_i$ converges to a weak del Pezzo surface $Y_0$ and the corresponding sequence of anti-canonical divisors $D_i$ converges to a nodal curve   
    \cite{CJL2}.
   In this paper, we show that when $(X_i,g_i)$ converges to a large complex structure limit, then $X_i$ with a suitable scaling of the metric $g_i$ pointed Gromov-Hausdorff converges to
    a punctured plane with a special K\"ahler metric. The result can be viewed as a non-compact version of the collapsing result of Gross-Wilson \cite{GW}. Moreover, we provide a partial compactification of the moduli space of pointed $ALH^*$-gravitational instantons with respect to the pointed Gromov-Hausdorff topology. 
	 \end{abstract}
\maketitle
\section{Introduction}
The gravitational instantons are  complete, non-compact hyperK\"ahler $4$-manifolds with $L^2$-bounded curvature. They first appeared in the study of Euclidean quantum gravity introduced by Hawking in 1976 \cite{Haw}. Classified by their volume growth and curvature decay rates, there are exactly six types of gravitational instantons, named as $ALE$, $ALF$, $ALG$, $ALH$, $ALG^*$ and $ALH^*$\cite{SZ}.

In this paper, we focus on $ALH^*$-gravitational instantons which have exotic volume growth $r^{4/3}$. There are two sources of $ALH^*$-gravitational instantons: 
\begin{enumerate}
	\item Let $\check{Y}$ be a weak del Pezzo surface of degree $d$ and $\check{D}\in |-K_{\check{Y}}|$ be a smooth anti-canonical divisor, then $\check{X}=\check{Y}\setminus \check{D}$ admits a Ricci-flat metric asymptotic to the Calabi ansatz \cite{TY}. In particular, $\check{X}$ is an $ALH^*$-gravitational instanton. 
	\item Let $Y$ be a rational elliptic surface and $D$ be an $I_d$-fibre, then ${X}={Y}\setminus {D}$ admits a Ricci-flat metric asymptotic to the standard semi-flat metrics \cite{Hein}. In particular, $X$ is an $ALH^*$-gravitational instanton. 
\end{enumerate}
Hein \cite{Hein} observed that the above two geometry have the same tangent cone, curvature decay, injectivity radius decay and volume growth,...etc. Since hyperK\"ahler rotations do not change the underlying Riemannian metrics, it is natural to expect that after a suitable hyperK\"ahler rotation the former gives the latter. This is proved by the first author and T. Collins and A. Jacob: 
\begin{thm}\cite{CJL} \label{theorem:CJL} Let $\check{X}$ be the complement of a smooth anti-canonical divisor $\check{D}$ in a weak del Pezzo surface $\check{Y}$ of degree $d$ with the Tian-Yau metric $\check{\omega}$. Denote by $\check{\Omega}$ a meromorphic $2$-form on $\check{Y}$ with a simple pole along $\check{D}$ such that $\check{\omega}^2=\frac{1}{2}\check{\Omega}\wedge \bar{\check{\Omega}}$. 
	Given a primitive class $\alpha\in H_1(\check{D},\mathbb{Z})$,
	 then $\check{X}$ admits a special Lagrangian fibration $\check{\pi}:\check{X}\rightarrow \mathbb{R}^2$ with respect to $(\check{\omega},\check{\Omega})$ such that the fibre class is determined by $\alpha$. Moreover, a suitable hyperK\"ahler rotation $X$ with K\"ahler form $\omega$ and holomorphic volume form $\Omega$ given by 
	   \begin{align} \label{HK rel}
	   	   \omega=\mbox{Re}\check{\Omega}, \hspace{4mm} \Omega=\check{\omega}-i\mbox{Im}\check{\Omega},
	   \end{align}
	 admits an elliptic fibration $\pi:X\rightarrow \mathbb{C}$ and it can be compactified to a rational elliptic surface $Y$ by adding an $I_d$-fibre at its end. See the picture below.
	\begin{equation*}
		\begin{tikzcd}[column sep=2em]
			&\check{X}\ar[d,"\check{\pi}"]\ar[bend left,rrr,"\mathrm{HK~rotation}"]& & & 
			{X}\ar[d,"\pi"]\ar[r,hook] & ({Y},\mathrm{I}_{d})\ar[d]\\
			&\mathbb{R}^2 & & & \mathbb{C}\ar[r,hook] & (\mathbf{P}^{1},\infty)
		\end{tikzcd}
	\end{equation*}
\end{thm}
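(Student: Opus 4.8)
The plan is to translate the statement, via the hyperK\"ahler rotation \eqref{HK rel}, into the construction of a holomorphic elliptic fibration, to build that fibration on the asymptotic Calabi region, and to propagate it over the interior by a deformation argument. First, $(\omega,\mathrm{Re}\,\Omega,\mathrm{Im}\,\Omega)=(\mathrm{Re}\,\check\Omega,\check\omega,-\mathrm{Im}\,\check\Omega)$ lies in the $SO(3)$-orbit of the hyperK\"ahler triple $(\check\omega,\mathrm{Re}\,\check\Omega,\mathrm{Im}\,\check\Omega)$, hence is itself hyperK\"ahler; write $J_\Omega$ for the associated complex structure and $X$ for $\check X$ equipped with it. For a real $2$-torus $T\subset\check X$, the phase-$0$ special Lagrangian conditions $\check\omega|_T=0$, $\mathrm{Im}\,\check\Omega|_T=0$, $\mathrm{Re}\,\check\Omega|_T>0$ are equivalent to $\Omega|_T=0$ together with $\omega|_T>0$, i.e.\ to $T$ being a $J_\Omega$-holomorphic curve of positive symplectic area. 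Thus a phase-$0$ special Lagrangian fibration of $(\check X,\check\omega,\check\Omega)$ is the same datum as a holomorphic elliptic fibration $\pi\colon X\to B$, and it suffices to produce such a $\pi$ with $B\cong\mathbb{C}$ and to compactify it.

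Near $\check D$ the Tian--Yau metric is $C^\infty$-asymptotic, at a polynomial rate, to the Calabi model attached to $\check D\cong\mathbb{C}/\Lambda$, to the degree-$d$ line bundle $N_{\check D}=-K_{\check Y}|_{\check D}$, and to the flat structure fixed by $\theta:=\mathrm{Res}_{\check D}\check\Omega$; normalize $\Lambda=\mathbb{Z}\lambda_1+\mathbb{Z}\lambda_2$ with $\lambda_1=\int_\alpha\theta$. The link of $\check X$ at infinity is the nilmanifold carried by the degree-$d$ circle bundle over $\check D$; its restriction to the $\alpha$-circle is a trivial $T^2$, and these tori sweep out the link as one moves along the complementary circle, exhibiting the end of $\check X$ as a $T^2$-bundle over $S^1\times[R,\infty)$ whose monodromy on the fibre homology is the unipotent $\mathrm{Id}+dN$ with $N$ nilpotent. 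On the Calabi model these $T^2$ are exactly the phase-$0$ special Lagrangian tori, so an implicit-function-theorem argument in weighted H\"older spaces, using the polynomial decay to the model, upgrades them to a genuine special Lagrangian $T^2$-fibration of an end $\check X\setminus K$ over $S^1\times[R,\infty)$; after the rotation this is a holomorphic elliptic fibration of the end of $X$ over a punctured disc $\Delta^*$, with $I_d$ monodromy about the puncture.

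The remaining task, and the main obstacle, is to extend the fibration over all of $\check X$. Let $B\subseteq\mathbb{R}^2$ be the maximal subset over which the special Lagrangian fibration extends, allowing isolated nodal fibres; it contains the end, and it is open: at a regular fibre this is McLean's unobstructedness, the phase-$0$ special Lagrangian torus moving in a $2$-parameter family ($=b_1(T^2)$) foliating a neighbourhood, while at a nodal fibre the local focus--focus model shows the family persists with discrete singular set. \emph{Closedness} is where the real work lies: one must rule out a family of special Lagrangian tori over a bounded part of $B$ that degenerates pathologically or escapes to infinity in $\check X$. The control comes from the two integral affine structures on $B$, whose differentials are the fluxes $\int_\gamma\check\omega$, $\int_\gamma\mathrm{Im}\,\check\Omega$ and the calibrated area $\int_{[T]}\mathrm{Re}\,\check\Omega$ of the fibres: these, together with the monodromy, are pinned down on the end by the Calabi model, and, combined with completeness of $\check\omega$ and the $ALH^*$ volume growth $r^{4/3}$, confine the fibres to a fixed compact set and force only nodal limits. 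A Gauss--Bonnet/Euler-characteristic count using $\chi(\check X)=\chi(\check Y)-\chi(\check D)=(12-d)-0=12-d$ then shows that $\check X\to\mathbb{R}^2$ is proper with connected fibres and exactly $12-d$ nodal values (counted with multiplicity, generically all of type $I_1$). Alternatively, one may note that $X$ is an $ALH^*$ gravitational instanton with the semi-flat asymptotic model and invoke a Torelli-type classification of such spaces, but the continuity method above is self-contained.

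Finally, the rotation turns the special Lagrangian fibration into a proper holomorphic elliptic fibration $\pi\colon X\to\mathbb{C}$: the base acquires the complex structure for which $\pi$ is holomorphic, and the simply connected Riemann surface $B$ must be $\mathbb{C}$ rather than the disc since its unique end is conformally a punctured disc of infinite modulus, as read off from the asymptotic model. The monodromy of $\pi$ about $\infty\in\mathbf{P}^1$ is of type $I_d$, so Kodaira's classification lets us fill in an $I_d$ fibre $D$, giving a relatively minimal elliptic surface $Y\to\mathbf{P}^1$ with $X=Y\setminus D$; this $Y$ has no multiple fibres, its elliptic fibration arising from a Lagrangian torus fibration over the smooth base $\mathbb{R}^2$, and $\chi_{\mathrm{top}}(Y)=\chi(\check X)+\chi(I_d)=(12-d)+d=12$, so $Y$ is a rational elliptic surface. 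Unwinding the construction, the special Lagrangian fibres on $\check X$ are asymptotic to the $2$-tori spanned by $\alpha$ and the circle-bundle fibre, so the fibre class is indeed the one determined by $\alpha$. Of all this, only the closedness estimate of the third paragraph is genuinely hard; the rest is the formal rotation dictionary, classical facts (McLean's deformation theory, Kodaira's classification of singular fibres, the structure theory of rational elliptic surfaces), or explicit computation on the Calabi/semi-flat model.
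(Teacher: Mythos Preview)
This theorem is not proved in the present paper; it is quoted from \cite{CJL}, and the paper offers only a one-paragraph sketch in Section~\ref{sec: ALH* gi}. That sketch agrees with your first two paragraphs: one deforms the model special Lagrangian tori in the Calabi ansatz to a genuine special Lagrangian fibration on a neighbourhood of infinity of $\check X$, and after the hyperK\"ahler rotation this becomes an elliptic fibration over a punctured disc with monodromy conjugate to $\bigl(\begin{smallmatrix}1&d\\0&1\end{smallmatrix}\bigr)$.

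The substantive difference is in how the fibration is extended over the interior. You propose a continuity method on the special Lagrangian side, with openness via McLean and closedness via flux/affine-structure control, and you correctly flag closedness as the real obstacle; as written, that step is only a heuristic (``confine the fibres to a fixed compact set and force only nodal limits'') and would require serious work to make rigorous. The route sketched in the paper, following \cite{CJL}, sidesteps this entirely: once one has the elliptic fibration on the end, one compactifies $X$ directly by pulling back the universal family over $\bar M_{1,1}$ along the extended period map $\Delta\to\bar M_{1,1}$, producing a compact complex surface $Y$; the Enriques--Kodaira classification (using that $\Omega$ extends meromorphically with a simple pole along the added $I_d$ fibre, so $-K_Y=[D]$) then identifies $Y$ as a rational elliptic surface, and the anticanonical pencil furnishes the elliptic fibration on all of $Y$, hence on all of $X$. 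So the hard analytic closedness estimate is traded for classical complex-surface theory. Your proposed alternative of invoking a Torelli-type classification of $ALH^*$ instantons would be circular here, since those results (Theorems~\ref{uniformization}, \ref{uniformization theorem}, \ref{torelli}) take Theorem~\ref{theorem:CJL} as input.
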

In short, a suitable hyperK\"ahler rotation of $\check{X}$, which is described in (1), becomes $X$, which is the geometry described in (2) as complex manifolds. However, the subtle part is that the metrics from hyperK\"ahler rotation of (1) are not necessarily those asymptotic to the standard semi-flat metrics of Greene-Shapere-Vafa-Yau \cite{GSVY}. To understand the difference of (1) and (2) in the metric level, Collins-Jacob-Lin introduced the notion of generalized semi-flat metrics and proved the following theorem.
\begin{thm} \cite{CJL2}
    With the notation in Theorem \ref{theorem:CJL}, the Ricci-flat metric on $X$ is asymptotic to the generalized semi-flat metric. In particular, there exists a $\mathbb{R}$-family of Ricci-flat metrics on $X$ with those indexed by $\mathbb{Z}$ are the metrics constructed by Hein.  
\end{thm}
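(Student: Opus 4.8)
The plan is to localize near $\check D$, match the geometry there to an explicit model, and transfer by perturbation; throughout we freely use the output of Theorem~\ref{theorem:CJL} (the special Lagrangian fibration $\check X\to\mathbb{R}^2$, the hyperk\"ahler rotation $(X,\omega,\Omega)$, the elliptic fibration $X\to\C$ and its compactification to $Y$ with an $I_d$-fibre at infinity) together with the Tian--Yau/Hein asymptotics \cite{TY, Hein}: on the end of $\check X$, identified with a punctured neighborhood of the zero section of the normal bundle $N$ (of degree $d$) of $\check D$, with $t$ a fibre coordinate, the metric $\check\omega$ converges in $C^\infty$, at a rate $O(|t|^{\delta})$ for some $\delta>0$, to the Calabi ansatz metric $\check\omega_{\mathrm{Cal}}$ built from the flat metric on the elliptic curve $\check D$ and a Hermitian metric on $N$ with curvature proportional to it, and $\check\Omega$ converges to the corresponding model holomorphic two-form $\check\Omega_{\mathrm{Cal}}$. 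On $(\check\omega_{\mathrm{Cal}},\check\Omega_{\mathrm{Cal}})$ there is a torus symmetry — rotating the fibre of $N$ and translating $\check D$ in the direction of $\al$ — for which the special Lagrangian fibration with fibre class $\al$ is explicit: the fibres are the $T^2$-orbits over an explicit base $\mathbb{R}^2$, whose induced integral affine structure has monodromy $\left(\begin{smallmatrix}1&d\\0&1\end{smallmatrix}\right)$ at infinity. Applying the hyperk\"ahler rotation (\ref{HK rel}) to this model turns the $T^2$-fibres into holomorphic elliptic curves and the complexified base coordinate into a map $z$ onto a punctured disk, exhibits $X$ near infinity as an elliptic fibration over $\{0<|z|<\ve\}$ with an $I_d$-fibre to be added at $z=0$, and a direct computation identifies the rotated Calabi metric with a Greene--Shapere--Vafa--Yau semi-flat metric \cite{GSVY} of this fibration. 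The only freedom in that metric, beyond the period $\tau(z)=\frac{d}{2\pi i}\log z+\beta+o(1)$ (whose leading term is fixed by the complex structure) and the fibre area (fixed by the K\"ahler class), is the real constant $\beta$, the ``framing'' of the nilpotent end; this $\beta$ is precisely the parameter distinguishing the \emph{generalized} semi-flat metrics from the standard ones.

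With the model in hand, the first assertion follows by perturbation. The special Lagrangian fibration of \cite{CJL} is built by deforming the model fibration using the decay of $\check\omega-\check\omega_{\mathrm{Cal}}$, so on the end of $\check X$ the fibration, its induced affine structure, and the resulting identification of $X$ as an elliptic surface all stay $O(|t|^{\delta})$-close to the model; feeding this through (\ref{HK rel}) and the explicit form of the semi-flat metric shows that $\omega$ is asymptotic to the generalized semi-flat metric with parameter $\beta$ (the one read off above) at the same rate. The main obstacle is exactly here: one must show not merely that the special Lagrangian fibration persists near infinity but that it — and hence the complex coordinate $z$, the period $\tau(z)$, and the semi-flat reference metric they determine — remains \emph{uniformly} controlled by the Tian--Yau correction term, which is a weighted implicit-function/elliptic-estimate argument, and then extract from it the sharp decay of $\omega$ minus the generalized semi-flat metric.

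For the second assertion, first observe that Hein's metrics \cite{Hein} are constructed directly on the global rational elliptic surface from the semi-flat ansatz plus a Monge--Amp\`ere correction, and that construction uses only classes in $H^*(\,\cdot\,,\mathbb{Z})$, so the framing it realizes is integral; by the identification in the previous step the $\mathbb{Z}$-indexed members of the family are exactly Hein's. To produce the full $\mathbb{R}$-family on the fixed complex manifold $X$, for each $\beta\in\mathbb{R}$ take the corresponding generalized semi-flat metric as the model at infinity and solve $\omega_\beta^2=e^{f}\,\Omega\wedge\ov{\Omega}$ in weighted spaces — the Tian--Yau/Hein non-compact Calabi--Yau package applies, since the generalized semi-flat metric still has the $ALH^*$ volume growth and is approximately Ricci-flat — yielding a Ricci-flat K\"ahler metric asymptotic to the $\beta$-model; distinct $\beta$ give non-isometric metrics because $\beta$ is an asymptotic invariant. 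Equivalently, one can realize these metrics by varying the data entering the hyperk\"ahler rotation on the $\check X$-side. The secondary difficulty is the algebro-geometric bookkeeping that pins Hein's framing to $\mathbb{Z}$ and matches it with the $\beta$ coming from the Tian--Yau metric.
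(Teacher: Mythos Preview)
This theorem is not proved in the present paper; it is quoted from \cite{CJL2}, and the paper only records the two ingredients it rests on---Theorem~\ref{local model} (the explicit identification of the hyperK\"ahler-rotated Calabi ansatz with a generalized semi-flat metric, with $b_0=-\tfrac{1}{2}\mbox{Re}(\tau)d$) and Theorem~\ref{generalized Hein's metric} (the existence of Ricci-flat metrics asymptotic to any prescribed generalized semi-flat model)---also by citation. Your outline follows exactly this route: Tian--Yau/Hein asymptotics reduce the end of $\check X$ to the Calabi model, the hyperK\"ahler rotation of that model is computed explicitly to yield a generalized semi-flat metric, and the non-compact Monge--Amp\`ere package handles both the correction term and the construction of the $\mathbb{R}$-family. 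So on the level of strategy your proposal matches the paper's (cited) argument.

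One point needs correction. Your explanation of why the $\mathbb{Z}$-indexed members are exactly Hein's metrics---``that construction uses only classes in $H^*(\cdot,\mathbb{Z})$''---is not the right mechanism. Hein's construction allows arbitrary real K\"ahler classes; what singles out his metrics is that they are built from the \emph{standard} semi-flat metric, which presupposes a global holomorphic section of the elliptic fibration near the $I_d$-fibre. As explained in Section~\ref{sec: generalized sf metric} (the Remark there and the discussion of the multi-section $\sigma$), the generalized semi-flat metric with parameter $b_0$ is the pullback of the standard one by fibrewise translation along a multi-section, and that multi-section extends to an honest section over the full disk if and only if $2b_0/d\in\mathbb{Z}$ \cite[Lemma~3.28]{FM}. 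So the integrality enters through the monodromy/section condition at the $I_d$ end, not through integral cohomology on the del Pezzo side; your ``framing'' parameter $\beta$ should be identified with $b_0$ via Theorem~\ref{local model} rather than read off as a constant term in the period $\tau(z)$.
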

In other words, with the introduction of the generalized semi-flat metric, we have the two constructions of $ALH^*$-gravitational instantons are the same. Later Hein-Sun-Viaclovsky-Zhang \cite{HSVZ2} and Collins-Jacob-Lin \cite{CJL3} proved that all the $ALH^*$-gravitational instantons are given by the above constructions. From the classification of weak del Pezzo surfaces or the rational elliptic surfaces with an $I_d$-fibre, there are precisely $10$ diffeomorphism types of $ALH^*$-gravitational instantons. 

For a given diffeomorphism type of gravitational instantons, the cohomology classes of the hyperK\"ahler triples define the so-called period map on the space of marked gravitational instantons (see Definition \ref{def: marked ALH*}), which is the first natural invariant of the gravitational instantons. Collins-Jacob-Lin \cite{CJL3} proved that the cohomology classes of the hyperK\"ahler triples uniquely determine the hyperK\"ahler triples. Lee-Lin later \cite{LL} characterized the image of the period map of marked $ALH^*$-gravitational instantons. These two results together completely characterize the moduli space of $ALH^*$-gravitational instantons. Then the next natural question to ask is:
\begin{que}
	What are the compactification of the moduli spaces of pointed $ALH^*$-gravitational instantons with respect to the pointed Gromov-Hausdorff distance? 
\end{que}	
In the work of Lin-Soundararajan-Zhu \cite{LSZ}, some non-collapsing degenerations of gravitational instantons are studied, and explain how different moduli spaces of gravitational instantons are connected. In this paper, we will study a collapsing degeneration of the $ALH^*$-gravitational instantons. 

The collapsing degeneration of $ALH^*$-gravitational instantons comes from a modified Strominger-Yau-Zaslow conjecture proposed by Kontsevich-Soibelman \cite{KS} and they predicted that Calabi-Yau manifolds converging to the large complex structure limits collapse to affine manifolds with singularities. We first recall the situation of the K3 analogy. Kurikov classified the semi-stable degeneration of K3 surfaces \cite{K} and the central fibre must be one of the following:
\begin{itemize}
	\item (Type I) a smooth K3 surface, or 
	\item (Type II) chain of elliptic ruled surfaces with rational surfaces either ends, or
	\item (Type III) union of rational surfaces such that the double curves on each component is a cycle of rational curves and the dual intersection complex of the central fibre is a triangulation of $S^2$. 
\end{itemize}
When the metrics are rescaled suitably in the sense that the sequence of K3 surfaces have bounded diameter, type I degenerations correspond to non-collapsing degenerations of K3 metrics (with suitable scaling) \cite{A}. Type II degenerations correspond to collapsing to an interval \cite{HSVZ, HSZ}. Type III degenerations, referred to large complex structure limits in the context of mirror symmetry, correspond to the collapsing limit being $2$-dimensional integral affine manifolds with singularities \cite{GW, OO}. In short, the collapsing pictures distinguish the type of semi-stable degenerations for K3 surfaces and collapsing to affine manifolds with singularities can be viewed as the characterization of large complex structure limits from the SYZ viewpoint. Inspired by SYZ mirror symmetry, Collins-Jacob-Lin proposed a definition of the large complex structure limits for pairs of del Pezzo surfaces together with a smooth anti-canonical divisor: a sequence of pairs $(\check{Y}_i,\check{D}_i)$ converges to a large complex structure limit if $\check{Y}_i$ converges to a smooth del Pezzo surface $\check{Y}$ and $\check{D}_i$ converges to $\check{D}\in |-K_{\check{Y}}|$ being nodal and irreducible. The following theorem justifies the notion of the large complex structure limits introduced in \cite{CJL2}. 	
\begin{thm}[see Theorem \ref{thm: compactify 1}(4)] \label{main thm}
	Given a sequence $(\check{Y}_i,\check{D}_i)$ converges to the large complex structure limit and let $\check{X}_i=\check{Y}_i\setminus \check{D}_i$. With a suitable scaling of Tian-Yau metric $\check{g}_i$ on $\check{X}_i$ and $\check{x}_i\in \check{X}_i$ stay in a specific finite region\footnote{See the definition in Section \ref{sec: compactification}.}, then the pointed Gromov-Hausdorff limit of the sequence  $(\check{X}_i, \check{g}_i,\check{x}_i)$ is an affine manifold with singularities and equipped with a Hessian metric. 
\end{thm}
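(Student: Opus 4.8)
The plan is to establish the Gromov--Hausdorff convergence in two stages: first on the region far from the singular fibers, where the Tian--Yau (equivalently, via hyperK\"ahler rotation, the elliptic) metrics are uniformly close to an explicit semi-flat model, and then near the singular fibers, where one controls the contribution of the bubbling regions and shows they shrink to points (the affine singularities) after rescaling. I would begin by invoking Theorem~\ref{theorem:CJL} to pass, for each $i$, to the hyperK\"ahler-rotated picture $X_i \hookrightarrow (Y_i, \mathrm{I}_{d_i})$ together with its elliptic fibration $X_i \to \mathbb{C}$, so that the large-complex-structure hypothesis on $(\check Y_i, \check D_i)$ translates into control on the complex structures of the rational elliptic surfaces $Y_i$ and the positions of their singular fibers. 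The correct scaling is the one making the base $\mathbb{C}$ (with its induced special K\"ahler / Hessian metric coming from the periods) have fixed size; with this normalization the elliptic fibers have area $O(\epsilon_i) \to 0$, which is the source of the collapsing.

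The key steps, in order: (1) Fix the affine base. Using the period characterization of \cite{CJL3, LL}, identify the limiting integral affine structure with singularities on (a region of) $\mathbb{R}^2 \cong \mathbb{C}$ determined by the limiting periods of $(\check Y, \check D)$, and equip it with the Hessian metric arising from the limiting special K\"ahler potential; the ``specific finite region'' in the statement is the locus where the rescaled base points $\check x_i$ accumulate. (2) Semi-flat approximation away from singularities. On any compact subset of the base avoiding the discriminant, use the Collins--Jacob--Lin result that the Ricci-flat metric is uniformly asymptotic to the generalized semi-flat metric, together with an $\epsilon_i$-regularity / a priori estimate argument (Hein-type weighted estimates, or a Cheeger--Fukaya--Gromov collapsing-with-bounded-curvature argument on these regions), to show the rescaled fibers collapse and the base converges in Gromov--Hausdorff distance to the Hessian metric. (3) Local models at the nodes. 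Near each node of $\check D$ (equivalently, each singular fiber appearing in the limit) exhibit the local Gromov--Hausdorff limit using an Ooguri--Vafa-type or Gibbons--Hawking ansatz model, and verify the local collapsing limit is the affine singularity (a single point together with the prescribed monodromy in the integral affine structure). (4) Gluing and diameter control. Patch the local pictures to the semi-flat region, using a uniform diameter bound for the rescaled fibers and a uniform lower Ricci bound (Ricci-flatness) plus volume control to invoke Gromov precompactness, and check the limits agree on overlaps, yielding the global pointed Gromov--Hausdorff limit. (5) Identify the limit metric as Hessian / special K\"ahler, reading off the potential from the McLean metric on the special Lagrangian moduli, which is Hessian in the affine coordinates.

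I expect the main obstacle to be Step~(3) together with the gluing in Step~(4): controlling the geometry near the singular fibers during the collapse is genuinely delicate because curvature is not uniformly bounded there (curvature concentrates in the bubbling regions), so the naive collapsing-with-bounded-curvature theory does not apply directly. One must instead combine the explicit local hyperK\"ahler models (Ooguri--Vafa / multi-Taub--NUT) with weighted a priori estimates in the spirit of Hein and Gross--Wilson to show that, after rescaling, these regions have diameter $o(1)$ relative to the base and therefore contribute only the affine singular points; quantifying this and matching the singular affine monodromy to the vanishing cycle data of $\check D_i$ is where the real work lies. A secondary difficulty is uniformity: all estimates must be uniform in $i$ as $(\check Y_i, \check D_i)$ approaches the boundary of moduli, which requires care because the degree-$d$ del Pezzo surfaces can themselves degenerate to weak del Pezzo surfaces, so one must ensure the relevant constants in the Tian--Yau and semi-flat estimates are controlled purely in terms of the limiting period data rather than the smooth geometry of individual $\check Y_i$.
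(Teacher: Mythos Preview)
Your outline hits the same ingredients the paper uses (hyperK\"ahler rotation to the elliptic side, semi-flat model, Ooguri--Vafa near singular fibers, Hein-type estimates), but the paper organizes them differently and relies on one input you have not isolated.

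Rather than patching local collapsing results as in your Steps~(2)--(4), the paper first builds in Section~\ref{sec: ansatz} a single \emph{global} ansatz K\"ahler form $\omega^a_\epsilon$ on each $X_i$ by gluing the semi-flat metric to Ooguri--Vafa in $O(\epsilon^{1/2})$-balls around the discriminant (Lemma~\ref{ansatz metric}), then twists by a $B$-field to reach every K\"ahler class (Lemma~\ref{ansatz metric 2}); the Ricci potential $f_\epsilon$ of this ansatz is exponentially small. Writing the actual Ricci-flat metric as $\omega_\epsilon=\omega^a_\epsilon+i\partial\bar\partial u_\epsilon$, the core of the proof is a single global Monge--Amp\`ere estimate (Section~\ref{sec: a priori}): Hein's iteration on the $CYL(1/3)$ end yields $\int_X|\nabla u_\epsilon|^2\le C\epsilon^{4/3}$ uniformly (Proposition~\ref{lema_Hein}), and then Nash--Moser plus Schauder give $\|\omega_\epsilon-\omega^a_\epsilon\|_{C^{0,\alpha}}\lesssim \epsilon^{1/6}$ on any fixed ball (Theorem~\ref{c2alpha}, Corollary~\ref{cor: limit}). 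The collapsing is then read off directly from the explicit ansatz, whose fiber diameter is $\lesssim \epsilon^{1/2}|\log\epsilon|$. This global-PDE route bypasses your gluing Step~(4) and the curvature-blowup issue you flag in Step~(3): the Ooguri--Vafa model enters only to build the ansatz, not as a separate local collapsing analysis. Note also that the Collins--Jacob--Lin asymptotic you invoke in Step~(2) controls the metric only at the \emph{end} of $X_i$, not on compact subsets of the base; it is the global Monge--Amp\`ere estimate, not that asymptotic, which gives the semi-flat approximation over the finite region.

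The one genuine missing input in your plan is Theorem~\ref{LLL} (Lau--Lee--Lin): when $(\check Y_i,\check D_i)$ converges to a large complex structure limit, the hyperK\"ahler-rotated rational elliptic surfaces $Y_i$ converge to a \emph{fixed} rational elliptic surface $Y_0$. This is what identifies the limiting base with its special K\"ahler metric and, crucially, is what makes the $CYL(1/3)$ constants and all PDE constants uniform in $i$---they depend only on $Y_0$. Your Step~(1) assumes this identification comes from ``limiting periods of $(\check Y,\check D)$'', but that translation is itself the nontrivial content of Theorem~\ref{LLL}; without it the uniformity concern you raise at the end remains unresolved.
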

Now we make some comments on the theorem: traditionally, mirror symmetry is expected to  happen at the so-called ``large complex structure limits". In terms of mathematics, the large complex structure limits are understood as the degeneration of Calabi-Yau manifolds into the deepest stratum in the moduli space, for instance the toric degeneration of Calabi-Yau manifolds. The mirror symmetry of pairs consisting of a del Pezzo surface surface and a smooth anti-canonical divisors is first studied in the work of Carl-Pumperla-Siebert \cite{CPS}. The large complex structure limits happen when the 
del Pezzo surfaces admit a toric degeneration and the smooth anticanonical divisors degenerate into an anticanonical cycle. In particular, the definition of the large complex structure limit considered in this paper is not the one used in traditional literature and thus such collapsing result is not expected a priori. 

Another difference of the current result from the work of Gross-Wilson \cite{GW} and Chen-Viaclovsky-Zhang \cite{CVZ} is the geometric setting. They studied the metric behavior for a $1$-parameter family of Calabi-Yau metrics on a fixed manifold with a Calabi-Yau fibration structure when the volume of the fibre degenerates. From the mirror symmetry perspective, one will need to verify that the hyperK\"ahler rotation of a sequence toward the adiabatic limit is converging to the large complex structure limit. However, such hyperK\"ahler rotations are usually non-algebraic and do not fall into an algebraic family for studying mirror symmetry. 
We also point out that due to non-compactness of the ambient spaces, many of the arguments in Gross-Wilson do not go through directly. If one appeals the hyperK\"ahler gluing method in Chen-Viaclovsky-Zhang, then certain Hodge theory statement for the $ALH^*$-gravitational instantons is missing in the literature. Furthermore, hyperK\"ahler gluing does not preserve the cohomology classes of hyperK\"ahler triple and it causes extra difficulties. 



Finally, we can describe part of the boundary of the moduli space of $ALH^*$-gravitational instantons. 
\begin{thm}[=Theorem \ref{thm: compactify 1}]
	There exists a partial compactification of the moduli space of pointed $ALH^*$-gravitational instantons with respect to the pointed Gromov-Hausdorff topology which is locally a polyhedron complex. 
\end{thm}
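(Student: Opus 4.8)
\emph{Proof proposal.} The plan is to describe the interior of the moduli space by period data, to attach the collapsed limits furnished by the collapsing statement of Theorem \ref{main thm} as boundary strata, and then to exhibit degeneration coordinates in which a neighbourhood of a boundary point becomes a polyhedral neighbourhood in a Euclidean space times a ball. Fix one of the ten diffeomorphism types of $ALH^*$-gravitational instantons. By the Torelli theorem for $ALH^*$-gravitational instantons \cite{CJL3} together with the characterization of the period image \cite{LL}, the moduli space $\mathcal{M}_{\mathrm{mk}}$ of marked such instantons is identified, as a real-analytic manifold, with an explicit open subset $\mathcal{P}$ of a period domain; the period data records the cohomology classes of the hyperK\"ahler triple on $X$ together with the asymptotic data at infinity (the class of the divisor $D$ and the asymptotic parameter of the generalized semi-flat model of \cite{CJL2}). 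The moduli space $\mathcal{M}$ of \emph{pointed} instantons is the total space of the universal family over $\mathcal{M}_{\mathrm{mk}}$ restricted to the ``specific finite region'' of base points of the footnote and then quotiented by the automorphisms of the marking; this region is chosen precisely so that the forgetful map $\mathcal{M}\to\mathcal{M}_{\mathrm{mk}}$ is proper with compact fibres, whence $\mathcal{M}$ is a manifold with corners. A continuity argument---continuous dependence of the Tian--Yau (Ricci-flat) metric on the period data via the a priori estimates of \cite{CJL2,CJL3} and an implicit function argument, combined with the fact that the base points range over a compact region---then shows that on $\mathcal{M}$ the period topology coincides with the pointed Gromov--Hausdorff topology.

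For each ``combinatorial type'' $\sigma$ of large complex structure degeneration---a choice of weak del Pezzo surface $\check{Y}_0$ (equivalently a rational elliptic surface with an $I_d$-fibre) together with the way the nodal anticanonical curve $\check{D}_0$ is positioned in it---the collapsing statement of Theorem \ref{main thm} produces a limiting affine manifold with singularities $B_\sigma$ carrying a Hessian metric, in the generic case a punctured plane with a special K\"ahler metric; the set of all such limits for a fixed $\sigma$ is a finite-dimensional parameter space $\mathcal{B}_\sigma$ coordinatised by the periods of the limiting meromorphic volume form over the cycles of $\check{D}_0$ (the ``central charge'' data of the special K\"ahler structure). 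We set $\overline{\mathcal{M}}:=\mathcal{M}\sqcup\bigsqcup_\sigma\mathcal{B}_\sigma$ and declare a sequence in $\mathcal{M}$ to converge to a point of $\mathcal{B}_\sigma$ when the rescaled pointed instantons Gromov--Hausdorff converge to the corresponding $(B_\sigma,g_{B_\sigma})$. That this prescription defines a Hausdorff, first countable topology, and that every sequence of pointed instantons with base points in the finite region subconverges in $\overline{\mathcal{M}}$, follows from Gromov precompactness together with the dichotomy that a subsequential limit is either non-collapsed---hence, by elliptic regularity and the structure theory of $ALH^*$ ends, again an interior point of $\mathcal{M}$---or collapsed, hence of the affine type above by the collapsing statement and $\epsilon$-regularity on the $ALH^*$ model.

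Interior points of $\overline{\mathcal{M}}$ are manifold points and thus have polyhedral neighbourhoods trivially, so it remains to analyse a neighbourhood of a boundary point $b\in\mathcal{B}_\sigma$. Here one introduces degeneration coordinates: a collapsing parameter $t\to 0$ measuring the modulus of $\check{D}_i$ at the forming node---equivalently $t$ comparable to $e^{-L}$ with $L$ the neck length, or the ratio of a vanishing period to a fixed one---together with the surviving period coordinates. The point is that in these coordinates the data that determine the limit---the chamber $\sigma$, the monodromy, and the central-charge parameters on $\mathcal{B}_\sigma$---become piecewise affine-linear functions of $\log t$ and of the surviving periods: $\sigma$ and the monodromy are locally constant, within a chamber the $\mathcal{B}_\sigma$-coordinates are linear, and the walls between adjacent chambers occur exactly where a vanishing cycle of the special Lagrangian fibration changes or the colliding nodes of $\check{D}_i$ recombine, the chambers being glued along these common faces. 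Thus a neighbourhood of $b$ in $\overline{\mathcal{M}}$ is modelled on a polyhedral neighbourhood of the origin in some $\mathbb{R}^k$ times an open ball, i.e. $\overline{\mathcal{M}}$ is locally a polyhedron complex near $b$.

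The principal obstacle is the precompactness / ``no further limits'' step together with the uniform metric control required to match the Gromov--Hausdorff topology with the coordinates above near the boundary. Because the ambient spaces are non-compact one cannot directly invoke Gross--Wilson \cite{GW} or a Cheeger--Gromov-type compactness theorem; instead one must propagate the a priori estimates for the Tian--Yau metrics through the degeneration $(\check{Y}_i,\check{D}_i)\to(\check{Y}_0,\check{D}_0)$ in a manner that is uniform up to the collapsing scale, while using the finite-region constraint on base points to exclude escape into the $ALH^*$ end (which would otherwise contribute the Calabi ansatz model, or its tangent cone, as spurious limits). Finally, pinning down the walls precisely---the combinatorics of degenerating anticanonical cycles, equivalently $I_d$-fibres, and of the associated affine manifolds with singularities---is what upgrades the boundary structure from merely stratified to genuinely polyhedral.
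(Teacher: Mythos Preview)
Your proposal and the paper's argument diverge substantially, and your route has a real gap at the point where the polyhedral structure is supposed to appear.

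The paper does not build an abstract compactification by attaching strata $\mathcal{B}_\sigma$ indexed by combinatorial degeneration types and then analyse ``walls'' in degeneration coordinates. Instead it reduces everything to two concrete real parameters. Via the hyperK\"ahler rotation of Theorem \ref{theorem:CJL} and Theorem \ref{local model}, an admissible sequence $(\check{Y}_i,\check{D}_i,[\check{\omega}_i],c_i,p_i)$ is converted to the elliptic side, where the metric is asymptotic to $\alpha_i\,\omega_{sf,b_{0,i},\epsilon_i}$ with $\epsilon_i\sim(\mbox{Im}\,\tau_i)^{-1}$. The new analytic input of the paper, Corollary \ref{cor: limit} (built on Theorem \ref{c2alpha} and Proposition \ref{lema_Hein}), gives a uniform $C^{0,\alpha}$ bound $\|\omega_\epsilon-\mathbb{B}\omega^a_\epsilon\|\le C_{\epsilon_0}R\epsilon^{1/6}$ on balls of fixed $d^{\epsilon_0}$-radius, so the Ricci-flat metric is uniformly close to the explicit ansatz as $\epsilon\to 0$. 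The pointed Gromov--Hausdorff limit is then read off directly from the ansatz and depends only on the asymptotics of the product $c_i\,\mbox{Im}(\tau_i)$: one gets flat $\mathbb{R}^2$, a scaling of the special K\"ahler metric, or $\mathbb{R}_+$ according as this product goes to $\infty$ (with $c_i$ small in a precise sense), a finite positive constant, or $0$. The ``polyhedron'' is exactly this cone decomposition of the $(\log c,\log\mbox{Im}\tau)$-plane, fibred over the remaining period/K\"ahler-class data; this is the content of the corollary immediately after Theorem \ref{thm: compactify 1}.

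Your argument never isolates these two parameters and never derives the trichotomy; the assertion that ``the data that determine the limit become piecewise affine-linear functions of $\log t$ and of the surviving periods'' is precisely the statement to be proved and is not justified by anything you wrote. Moreover, the uniform metric control you need as the modulus degenerates is \emph{not} in \cite{CJL2,CJL3}: those papers give estimates for a fixed (non-collapsing) asymptotic model. The estimate that does the work here---a uniform bound independent of $\epsilon\to 0$---is Theorem \ref{c2alpha}/Corollary \ref{cor: limit}, which in turn rests on the $CYL(\tfrac13)$ structure of the ansatz with $\epsilon$-independent constants and on the weighted energy decay of Proposition \ref{lema_Hein}. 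Without this input your ``continuity argument'' matching period topology with pointed Gromov--Hausdorff topology near the boundary does not go through, and the polyhedral conclusion is unsupported. Your stratification by ``combinatorial types $\sigma$'' and your description of $\mathcal{B}_\sigma$ via periods over cycles of $\check{D}_0$ also do not match the actual boundary, which consists of the special K\"ahler metric (up to scale), flat $\mathbb{R}^2$, and $\mathbb{R}_+$.
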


The arrangement of the paper is as follows: In Section \ref{sec: ALH}, we review the model metrics for $ALH^*$-gravitational instantons, i.e., Calabi ansatz and semi-flat metrics. We then survey the recent development of $ALH^*$-gravitational instantons. In Section \ref{ansatz metric}, we construct an almost Ricci-flat metric on $X$ in Theorem \ref{theorem:CJL} in every cohomology class when $\check{X}$ is close to the large complex structure limit by adapting an argument of Gross-Wilson \cite{GW}. We have global a priori estimates for degenerating metrics qualitatively in Section \ref{sec: a priori} following an argument of Hein \cite{Hein}. Then we use this to prove the collapsing of $ALH^*$-gravitational instantons and the partial compactification of the moduli space of pointed $ALH^*$-gravitational instantons and make a conjecture in Section \ref{sec: compactification}.


\section*{Acknowledgment}
  The first authors would like to thank T. Collins, A. Jacob, T.-J. Lee, S. Sun, R. Zhang for the helpful discussions. The first author would also want to thank National Cheng-Kung University for the hospitality during the visit in January 2023. The key statement is proved during the visit. The first author is supported by Simons collaboration grant \#635846 and NSF grant DMS-2204109. 1 The second author was partially supported by Ministry of
Science and Technology, Taiwan, R.O.C. under Grant no. MOST 111-2636-M-006-023.
\section{$ALH^*$-Gravitational Instantons}   \label{sec: ALH}
\subsection{Local Models} In this section, we will describe the two different models for   neighborhoods of the ends of the $ALH^*$-gravitational instantons. 
 \subsubsection{The Calabi ansatz} \label{sec: Calabi ansatz}
Let $\check{D}$ be an elliptic curve with $K_{\check{D}} \simeq \mathcal{O}_{\check{D}}$ and let $p: E\rightarrow \check{D}$ be a positive line bundle. Denote by $\check{Y}_{\mathcal{C}}$ a tubular neighborhood of the zero section in the total space of $E$ and by $\check{X}_{\mathcal{C}}$ the complement of the zero section in $\check{Y}_{\mathcal{C}}$. Let $\Omega_{\check{D}}$ denote the holomorphic volume form on $\check{D}$. Then $\check{X}_{\mathcal{C}}$ admits a holomorphic volume form given by
\begin{align*}
	\check{\Omega}_{\mathcal{C}}=p^*\Omega_{\check{D}}\wedge \frac{dw}{w},
\end{align*} 
where $w$ is a coordinate along the fibre of $E$. Notice that the single notation $\frac{dw}{w}$ is not well-defined. However, the ambiguity is killed by wedging with $p^*\Omega_{\check{D}}$ and thus the expression $\check{\Omega}_{\mathcal{C}}$ is well-defined. Let $\check{\omega}_D$ be the unique Ricci-flat (actually flat) metric on $\check{D}$ such that $[\check{\omega}_{\check{D}}]=2\pi c_1(E)$, and we normalize $\check{\Omega}_{\check{D}}$ so that
\[
\check{\omega}_{\check{D}} = \frac{\sqrt{-1}}{2}\check{\Omega}_{\check{D}}\wedge \overline{\check{\Omega}}_{\check{D}}.
\]
and let $h$ be a hermitian metric on $E$ such that $\check{\omega}_{\check{D}}=-\sqrt{-1}\partial \bar{\partial}\log h$. Consider the K\"ahler form
\begin{align*}
	\check{\omega}_{\mathcal{C}}=\sqrt{-1}\partial \bar{\partial}\frac{2}{3}\big(-\log{|\xi|^2_h}\big)^{\frac{3}{2}},
\end{align*} 
where $\xi \in E$.  By direct calculation we have
$
\check{\omega}_{\mathcal{C}}^2 = \frac{1}{2} \check{\Omega}_{\mathcal{C}} \wedge \overline{\check{\Omega}}_{\mathcal{C}}.
$
Geometrically, if we let $r(x)$ denote the distance from $x$ to a fixed point $x_0\in \check{X}_{\mathcal{C}}$, then a direct calculation shows that 
\begin{equation}\label{eq: geomTY}
	|\nabla^k Rm|\leq C_k r^{-\frac{k+2}{3}}, \qquad     {\rm inj}(x) \sim r^{-\frac{1}{3}}, \qquad {\rm Vol}(B_{r}(x_0)) \sim r^{\frac{4}{3}}.
\end{equation}In other words, the Riemannian curvature has polynomial decay, but the injectivity radius degenerates at their ends. Thus $\check{X}_{\mathcal{C}}$ is not of ``bounded geometry".  Furthermore, it is easy to check that the metric is complete as $|\xi|\rightarrow 0$ but incomplete as $|\xi|\rightarrow 1$. It is straightforward to see that for each constant $c>0$, the intersection of $\{\xi\in \check{X}_{\mathcal{C}}:|\xi|^2_h=c\}$ with a preimage of a geodesic in $\check{D}$ is a special Lagrangian diffeomorphic to a $2$-torus. In particular, a special Lagrangian fibration in $\check{D}$ respect to $(\check{\omega}_{\check{D}},\check{\Omega}_{\check{D}})$ lifts to a special Lagrangian fibration on $\check{X}_{\mathcal{C}}$ with respect to $(\check{\omega}_{\mathcal{C}},\check{\Omega}_{\mathcal{C}})$.

\subsubsection{Generalized Semi-Flat Metrics}\label{sec: generalized sf metric}
Let $X$ be a holomorphic symplectic manifold with an elliptic fibration $\pi: X\rightarrow B$ over a (possibly non-compact) Riemann surface $B$ which is a submersion with a holomorphic section. Denote by $z$ the parameter of the local coordinate of the base $B$. Then the fibration can be identified with 
  \begin{align*}
  	  T^*B/\mathbb{Z}\tau_1(z)dz\oplus \mathbb{Z}\tau_2(z)dz,
  \end{align*} such that the holomorphic section identified with the zero section via the Abel-Jacobi map. Here $\tau_i$ are multi-valued holomorphic function in $z$ such that $\mathbb{C}/\mathbb{Z}\tau_1(z)\oplus \mathbb{Z}\tau_2(z)\cong \pi^{-1}(z)$. Given a holomorphic volume form $\Omega_{sf}=\kappa(z)dx\wedge dz$, where $x$ is the fibre coordinate and $\kappa(z)$ is a holomorphic function. The semi-flat metric is given by 
 \begin{align} \label{semi-flat metric}
 	  \omega_{sf,\epsilon}:=\frac{i}{2}\big(|\kappa|^2W^{-1}dz\wedge d\bar{z}+W(dx+Bdz)\wedge \overline{(dx+Bdz)}\big), 
 \end{align} where 
  \begin{align} \label{special Kahler}
  	W= \frac{\epsilon}{\mbox{Im}(\bar{\tau}_1\tau_2)} , \hspace{4mm}
  	B(z,x)= -\frac{W}{\epsilon}[\mbox{Im}(\tau_2\bar{x})\partial\tau_1+\mbox{Im}(\bar{\tau}_2x)\partial \tau_2] 
  \end{align} and $\epsilon$ is the volume of elliptic fibres. It is easy to see that ${\omega}_{sf}$ restricts to a flat metric on each fibre and one can check directly that $2{\omega}_{sf}^2={\Omega}_{sf}\wedge \bar{{\Omega}}_{sf}$, i.e. ${\omega}_{sf}$ is a hyperK\"ahler metric. By identifying the section with the base $B$, the first term of \eqref{semi-flat metric}, i.e.  $\frac{i}{2}|\kappa|^2W^{-1}dz\wedge d\bar{z}$, induces a K\"ahler form on the base. It is the so-called special K\"ahler metric associate to the elliptic fibration and we denote it by $\omega_{SK}$. Notice that from the explicit form of the special K\"ahler metric and the Abel-Jacobi map, the scaling of the special K\"ahler metric is intrinsically determined by the holomorphic $2$-form $\Omega$. 

Restricting the situation of a germ of elliptic fibration $Y_{mod}\rightarrow \Delta$ with only an $I_d$-singular fibre over $0\in\Delta$. Locally we may choose $\tau_1=1, \tau_2=\frac{d}{2\pi i}\log{z}$. Take $X_{mod}$ to be the preimage of the punctured disc $\Delta^*$. Then from the previous discussion,
\begin{align*}
	X_{mod}\cong  \Delta^*\times \mathbb{C}/\Lambda(u), \mbox{ where } \Lambda(z)=\mathbb{Z}\oplus \mathbb{Z}\frac{d}{2\pi i}\log{z},
\end{align*} where $z$ is the coordinate on the punctured disc $\Delta^*$. Consider the holomorphic volume form ${\Omega}_{sf}=\frac{1}{z}dx\wedge dz$, then the semi-flat metric \eqref{semi-flat metric} becomes
\begin{align}\label{sf metric}
	{\omega}_{sf,\epsilon} &:=  i \frac{d|\log|z||}{2\pi\epsilon}\frac{dz\wedge d\bar{z}}{|z|^2} \notag \\
	&\quad + \frac{i}{2} \frac{2\pi\epsilon}{d|\log|z||} \left(dx+B(z,x)dz\right)\wedge \overline{\left(dx+B(z,x)dz\right)},
\end{align} where $B(z,x) = -\frac{{\rm Im}(x)}{iz|\log|z||}$.  

It was discovered in \cite[Section 2]{CJL2} that for each $b_0\in \mathbb{R}$, there is a {\it generalized semi-flat metric} given by 
\begin{align*}
	{\omega}_{sf, b_0, \epsilon} &:= i W^{-1}\frac{dz\wedge d\bar{z}}{|z|^2}\\
	&\quad + \frac{i}{2} W  \left(dx+\widetilde{\Gamma}(x,u,b_0) dz\right) \wedge \overline{\left(dx+\widetilde{\Gamma}(z,x,b_0) dz\right)},
\end{align*} where $W= \frac{2\pi \epsilon}{d|\log|z||}$ and $
\widetilde{\Gamma}(z,x,b_0) = B(z,x)+ \frac{b_0}{2\pi^2}\frac{|\log|z||}{z}$. 

 Geometrically, the generalized semi-flat metric can be obtained from the standard semi-flat metric by pulling back along the fibrewise translation map coming from a certain multi-section. Consider a multi-section $\sigma: \Delta^*\rightarrow {X}_{mod}$ of the form 
\begin{align*}
	\sigma: &\Delta^*\longrightarrow {X}_{mod}\\
	& z\mapsto \bigg(z,h(z)+ \frac{c_0}{2\pi i}\log{z}+\frac{b_0}{(2\pi i)^2}(\log{z})^2 \bigg)
\end{align*} for $b_0,c_0\in \mathbb{R}$ and $h(z)$ a holomorphic function on $\Delta^*$. Let $T^*_{\sigma}$ be the fibrewise translation by the multi-section $\sigma$ relative to the zero section. Straightforward calculation shows that $T^*_{\sigma}{\omega}_{sf}={\omega}_{sf,b_0,sf}$ is well-defined on ${X}_{mod}$. It is worth noticing that $2b_0/d\in \mathbb{Z}$ if and only if $\sigma$ extends to a section over $\Delta\rightarrow {Y}_{mod}$, \cite[Lemma 3.28]{FM}. We will give a different interpretation in Section \ref{sec: B-field}.

The following theorem proves that the two local models, Calabi ansatz and the generalized semi-flat metric, are exactly related by hyperK\"ahler rotations. 
    \begin{thm}\cite[Appendix A]{CJL2} \label{thm: local model} Assume that $\check{D}\cong \mathbb{C}/\mathbb{Z}\oplus \mathbb{Z}\tau$ is an elliptic curve, with $\tau$ in the upper half-plane. With the notations in Section \ref{sec: Calabi ansatz}, the special Lagrangian corresponding to $1\in \mathbb{Z}\oplus \mathbb{Z}\tau\cong H_1(\check{D},\mathbb{Z})$ is of phase zero in $\check{X}_{\mathcal{C}}$ with respect to K\"ahler form  ${\omega}_{\mathcal{C}}$ and holomorphic volume form ${\Omega}_{\mathcal{C}}$.  Consider the hyperK\"ahler rotation such that the special Lagrangian fibration becomes an elliptic fibration. Then with a suitable choice of coordinates, one has 
	\begin{align*}
		\mbox{Re}\check{\Omega}_{\mathcal{C}}=\alpha{\omega}_{sf,b_0,\epsilon}, \hspace{5mm} \check{\omega}-i\mbox{Im}\check{\Omega}_{\mathcal{C}}=\alpha{\Omega}_{sf},
	\end{align*} 
	where $b_0=-\frac{1}{2}\mbox{Re}(\tau)d$, $\epsilon=\frac{2\sqrt{2}\pi }{\mbox{Im}(\tau)}$ and $\alpha=\sqrt{d\pi \mbox{Im}(\tau)}$. In particular, there exists a bijection between $\tau \leftrightarrow (b_0,\epsilon)$, i.e., every generalized semi-flat metric can be realized as some hyperK\"ahler rotation of certain Calabi ansatz up to scaling. 
\end{thm}
\subsection{$ALH^*$-Gravitational Instantons}\label{sec: ALH* gi}
  \begin{defn} Let $(X,g)$ be a gravitational instanton with K\"ahler form $\omega$ and holomorphic volume form $\Omega$. 
  	Then $(X,g)$ is of type $ALH_d^*$ if there exists a diffeomorphism $F:X^d_{\mathcal{C}}\rightarrow X$ such that for all $k\in \mathbb{N}$, one has 
  	  \begin{align*}
  	  	  \|\nabla^k_{{ \omega}_{\mathcal{C}}} (F^*\boldsymbol{\omega}-\boldsymbol{\check{\omega}}_{\mathcal{C}})\|_{C^0_g}=O(r^{-k-\epsilon}), 
  	  \end{align*} for some $\epsilon>0$, where $\boldsymbol{\check{\omega}}_{\mathcal{C}}=(\check{\omega}_{\mathcal{C}},\check{\Omega}_{\mathcal{C}})$ and $\boldsymbol{\omega}=(\omega,\Omega)$. 
  \end{defn}
 \begin{rk}
 	 Sun-Zhang proved that the above polynomial decay can actually be improved to exponential decay \cite{SZ}.
 \end{rk}
   There are two types of examples of $ALH^*$-gravitational instantons: 
   The first type of examples comes from the foundational work of Tian-Yau \cite{TY}, where they used the Calabi ansatz $(\check{X}_{\mathcal{C}},\check{\omega}_{\mathcal{C}},\check{\Omega}_{\mathcal{C}})$ to construct complete, non-compact K\"ahler, Ricci-flat manifolds. Recall that a weak del Pezzo surface is a compact complex surface with anti-canonical divisor that is big and nef. From the classification of the compact complex surfaces, weak del Pezzo surfaces are either $\mathbb{P}^1\times \mathbb{P}^1$, the Hirzebruch surface $\mathbb{F}_2$ or the blow up of $\mathbb{P}^2$ at generic $9-d$ points for $d=0,\cdots, 8$.  
   Restrict to dimensional two cases of the construction of Tian-Yau, given a weak del Pezzo surface $\check{Y}$ and $\check{D}$ a smooth anti-canonical divisor, the complement $\check{X}=\check{Y}\setminus \check{D}$ admits a holomorphic volume form $\check{\Omega}$ with a simple pole on $\check{D}$, and $\check{\Omega}$ is unique up to $\mathbb{C}^*$-scaling. A neighborhood of $\check{D}\subset \check{Y}$ can be smoothly identified with a neighborhood of the zero section in the normal bundle to $\check{D}$.  Tian-Yau developed a robust package for solving the complex Monge-Amp\`ere equation on non-compact complex manifolds and applied their techniques to construct complete Ricci-flat metrics $\check{\omega}_{TY}$ on $\check{X}=\check{Y}\setminus \check{D}$ asymptotic to the Calabi ansatz. Hein \cite{Hein} later improved and sharpened the Tian-Yau method, obtaining the result that in a Tian-Yau space the pair $(\check{\omega}_{TY},\check{\Omega})$ converges to $((\Phi^{-1})^*\omega_{\mathcal{C}},(\Phi^{-1})^*\Omega_{\mathcal{C}})$ with exponential decay for a suitable map $\Phi:\check{X}_{\mathcal{C}}\rightarrow \check{X}$ which is a diffeomorphism onto its image \cite{HSVZ}. This is helpful later to transfer geometric properties of $\check{X}_{\mathcal{C}}$ to the geometry properties of the end of $\check{X}$.

   The second example is due to Hein \cite{Hein} and generalized in \cite{CJL2}. Let $Y$ be a rational elliptic surface with an $I_d$-fibre $D$. Denote by $X$ the complement of $D$. 
   This generalized semi-flat metric ${\omega}_{sf,b_0,\epsilon}$ can be used as a model metric near ${D}$ to construct complete hyperK\"ahler metrics on ${X}$ following the arguments of Hein \cite{Hein}. Denote by $[F]\in H_2({X},\mathbb{Z})$ the homology class of the fibre. By choosing a local section near ${D}$, we may identify the elliptic fibration in ${X}$ near its end with the local model ${X}_{mod}$. Consider the long exact sequence of pairs $(Y,X)$, 
     \begin{align} \label{eq: les}
     	  0\cong H_3(Y)\rightarrow H^1(D)\rightarrow H_2(X)\rightarrow H_2(Y)\rightarrow \cdots,
     \end{align} and denote by $[C]$ the generator of the image of $H^1(D)$. Motivated by the Torelli theorem of Looijenga pairs, we say a holomorphic volume from $\Omega$ on $X$ is normalized if its pairing with the image of $H^1(D)$ is $1$. 
   \begin{thm} \cite[Theorem 2.16]{CJL2} \label{generalized Hein's metric} 
   	Let ${\omega}_0$ be a K\"ahler form on $X$ such that 
   	\begin{align*}
   		[{\omega}_0]_{dR}\cdot [F]=\epsilon, \hspace{5mm} [{\omega}_0]_{dR} \cdot [C]=\frac{2b_0}{\epsilon},
   	\end{align*} for some $b_0\in \mathbb{R}$. Then there exists a K\"ahler metric ${\omega}_{\alpha}$ on ${X}$ such that 
   	\begin{align*}
   		{\omega}_{\alpha}= \alpha T^*_{h}{\omega}_{sf,b_0,\frac{\epsilon}{\alpha}}
   	\end{align*} near ${D}$ and ${\omega}_{\alpha}={\omega}_0$ outside of a neighborhood of ${D}$. Here $h$ is a holomorphic section over $\Delta^*$ which depends only on the Bott-Chern cohomology class $[{\omega}_0]_{BC} \in H^{1,1}_{BC}(X, \mathbb{R})$. Moreover, for sufficiently large $\alpha$ \footnote{The condition can be viewed as saying the existence holds sufficiently close to  the large volume limit; see the discussion in \cite[Section 5]{CJL2}.}, there exists a hyperK\"ahler metric on ${X}$,
   	\begin{align*}
   		{\omega}={\omega}_{\alpha}
   		+i\partial \bar{\partial} \phi, 
   	\end{align*} such that 
   	\begin{enumerate}
   		\item  ${\omega}^2=\alpha^2{\Omega}\wedge \bar{{\Omega}}$, and
   		\item ${\omega}$ is asymptotic to the generalized semi-flat metric $T^*_h{\omega}$ in the sense that there exists a constant $\delta>0$ such that 
   		\begin{align*}
   			|\nabla^k \phi|\sim O(e^{-\delta r^{2/3}}),
   		\end{align*} for every $k\in \mathbb{N}$, where $r$ is the distance function to a fixed point in ${X}$. 
   	\end{enumerate}
   	In particular, $\check{\omega}$ and $\check{\omega}_{sf,b_0,\epsilon}$ share the same curvature decay $|\nabla^k Rm|\lesssim r^{-2-k}$ for each $k\in \mathbb{N}$ and injectivity radius decay $inj\sim r(x)^{-1/3}$. 
   \end{thm}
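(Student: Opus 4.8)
\emph{Proof strategy.}
The plan is to adapt the Tian--Yau/Hein construction, combining a gluing step with the solution of a complex Monge--Amp\`ere equation on the open surface $X$. \emph{Construction of the background metric.} Starting from $\omega_0$ with $[\omega_0]_{dR}\cdot[F]=\epsilon$ and $[\omega_0]_{dR}\cdot[C]=2b_0/\epsilon$, one compares $\omega_0$ on the end $X_{mod}\subset X$ with the scaled generalized semi-flat metric $\alpha T^*_h\omega_{sf,b_0,\epsilon/\alpha}$. The two period conditions read off $(\epsilon,b_0)$ and force these forms to represent the same class in $H^2(X_{mod};\mathbb{R})$; the finer invariant $[\omega_0]_{BC}\in H^{1,1}_{BC}(X,\mathbb{R})$ then additionally pins down the holomorphic section $h$ over $\Delta^*$, so that $\omega_0-\alpha T^*_h\omega_{sf,b_0,\epsilon/\alpha}=i\partial\bar\partial\psi$ near $D$ with $\psi$ bounded. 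Cutting $\psi$ off produces a K\"ahler form $\omega_\alpha$ agreeing with $\omega_0$ away from $D$ and with $\alpha T^*_h\omega_{sf,b_0,\epsilon/\alpha}$ near $D$; the hypothesis $\alpha\gg1$ is what makes $\omega_\alpha$ stay positive after the cutoff (the term $\alpha\omega_{sf}$ dominating $i\partial\bar\partial(\chi\psi)$) and what later makes the perturbation estimates close.

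\emph{The Monge--Amp\`ere equation.} One must solve $(\omega_\alpha+i\partial\bar\partial\phi)^2=\alpha^2\,\Omega\wedge\bar\Omega=:e^f\omega_\alpha^2$. Since the generalized semi-flat metric already satisfies $2\omega_{sf}^2=\Omega_{sf}\wedge\bar\Omega_{sf}$, the function $f$ is supported in the compact interpolation region. Existence of a solution $\phi$ is then exactly the content of Hein's existence theorem for the complex Monge--Amp\`ere equation over complete non-compact K\"ahler surfaces whose end is modeled on the Calabi ansatz: the inputs are the volume growth $\mathrm{Vol}(B_r)\sim r^{4/3}$, the curvature decay $|Rm|\lesssim r^{-2}$, and the weighted Sobolev inequality for this geometry, and the solution is produced by the continuity method (Moser iteration against the Sobolev inequality for the $C^0$ estimate, the Aubin--Yau inequality for the $C^2$ estimate using the bounded geometry of the model after rescaling by the collapsing injectivity radius, and Evans--Krylov together with weighted Schauder estimates for higher regularity). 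Because $\Omega$ is holomorphic, $\omega^2=\alpha^2\Omega\wedge\bar\Omega$ makes $(\omega,\mathrm{Re}\,\Omega,\mathrm{Im}\,\Omega)$ a hyperK\"ahler triple up to normalization, which is (1).

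\emph{Exponential decay.} Outside the support of $f$, $\phi$ solves a uniformly elliptic equation that, in the semi-flat coordinates $(z,x)$ on $X_{mod}$, decouples into the flat $T^2$-fibre directions and the base variable $z\in\Delta^*$. Expanding along the $T^2$-fibres into Fourier modes and using the explicit form of the semi-flat Laplacian, every mode is controlled by a fixed power $|z|^\delta=e^{-\delta|\log|z||}$; since $r\sim|\log|z||^{3/2}$ in this end, this is $O(e^{-\delta r^{2/3}})$. Differentiating the equation and bootstrapping in weighted H\"older norms propagates the same decay to all $\nabla^k\phi$, which is (2). The asserted curvature decay $|\nabla^k Rm|\lesssim r^{-2-k}$ and injectivity-radius decay $\mathrm{inj}\sim r^{-1/3}$ then follow from the $C^\infty$-exponential proximity of $\omega$ to the translated generalized semi-flat metric, together with the model asymptotics \eqref{eq: geomTY} (equivalently, those of the semi-flat metric obtained from it by hyperK\"ahler rotation).

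I expect the main obstacle to be the analysis underlying Hein's existence theorem: because the injectivity radius degenerates like $r^{-1/3}$ at infinity, the end is not of bounded geometry, so the usual interior elliptic estimates must be replaced by estimates in weighted norms that track the collapsing scale, and the a priori $C^0$-bound on $\phi$ over a non-compact space of sub-Euclidean volume growth requires the sharp Sobolev/Nash-type inequality adapted to this geometry. Once that analytic package is in place, the gluing construction of $\omega_\alpha$ and the Fourier-mode decay argument are comparatively formal.
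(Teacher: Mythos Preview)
The paper does not prove this theorem; it is quoted verbatim as \cite[Theorem 2.16]{CJL2} and used as input for the later analysis. So there is no ``paper's own proof'' to compare against. That said, your outline is an accurate reconstruction of the strategy actually used in \cite{CJL2} (which in turn adapts \cite{Hein,TY}): build the background $\omega_\alpha$ by matching $[\omega_0]$ to $\alpha T_h^*\omega_{sf,b_0,\epsilon/\alpha}$ on the end via the period conditions and the Bott--Chern refinement, then invoke Hein's non-compact Monge--Amp\`ere package to solve for $\phi$, and finally upgrade the solution to exponential decay on the end.

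One remark on the decay step: your Fourier-mode picture is morally right but not quite the mechanism used. In \cite{Hein} (and reproduced in spirit in Section~\ref{sec: a priori} of the present paper) the exponential decay $|\nabla^k\phi|=O(e^{-\delta r^{2/3}})$ is obtained by an iteration over annuli exploiting the $CYL(1/3)$ structure of the end (Definition~\ref{CYL}): a weighted Poincar\'e inequality on geodesic annuli of scale $r^{1/3}$ combined with the integration-by-parts identity for $(\omega^2-\omega_\alpha^2)$ yields geometric decay of $\int|\nabla\phi|^2$ along a sequence of radii, and Moser iteration plus Schauder then pointwise. This is more robust than a literal Fourier expansion because the fibre metric is not constant along the base and the semi-flat Laplacian does not separate cleanly; your ``Fourier'' heuristic would need to be reformulated as this annular iteration to become a proof. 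Otherwise the proposal is sound.
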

   
   Collins, Jacob and the first author proved that the special Lagrangian fibrations in the model $X_{\mathcal{C}}$ (see Section \ref{sec: Calabi ansatz}) can be deformed to a genuine special Lagrangian fibration near the end in $\check{X}$ with respect to the hyperK\"ahler metric \cite{CJL}. Then a suitable hyperK\"ahler rotation ${X}$ admits an elliptic fibration near the end with monodromy conjugate to $\begin{pmatrix} 1 & d \\ 0 & 1\end{pmatrix}$. By pulling back the universal family over the compactified moduli space of elliptic curves $\bar{M}_{1,1}$, one can compactify ${X}$ to a compact complex surface ${Y}$ with the help of Enrique-Kodaira's classification. Therefore, we will call $\check{X}$ with the hyperK\"ahler triple an $ALH^*_d$-gravitational instanton and one reached the following uniformization result for $ALH^*$-gravitational instantons:
  \begin{thm} \cite{CJL3}\cite{HSVZ2}\label{uniformization}
  	  Any $ALH_b^*$-gravitational instanton $(X,\boldsymbol{\omega})$ admits a suitable\footnote{There are actually infinitely many such hyperK\"ahler rotation, each one corresponds to a primitive element in $\mathbb{Z}^2$.} hyperK\"ahler rotation ${X}$ which can be compactified to a rational elliptic surface ${Y}$ by adding an $I_d$-fibre at its end. Moreover, the hyperK\"ahler metric on ${X}$ is asymptotic to a generalized semi-flat metric with exponential decay. 
  \end{thm}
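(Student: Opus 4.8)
The plan is to synthesize three known ingredients: the complex-analytic uniformization of $ALH^*$-gravitational instantons, the construction of a holomorphic elliptic fibration as the hyperK\"ahler rotation of a special Lagrangian fibration, and a comparison of the two local models near infinity. First I would put the instanton into complex-geometric standard form. By the uniformization theorem for $ALH^*$-gravitational instantons \cite{HSVZ2}, the manifold $X$ with the complex structure underlying the given hyperK\"ahler triple $\boldsymbol\omega=(\omega,\Omega)$ is biholomorphic to $\check Y\setminus\check D$, where $\check Y$ is a weak del Pezzo surface of degree $d$ and $\check D\in|-K_{\check Y}|$ is smooth; and since an $ALH^*$ metric is by definition asymptotic to the Calabi ansatz, with exponential rate in $r^{2/3}$ by Sun--Zhang \cite{SZ}, the metric is a complete Ricci-flat K\"ahler metric on $\check Y\setminus\check D$ asymptotic to the Calabi ansatz, i.e.\ a Tian--Yau metric. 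This places us exactly in the hypotheses of Theorem \ref{theorem:CJL}.

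Applying that theorem, a choice of primitive $\alpha\in H_1(\check D,\mathbb Z)$ produces a special Lagrangian $T^2$-fibration of $\check X=\check Y\setminus\check D$ over $\mathbb R^2$ with fibre class $\alpha$; near infinity this fibration is the implicit-function-theorem perturbation, on exponentially weighted H\"older spaces, of the explicit flat tori of the Calabi ansatz obtained by lifting closed geodesics of $\check D$, the error terms being governed by the exponentially small defect $\boldsymbol\omega-\boldsymbol{\check\omega}_{\mathcal C}$. HyperK\"ahler rotating by $\omega'=\mathrm{Re}\,\check\Omega$, $\Omega'=\check\omega-i\,\mathrm{Im}\,\check\Omega$ turns this into a holomorphic elliptic fibration of $X'$ over $\mathbb C$, whose monodromy at infinity is conjugate to $\begin{pmatrix}1&d\\0&1\end{pmatrix}$, the integer $d$ being the Chern number of the line bundle underlying the Calabi ansatz (equivalently $\check D\cdot\check D$). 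The fibrewise $j$-invariant is holomorphic on $\mathbb C$ and grows like $e^{\,r^{2/3}}$ near infinity, hence extends to a meromorphic map $\mathbf P^1\to\overline M_{1,1}$; pulling back the universal elliptic curve fills the puncture with an $I_d$-fibre $D$ and compactifies $X'$ to a compact complex surface $Y$ with $X'=Y\setminus D$. Since $Y$ is an elliptic surface over $\mathbf P^1$ with $b_1(Y)=0$, Euler number $12$, and $-K_Y$ linearly equivalent to the fibre class $D$, the Enriques--Kodaira classification identifies $Y$ as a rational elliptic surface; this is the first assertion.

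For the metric asymptotics I would route the end geometry through the explicit hyperK\"ahler rotation of the two models in Theorem \ref{local model}: writing $\check D\cong\mathbb C/(\mathbb Z\oplus\mathbb Z\tau)$, one has $\check\omega_{\mathcal C}=\alpha\,\omega_{sf,b_0,\epsilon}$ and $\check\Omega_{\mathcal C}=\alpha\,\Omega_{sf}$ with $b_0=-\tfrac12 d\,\mathrm{Re}(\tau)$, $\epsilon=2\sqrt2\,\pi/\mathrm{Im}(\tau)$ and $\alpha=\sqrt{d\pi\,\mathrm{Im}(\tau)}$. Composing this identification with the exponential closeness of the instanton's hyperK\"ahler triple to $\boldsymbol{\check\omega}_{\mathcal C}$ — promoted from $C^0$ to every $C^k$ either by interior elliptic estimates for the complex Monge--Amp\`ere equation or directly by Hein's weighted Schauder estimates \cite{Hein} — shows that, after the rotation above, the metric on $X$ is asymptotic with exponential rate in $r^{2/3}$ to a generalized semi-flat metric; this is the second assertion, and it amounts to \cite[Theorem 2.16]{CJL2} once $X'$ is known to embed into a rational elliptic surface.

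I expect the main obstacle to be the first step, the complex-geometric uniformization: from the $ALH^*$ asymptotics alone one must prove that $X$ is quasi-projective and that its compactification is a weak del Pezzo surface with smooth anti-canonical boundary, which requires producing enough holomorphic sections on the non-compact manifold and a precise analysis of the asymptotic complex structure on the end in order to exclude competing compactifications. Controlling the \emph{global} elliptic fibration after rotation — that its base is $\mathbf P^1$ with a single $I_d$-type puncture and no stray singular fibres, and that the period map extends holomorphically across the puncture — is of the same nature. Everything after that is bookkeeping with Kodaira's table of singular fibres and the explicit model computations of Theorem \ref{local model}.
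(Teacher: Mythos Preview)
Your proposal is logically sound, but it routes through a heavier input than the paper's own argument. The paper does not prove Theorem~\ref{uniformization} in detail --- it is quoted from \cite{CJL3,HSVZ2} --- but the paragraph immediately preceding it sketches the intended outline, and that outline differs from yours in one essential respect. The paper works \emph{directly} from the $ALH^*$ asymptotics: by definition the end of $X$ is modeled on the Calabi ansatz $\check X_{\mathcal C}$, and the perturbation technique of \cite{CJL} deforms the explicit model special Lagrangian tori of $\check X_{\mathcal C}$ into a genuine special Lagrangian fibration \emph{only in a neighborhood of infinity} of $X$. After hyperK\"ahler rotation this is an elliptic fibration on the end with monodromy conjugate to $\begin{pmatrix}1&d\\0&1\end{pmatrix}$; one compactifies by adding the $I_d$-fibre via the period map to $\bar M_{1,1}$ and then invokes the Enriques--Kodaira classification to identify the resulting compact surface as a rational elliptic surface. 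No prior knowledge that $X$ compactifies to a weak del Pezzo is used.

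Your route instead takes Theorem~\ref{uniformization theorem} (the weak del Pezzo compactification of \cite{HSVZ2}) as a black box, which lands you in the global hypotheses of Theorem~\ref{theorem:CJL} and yields a special Lagrangian fibration over all of $\mathbb R^2$ rather than just over the end. This is cleaner once granted, and the metric asymptotics via Theorem~\ref{local model} then go through exactly as you describe. But you have correctly priced the trade: the weak del Pezzo uniformization is itself a substantial theorem --- producing enough holomorphic sections on a non-compact manifold and pinning down the compactification --- and the paper's sketch deliberately sidesteps it. What you flag as ``the main obstacle'' is simply absent from the paper's approach; it is exchanged for the lighter task of running the surface classification on the compactified $Y$ once the $I_d$-fibre has been glued in.
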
 
  It is also natural to expect another uniformization theorem of $ALH^*$-gravitational instantons:
  \begin{thm} \cite{HSVZ2} \cite{CL} \label{uniformization theorem}
  	 Any $ALH_b^*$ can be compactified to a weak del Pezzo surface of degree $d$ by adding a smooth elliptic curve at its end. 
  \end{thm}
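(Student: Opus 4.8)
The plan is to compactify the given $ALH^*_d$-gravitational instanton $(X,\omega,\Omega)$ by adjoining a single divisor at infinity, after recognizing that near infinity $X$ already carries the complex structure of the Calabi model. Recall that the model $X^d_{\mathcal C}$ is the complement of the zero section in a disc subbundle of a degree-$d$ positive line bundle $p\colon E\to\check D$ over an elliptic curve $\check D$, with model holomorphic volume form $\check\Omega_{\mathcal C}=p^*\Omega_{\check D}\wedge\frac{dw}{w}$. By definition of $ALH^*_d$, together with the Sun--Zhang exponential decay \cite{SZ}, there is a diffeomorphism $F\colon X^d_{\mathcal C}\to X$ onto a neighborhood of infinity such that $F^*(\omega,\Omega)-(\check\omega_{\mathcal C},\check\Omega_{\mathcal C})$ and all its covariant derivatives are $O(e^{-\delta r^{2/3}})$. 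The first step is to promote this to a statement about complex structures: the complex structure $J$ of $X$, restricted to the end, satisfies $\|F^*J-J_{\mathcal C}\|=O(e^{-\delta r^{2/3}})$ in all derivatives, where $J_{\mathcal C}$ is the model complex structure. This is because a complex structure compatible with a fixed Riemannian metric is pinned down by the choice of a unit-norm $(2,0)$-form, and (after normalizing) $F^*\Omega$ is exponentially close to $\check\Omega_{\mathcal C}$ while the two metrics are exponentially close.

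The analytic heart of the argument, and the step I expect to be the main obstacle, is to upgrade this ``exponentially close at infinity'' statement to an honest biholomorphism of the end onto a neighborhood of the zero section. Pulling back along $F$ the model holomorphic data — the line-bundle fibre coordinate $w$ together with holomorphic coordinates on charts covering $\check D$ — produces functions on the end that are $\bar\partial_J$-holomorphic up to an error of size $O(e^{-\delta r^{2/3}})$. One then solves the $\bar\partial_J$-equation on the end of $X$ in exponentially weighted Hölder spaces to correct these to genuine $J$-holomorphic functions; this is possible because on the $ALH^*$ end the relevant Laplacian is an isomorphism on such weighted spaces, which is part of the linear analysis underlying Hein's construction \cite{Hein} (Theorem \ref{generalized Hein's metric}) and the Tian--Yau package \cite{TY}, and the minimal-norm solutions patch consistently over $\check D$. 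Since the corrections are exponentially small and $F$ is a diffeomorphism onto the model end, the corrected functions assemble into a holomorphic map $\Psi$ which is a biholomorphism from the end of $X$ onto a punctured neighborhood $U\setminus\check D$ of the zero section $\check D\subset E$, so that adjoining $\check D$ completes the end of $X$. (An alternative would be to combine Theorem \ref{uniformization}, which realizes a hyperK\"ahler rotation of $X$ as a rational elliptic surface minus an $I_d$-fibre whose metric is semi-flat at infinity up to exponential error, with the model identification of Theorem \ref{local model}; but this still leaves exactly the same ``exponential error $\Rightarrow$ biholomorphism'' analysis to be carried out, so it does not shorten the proof.)

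Granting $\Psi$, the compactification is formal. Define $\check Y$ by gluing the compact manifold-with-boundary obtained from $X$ by deleting a small neighborhood of infinity to the full disc-bundle neighborhood $U$ of the zero section, along $\Psi$. Since $\Psi$ is a biholomorphism of open sets and $E$ is smooth along its zero section, $\check Y$ is a smooth compact complex surface, and $\check D:=\check Y\setminus X$ is a smoothly embedded elliptic curve with $\check Y\setminus\check D=X$. The form $\Omega$ is a nowhere-vanishing holomorphic $2$-form on $X=\check Y\setminus\check D$ which, in the model chart, equals $p^*\Omega_{\check D}\wedge\frac{dw}{w}$ and hence has a simple pole along $\check D$; therefore $K_{\check Y}\cong\mathcal O_{\check Y}(-\check D)$, so $\check D\in|-K_{\check Y}|$ is a smooth anticanonical divisor.

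It remains to verify that $\check Y$ is a weak del Pezzo surface of degree $d$. In the model chart the normal bundle of $\check D$ in $\check Y$ is the restriction of $E$ to its zero section, so $\check D^2=\deg N_{\check D/\check Y}=\deg E=d>0$, whence $(-K_{\check Y})^2=\check D^2=d$. For nefness of $-K_{\check Y}=\mathcal O_{\check Y}(\check D)$: if $C\subset\check Y$ is an irreducible curve distinct from $\check D$ then $\check D\cdot C\ge 0$ because $\check D$ is effective and meets $C$ properly, while $\check D\cdot\check D=d>0$; hence $-K_{\check Y}$ is nef, and combined with $(-K_{\check Y})^2=d>0$ the Riemann--Roch asymptotics on a compact complex surface show it is nef and big. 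By the definition of weak del Pezzo surface recalled above, $\check Y$ is then a weak del Pezzo surface of degree $d$, and by the classification quoted earlier it is $\mathbb{P}^1\times\mathbb{P}^1$, $\mathbb{F}_2$, or the blow-up of $\mathbb{P}^2$ at $9-d$ points. This would complete the proof.
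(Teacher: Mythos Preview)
The paper does not supply its own proof of this statement; Theorem~\ref{uniformization theorem} is quoted with citations to \cite{HSVZ2} and \cite{CL}, so there is no in-paper argument to compare against. Your outline is in the right spirit: identify the end as exponentially close to the Calabi model, upgrade this to a genuine biholomorphism onto a punctured neighborhood of the zero section, glue in the divisor, and read off the weak del Pezzo properties.

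The step you flag as the ``analytic heart'' is where essentially all the content lies, and your justification for it is too thin to count as a proof. The linear analysis in \cite{Hein} and \cite{TY} is for the scalar Laplacian in the Monge--Amp\`ere problem on a fixed complex manifold; it is not a package for solving $\bar\partial$ on an $ALH^*$ end and patching local holomorphic corrections over $\check D$ into a global biholomorphism. One has to set up weighted spaces adapted to the Calabi geometry, establish the relevant mapping properties for $\bar\partial$ (or for a deformation operator on complex structures) there, and check that the corrections are small enough that the corrected map remains a diffeomorphism and that the local pieces agree on overlaps. None of this follows from the results you cite; it is precisely the substance of \cite{HSVZ2}. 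Your parenthetical alternative via Theorem~\ref{uniformization} and Theorem~\ref{local model} runs into the same gap, as you acknowledge. A smaller omission: your weak del Pezzo verification and the final appeal to the classification tacitly assume $\check Y$ is projective (equivalently K\"ahler, for surfaces), which must be checked separately, e.g.\ by extending a K\"ahler class across $\check D$ or via Enriques--Kodaira once the complex compactification is in hand.
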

  It is well-known that there are ten deformation families of rational elliptic surfaces together with an $I_d$-fibre, one for each $1\leq b\leq 9, d\neq 8$ and two for $d=8$. As a sanity check, there is a dual statement: there are ten deformation families of weak del Pezzo surfaces (with a smooth anti-canonical divisor) of degree $d$, again one for each $1\leq d\leq 9, d\neq 8$ and two for $d=8$. Thus, a direct consequence of the above uniformization theorems is that there are exactly ten diffeomorphism types of $ALH^*_d$-gravitational instantons and $1\leq d\leq 9$. 
  The uniformization leads to the following definition which compares the cohomology classes of hyperK\"ahler triples on $ALH^*$-gravitational instantons. 
  \begin{defn} \label{def: marked ALH*} 
  	 Fix a diffeomorphism type of $ALH^*$-gravitational instantons and a reference $ALH^*$-gravitational instanton $(X_0,\boldsymbol{\omega}_0)$ with this diffeomorphism type, a marked $ALH^*$-gravitational instanton $(X,\boldsymbol{\omega},\alpha)$ is an $ALH^*$-gravitational instanton $(X,\boldsymbol{\omega})$ with a diffeomorphism $\alpha: X_0\rightarrow X$. 
  \end{defn}
    Then it is natural to ask if the cohomology classes of the hyperK\"ahler triples determine the hyperK\"ahler triple of the $ALH^*$-gravitational instantons and what cohomology classes can be realized as the hyperK\"ahler triple of the $ALH^*$-gravitational instantons for each diffeomorphism type.
   The former uniformization theorem together with a uniqueness theorem of complex Monge-Ampere equation (see \cite[Proposition 4.6]{CJL2}) and a Torelli type theorem for log Calabi-Yau surfaces \cite{GHK} lead to the Torelli type theorem of $ALH^*$-gravitational instantons. 
   \begin{thm}\cite{CJL3}\label{torelli}
   	  Let $(X,\boldsymbol{\omega}), (X',\boldsymbol{\omega}')$ be two $ALH^*$-gravitational instantons of the same diffeomorphism type and $F:X\rightarrow X'$ is a diffeomorphism such that $F^*[\boldsymbol{\omega}']=[\boldsymbol{\omega}]$, then there exists a diffeomorphism $f:X\rightarrow X'$ such that $f^*\boldsymbol{\omega}'=\boldsymbol{\omega}$. 
   	\end{thm}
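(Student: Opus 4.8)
The plan is to reduce the statement to the Torelli theorem for log Calabi-Yau surfaces of \cite{GHK}, applied to a cyclic-boundary compactification, together with the uniqueness of Ricci-flat K\"ahler metrics in a fixed K\"ahler class. First I would write $\boldsymbol{\omega}=(\omega,\Omega)$ and let $I$ be the underlying complex structure on $X$, and similarly for $X'$; then, choosing a primitive class in $H_1$ of the end of $X$ and its $F$-image on $X'$, I would perform the hyperK\"ahler rotations of Theorem \ref{uniformization} on both sides simultaneously. Such a rotation is a fixed real-linear operation on the triple $(\omega,\mathrm{Re}\,\Omega,\mathrm{Im}\,\Omega)$, so the hypothesis $F^*[\boldsymbol{\omega}']=[\boldsymbol{\omega}]$ is preserved; after rotation, $X=Y\setminus D$ and $X'=Y'\setminus D'$ with $Y,Y'$ rational elliptic surfaces and $D=D'=I_d$ the fibres at infinity, so both are Looijenga pairs with maximal (cyclic) boundary. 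Since $F$ is a diffeomorphism between $ALH_d^*$-instantons of the same type, it identifies the two ends topologically, hence (through the exact sequence \eqref{eq: les}) carries the boundary lattice data of $(Y',D')$ to that of $(Y,D)$; in particular $F^*$ restricts to an isometry $\psi$ of the associated Looijenga lattices taking boundary classes to boundary classes.

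Next I would match the complex structures. The complex structure of the Looijenga pair $(Y,D)$ is encoded by its period point, which is determined by the class $[\Omega]$ together with the normalization that $\Omega$ pairs to $1$ against the image of $H^1(D)$ in $H_2(X)$ (see \eqref{eq: les}). The $[\Omega]$-part of the hypothesis says exactly that $\psi$ matches the period points of $(Y',D')$ and $(Y,D)$, while the $[\omega]$-part forces $F^*$ to send K\"ahler classes to K\"ahler classes, which together with its preservation of the intersection form and of the boundary shows $\psi$ is admissible. By the Torelli theorem for log Calabi-Yau surfaces \cite{GHK} there is then an isomorphism of pairs $g\colon(Y,D)\to(Y',D')$ realizing $\psi$ on cohomology. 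Restricting to the complements, $g\colon X\to X'$ is a biholomorphism with $g^*[\omega']=F^*[\omega']=[\omega]$; moreover $g^*\Omega'$ is again a holomorphic volume form on $X$ with a simple pole along $D$, hence a scalar multiple of $\Omega$, and the scalar equals $1$ because $g_*$ preserves the image of $H^1(D)$, so that $g^*\Omega'$ is still normalized. Thus $g^*\Omega'=\Omega$.

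It then remains to match the K\"ahler forms. At this point $g^*\omega'$ and $\omega$ are two hyperK\"ahler, hence Ricci-flat K\"ahler, metrics for the complex structure $I$ on $X$, lying in the same K\"ahler class $[\omega]$ and having the same pointwise volume form (a fixed universal constant times $\Omega\wedge\overline{\Omega}$, using $g^*\Omega'=\Omega$). Both are $ALH^*$ and asymptotic with exponential decay to the generalized semi-flat model with the same asymptotic parameters --- these being determined by the matched cohomology class and the boundary elliptic curve, cf.\ Theorem \ref{generalized Hein's metric} --- so the potential $\phi$ with $g^*\omega'=\omega+\sqrt{-1}\partial\overline{\partial}\phi$ decays at infinity. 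I would then invoke the uniqueness theorem for the complex Monge--Amp\`ere equation on $ALH^*$-spaces (\cite[Proposition 4.6]{CJL2}) to conclude that $\phi$ is constant, i.e.\ $g^*\omega'=\omega$. Combined with $g^*\Omega'=\Omega$, this gives $g^*(\omega',\Omega')=(\omega,\Omega)$ for the rotated triples, and undoing the ($F$-compatible, real-linear) hyperK\"ahler rotation yields $g^*\boldsymbol{\omega}'=\boldsymbol{\omega}$; I would take $f=g$.

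The hard part will be the middle step: converting the bare cohomological equality $F^*[\boldsymbol{\omega}']=[\boldsymbol{\omega}]$ into precisely the data that \cite{GHK} requires --- an equality of period points together with an \emph{admissible} lattice isometry, i.e.\ one compatible with effectivity of curve classes and with the chamber structure cut out by the boundary --- and then upgrading the abstract isomorphism produced by \cite{GHK} to one inducing $F^*$ on all of $H^2$, with the holomorphic volume forms matched on the nose and not merely up to a scalar. A secondary difficulty is the bookkeeping in the first step: the hyperK\"ahler rotations, the compactifications, and the boundary identifications on the $X$- and $X'$-sides must be chosen compatibly through $F$, so that the diffeomorphism produced at the end pulls back $\boldsymbol{\omega}'$ itself and not merely a rotated copy of it.
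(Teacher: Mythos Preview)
Your proposal is correct and follows precisely the route the paper indicates: the paper does not give a self-contained proof of Theorem~\ref{torelli} but states that it follows from the uniformization theorem (Theorem~\ref{uniformization}), the Torelli theorem for log Calabi--Yau surfaces \cite{GHK}, and the uniqueness theorem for the complex Monge--Amp\`ere equation \cite[Proposition~4.6]{CJL2}, which is exactly the three-step reduction you carry out. Your identification of the delicate points --- verifying admissibility of the lattice isometry for \cite{GHK} and keeping the hyperK\"ahler rotations on the two sides $F$-compatible --- is also accurate.
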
 
   The latter uniformization theorem (Theorem \ref{uniformization theorem}) is used to prove the surjectivity of the period map:
   \begin{thm}\label{surj period}\cite{LL}
   	   Fix a reference underlying space $X$ of an $ALH^*$-gravitational instanton and a triple of cohomology classes $[\boldsymbol{\omega}]\in H^2(X,\mathbb{R})^{\oplus 3}$ which
   	   \begin{enumerate}
   	   	\item does not simultaneously vanish on $(-2)$-classes and 
   	   	\item $[\omega]$ vanishes on the image of $H^1(D)$ in \eqref{eq: les}.
   	   \end{enumerate}
   	    Then there exist an $ALH^*$-gravitational instanton $(X',\boldsymbol{\omega'})$ and a diffeomorphism $f:X\rightarrow X'$ such that $f^*[\boldsymbol{\omega}']=[\boldsymbol{\omega}]$. 
   \end{thm}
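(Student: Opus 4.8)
\emph{Overview.} The plan is to run the standard three-step proof of surjectivity of a period map in the hyperK\"ahler setting, adapted to the non-compact $ALH^*$ situation: first rotate the twistor sphere so the problem becomes prescribing a complex structure together with a K\"ahler class; second, produce the complex structure via a Torelli-type theorem for Looijenga pairs; third, solve a Tian--Yau-type complex Monge--Amp\`ere equation to promote this algebraic data to a Ricci-flat metric realizing the prescribed classes. Write $[\boldsymbol\omega]=([\omega_1],[\omega_2],[\omega_3])$. Using that the span of the three classes is a positive-definite $3$-plane for the intersection form---which is part of, or a consequence of, admissibility---and is preserved by the $SO(3)$-action on that plane, it suffices to construct an $ALH^*_d$-gravitational instanton $(X',\boldsymbol\omega')$ whose complex structure $J$ (one chosen point of the twistor $\mathbf P^1$) has holomorphic volume form of period $[\omega_2]+i[\omega_3]$ and whose K\"ahler class is $[\omega_1]$.

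\emph{Producing the complex structure.} By Theorem~\ref{uniformization theorem} the underlying smooth $4$-manifold of an $ALH^*_d$-gravitational instanton is $Y\setminus D$ with $Y$ a weak del Pezzo surface of degree $d$ and $D$ a smooth anti-canonical elliptic curve, i.e., a Looijenga pair; the datum $[\omega_2]+i[\omega_3]$, together with its ``periods'' along $H^1(D)$, determines a point of the period domain for such pairs. By the Torelli theorem and surjectivity of the period map for Looijenga pairs of Gross--Hacking--Keel \cite{GHK}, there is a Looijenga pair $(Y',D')$ of the right deformation type realizing this period. Hypothesis~(1)---non-vanishing of the triple on any $(-2)$-class---is exactly what keeps the period off the walls dual to $(-2)$-classes, so that $-K_{Y'}$ remains nef and $D'$ remains smooth rather than nodal; geometrically it is the condition that excludes ALE-type bubbling. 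Hypothesis~(2)---that $[\omega_1]$ vanishes on the image of $H^1(D)$ in \eqref{eq: les}---is the normalization against the boundary: it says $[\omega_1]$ restricts trivially to the special Lagrangian tori at infinity (cf.\ Section~\ref{sec: Calabi ansatz}), equivalently that $[\omega_1]$ is, up to the Thom class of $D'$, the restriction of a class on $Y'$, which is what makes it meaningful to demand that $[\omega_1]$ be K\"ahler on the open surface $X'=Y'\setminus D'$; any residual ``$b_0$-type'' twisting is absorbed into the Hodge datum of this step (compare Theorem~\ref{local model}, where $\tau\leftrightarrow(b_0,\epsilon)$).

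\emph{Solving the Monge--Amp\`ere equation.} On $X'=Y'\setminus D'$, after a Weyl reflection if necessary (which also adjusts $Y'$ within its deformation type), hypothesis~(1) forces $[\omega_1]$ into the K\"ahler chamber, so it is represented by a K\"ahler form $\omega_0$; using the identification of a neighborhood of $D'$ with the normal bundle, $\omega_0$ can be chosen asymptotic to the Calabi ansatz. Normalize the holomorphic volume form $\Omega'$ on $X'$ so that $[\Omega']=[\omega_2]+i[\omega_3]$. The Tian--Yau construction \cite{TY}, with the sharp weighted a priori estimates of Hein \cite{Hein}, solves $(\omega_0+i\partial\overline\partial\varphi)^2=c\,\Omega'\wedge\overline{\Omega'}$ on the complete non-compact $X'$; then $\omega:=\omega_0+i\partial\overline\partial\varphi$ is Ricci-flat, still lies in $[\omega_1]$ (the $i\partial\overline\partial$-term does not move the class), and is exponentially asymptotic to $\check{\omega}_{\mathcal C}$, so $(X',\boldsymbol\omega')$ with $\boldsymbol\omega'=(\omega,\Omega')$ is an $ALH^*_d$-gravitational instanton realizing the prescribed classes. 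Since there is a single diffeomorphism type of $ALH^*_d$-gravitational instanton, a diffeomorphism $f\colon X\to X'$ matched with the markings gives $f^*[\boldsymbol\omega']=[\boldsymbol\omega]$.

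\emph{Main obstacle.} The heart of the matter is the middle step: converting abstract cohomological input into a complex structure on the \emph{open} surface while correctly reading off, from the period, both the nefness of $-K_{Y'}$ and the smoothness of $D'$, and pinning down the deformation type. This forces one to combine the Looijenga-pair Torelli theorem with a chamber analysis of the $(-2)$-classes and the hyperK\"ahler rotation dictionary of Theorems~\ref{local model}, \ref{uniformization}, and \ref{uniformization theorem}. By comparison the analytic step is the now-standard Tian--Yau/Hein package, and the initial twistor reduction is formal once the positive $3$-plane property is in hand.
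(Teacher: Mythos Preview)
The paper does not prove this theorem; it is quoted from \cite{LL}, with only the one-line hint that ``the latter uniformization theorem (Theorem~\ref{uniformization theorem}) is used to prove the surjectivity of the period map.'' So there is no in-paper argument to compare against beyond that hint.

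Your three-step outline---twistor reduction, realize the complex structure via a surjectivity-of-periods theorem for pairs, then solve a Tian--Yau/Hein Monge--Amp\`ere equation---is the right shape and presumably matches the strategy of \cite{LL}. But the middle step contains a genuine confusion, not merely of terminology. A \emph{Looijenga pair} is a rational surface together with an anti-canonical \emph{cycle} of rational curves (e.g.\ a rational elliptic surface with an $I_d$ fibre), not a weak del Pezzo with a \emph{smooth} anti-canonical elliptic curve; and the Gross--Hacking--Keel theorem \cite{GHK} you invoke is a Torelli theorem for the former, not the latter. In this paper \cite{GHK} is used on the rational-elliptic-surface side (Theorem~\ref{uniformization}, and the proofs of Theorem~\ref{torelli} and Lemma~\ref{lem twist}), whereas the hint above points to the del Pezzo side (Theorem~\ref{uniformization theorem}). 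You have spliced the two compactifications: you cite Theorem~\ref{uniformization theorem} for the underlying manifold, then \cite{GHK}---which lives on the other side---for surjectivity of periods, then Tian--Yau (del Pezzo side again) for the metric. To make the argument go through you must stay on one side: either use the del Pezzo compactification together with the surjectivity of the period map for pairs $(\check Y,\check D)$ with $\check D$ smooth (this is what the paper's hint indicates and is the content of \cite{LL}), or work entirely on the rational-elliptic-surface side, apply \cite{GHK} to the genuine Looijenga pair $(Y,D)$ with $D$ an $I_d$ cycle, and replace the Tian--Yau step by Theorem~\ref{generalized Hein's metric}.
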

  Theorem \ref{torelli} and Theorem \ref{surj period} together characterize the moduli space of the marked $ALH^*$-gravitational instantons. Since we can only talk about convergence with respect to pointed Gromov-Hausdorff topology for non-compact spaces, we say a pointed $ALH^*$-gravitational instanton is a pair $((X,\boldsymbol{\omega},p)$, where $(X,\boldsymbol{\omega})$ is an $ALH^*$-gravitational instanton and $p\in X$ is a point. Two pointed $ALH^*$-gravitational instantons $((X,\boldsymbol{\omega},p),((X',\boldsymbol{\omega}',p')$ are isomorphic if there exists a diffeomorphism $f:x\rightarrow X'$ such that $f^*\boldsymbol{\omega}'=\boldsymbol{\omega}$ and $f(p)=p'$. Then the moduli space of pointed $ALH^*$-gravitational instantons is the set of pointed $ALH^*$-gravitational instantons modulo isomorphisms.   
  To have a more complete understanding of $ALH^*$-gravitational instantons, it is natural to ask the following questions:
  \begin{que}
  		 What is the compactification of the moduli space of pointed $ALH^*$-gravitational instantons with respect to the pointed Gromov-Hausdorff topology? In particular, can we describe the boundaries of the moduli spaces in a geometric meaningful way?   		
  \end{que}
  Given a sequence of pointed $ALH^*$-gravitational instantons with a pointed Gromov-Hausdorff limit, we can first classify the limit by the dimension: the non-collapsing limit if the limit has dimension four, which will be studied in \cite{LSZ}. The collapsing limit if the limit has lower dimension is the main focus of this paper. At the end of the section, we bring the following theorem which motivates this work. 
  \begin{thm} \cite{LLL} \label{LLL} 
  	 Let $(\check{Y}_i,\check{D}_i)$ be a sequence of pairs of del Pezzo surfaces with a smooth anti-canonical divisor such that $\check{Y}_i$ converges to a smooth del Pezzo surface $\check{Y}_0$ and $\check{D}_i$ converge\footnote{If $Y_i\cong \mathbb{P}^1\times \mathbb{P}^1$, then $Y_0\cong \mathbb{P}^1\times \mathbb{P}^1$. Otherwise, $Y_i$ is blow up of $\mathbb{P}^2$ at a generic fixed amount of points. There is a natural topology on the configuration space of points on $\mathbb{P}^2$ and each corresponds to a del Pezzo surface. Here the convergence $Y_i\rightarrow Y_0$, we mean there exist presentations of $Y_i$ as blow up of $\mathbb{P}^2$ and the corresponding configurations converge.} to an irreducible nodal anti-canonical divisor $D_0\subseteq Y_0$. Equip $X_i$ with the Tian-Yau metric which is exact, then the corresponding ${Y}_i$ defined in Theorem \ref{uniformization} (with the hyperK\"ahler rotation such that the lifting of the vanishing cycle of $\check{D}_i$ become holomorphic in ${X}_i$) converge to a rational elliptic surface ${Y}_0$ as a complex manifold. 
  \end{thm}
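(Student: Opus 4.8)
The plan is to reduce the convergence of $Y_i$ to the convergence of the periods of the holomorphic volume form and then invoke the Torelli-type description of $ALH^*$-instantons. Fix the underlying smooth $4$-manifold $M$ of type $ALH^*_d$. By Theorem~\ref{torelli} and Theorem~\ref{surj period}, an $ALH^*_d$-instanton together with its canonical rational elliptic compactification $Y$ is determined by the cohomology class $[\boldsymbol{\omega}]\in H^2(M,\mathbb{R})^{\oplus 3}$ of its hyperK\"ahler triple (after choosing a marking), and the complex surface $Y$ itself is recorded by the ray of the holomorphic volume form $[\Omega]\in\mathbb{P}\big(H^2(M,\mathbb{C})\big)$. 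So it suffices to produce compatible markings $M\cong\check X_i=\check Y_i\setminus\check D_i$ and to show that $[\Omega_i]$, after a suitable normalization, converges in $H^2(M,\mathbb{C})$ modulo the monodromy to a class $[\Omega_0]$ defining a valid period point; the limiting instanton then has the same Euler number and still carries an elliptic fibration with section and an $I_d$-fibre at infinity, so $Y_0$ is a rational elliptic surface, and $Y_i\to Y_0$ follows from continuity of the inverse period map.

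\smallskip
First I would fix the markings. Away from a fixed neighborhood $U$ of the node of $D_0$ the family $(\check Y_i,\check D_i)\to(\check Y_0,D_0)$ is smooth, so Ehresmann's theorem matches $\check D_i\setminus U$ with $D_0\setminus U$; near the node one glues in the standard model of a nodal degeneration of elliptic curves, in which $\alpha_i\in H_1(\check D_i,\mathbb{Z})$ is the vanishing cycle and the modulus of $\check D_i$ is a point $\tau_i$ with $\mathrm{Im}\,\tau_i\to\infty$. This identifies $H^2(\check X_i,\mathbb{Z})$ with $H^2(M,\mathbb{Z})$ canonically up to the Picard--Lefschetz monodromy along $\alpha_i$; since the monodromy fixes $\alpha_i$ (hence, below, the splitting into $\mathrm{Re}\,\check\Omega_i$ and $\mathrm{Im}\,\check\Omega_i$) and the conclusion refers to no marking, this ambiguity is benign. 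By the hyperK\"ahler rotation \eqref{HK rel}, on $X_i$ the K\"ahler form is $\omega_i=\mathrm{Re}\,\check\Omega_i$ and the holomorphic volume form is $\Omega_i=\check\omega_i-i\,\mathrm{Im}\,\check\Omega_i$, where $\check\omega_i$ is the exact Tian--Yau metric (so $[\check\omega_i]$ is a fixed, marking-independent class, zero in the standard normalization) and $\check\Omega_i$ is the meromorphic $2$-form on $\check Y_i$ with simple pole along $\check D_i$, normalized by $\check\omega_i^2=\tfrac12\check\Omega_i\wedge\overline{\check\Omega}_i$ and with phase fixed so that $\alpha_i$ is special Lagrangian of phase zero. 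Thus the complex structure of $Y_i$ is governed by the periods of $\mathrm{Im}\,\check\Omega_i$, while the divergent part of the geometry should land in $[\omega_i]=[\mathrm{Re}\,\check\Omega_i]$ (the large-volume, i.e.\ collapsing, direction).

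\smallskip
For the period analysis I would split $H_2(\check X_i,\mathbb{Z})$ into ``bulk'' classes, supported in $\check Y_0\setminus U$, and ``tube'' classes $T(\sigma)$, the circle bundle over a $1$-cycle $\sigma$ in $\check D_i$. On a tube class $\int_{T(\sigma)}\check\Omega_i=2\pi i\int_\sigma\mathrm{Res}\,\check\Omega_i$, where $\mathrm{Res}\,\check\Omega_i$ is the holomorphic $1$-form on $\check D_i$ with $\int_{\check D_i}\tfrac{i}{2}\,\mathrm{Res}\,\check\Omega_i\wedge\overline{\mathrm{Res}\,\check\Omega_i}=2\pi d$ (the normalization forced by $[\check\omega_{\check D_i}]=2\pi c_1(N_{\check D_i})$). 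Writing $\check D_i=\mathbb{C}/(\mathbb{Z}\lambda_i+\mathbb{Z}\mu_i)$ with $\alpha_i\leftrightarrow\lambda_i$ and $\mu_i/\lambda_i=\tau_i$, this gives $|\lambda_i|=\sqrt{2\pi d}\,(\mathrm{Im}\,\tau_i)^{-1/2}\to0$ and $|\mu_i|\sim(\mathrm{Im}\,\tau_i)^{1/2}$; crucially, the special Lagrangian phase-zero condition pins the \emph{argument} of $\lambda_i$ to a fixed ray (up to monodromy), so that the large part of $\mu_i=\tau_i\lambda_i$ is its imaginary part, i.e.\ the divergent tube period sits in $\mathrm{Re}\,\check\Omega_i$ and not in $\mathrm{Im}\,\check\Omega_i$. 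Comparing with the local model of Theorem~\ref{local model} near infinity fixes the normalization: $\Omega_i$ equals, up to a fixed unitary constant and the factor $\alpha_i=\sqrt{\pi d\,\mathrm{Im}\,\tau_i}$, the canonically normalized semi-flat volume form, so the relevant class is $\alpha_i^{-1}[\Omega_i]$. One then computes that with this rescaling the tube components of $\alpha_i^{-1}[\mathrm{Im}\,\check\Omega_i]$ tend to $0$ while its bulk components tend to those of the limiting meromorphic $2$-form $\check\Omega_0$ of $(\check Y_0,D_0)$, whereas $\alpha_i^{-1}[\mathrm{Re}\,\check\Omega_i]$ retains a nonzero $T(\beta_i)$-component, consistent with collapsing. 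Hence $\alpha_i^{-1}[\Omega_i]$ converges to a class $[\Omega_0]$. Finally one checks that $[\Omega_0]$ defines a valid period point---it does not vanish on any $(-2)$-class and pairs trivially with the image of $H^1(D)$ in \eqref{eq: les}---which is precisely where the hypotheses that $\check Y_0$ is a genuine del Pezzo (no effective $(-2)$-classes) and that $D_0$ is merely nodal enter: a weak del Pezzo limit or a more degenerate anticanonical curve could force the limiting complex structure to acquire an $A_1$-singularity or push the period point to the boundary. Therefore $Y_i$ converges to a rational elliptic surface $Y_0$.

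\smallskip
The main obstacle is the period bookkeeping near the degenerating anticanonical divisor. One must (i) show that the special Lagrangian phase-zero normalization really forces the vanishing-cycle residue period $\lambda_i$ onto a fixed ray, and trace its effect on the real versus imaginary parts of the remaining (divergent) tube periods; (ii) identify the correct normalization factor, matching the canonically normalized holomorphic volume form on the $X_i$-side with $\check\Omega_i$ via Theorem~\ref{local model} and pinning the rate $\alpha_i\sim(\mathrm{Im}\,\tau_i)^{1/2}$; (iii) control the convergence ``up to the Picard--Lefschetz monodromy'', arguing that since the monodromy is a unipotent isometry fixing the distinguished classes it descends to honest convergence of the complex surfaces; and (iv) verify non-degeneracy of the limiting period point, the step that genuinely uses the large-complex-structure hypotheses. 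A secondary point is continuity of the inverse period map for $ALH^*$-instantons in the form needed above, which one can cite from the construction behind Theorem~\ref{torelli} or re-derive from the a priori estimates for degenerating complex Monge--Amp\`ere equations developed later in the paper.
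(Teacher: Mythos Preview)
The paper does not prove this statement; it is quoted from \cite{LLL} and used as input (see also Remarks \ref{rk LLL2} and \ref{rk LLL}). There is therefore no proof in the present paper to compare your proposal against.

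That said, your strategy---reduce the convergence of the rational elliptic surfaces $Y_i$ to convergence of the periods of $\Omega_i=\check\omega_i-i\,\mathrm{Im}\,\check\Omega_i$ and invoke a Torelli theorem---is the natural one, and your identification of the key technical points (i)--(iv) is accurate. A few cautions. First, the Torelli statement you actually need is the one for log Calabi--Yau surfaces \cite{GHK}, which governs the complex structure of $(Y_i,D_i)$ via the periods of $\Omega_i$; Theorem~\ref{torelli} is about the full hyperK\"ahler triple and sits one layer above what is required here. Second, ``continuity of the inverse period map'' is not a formal consequence of injectivity plus surjectivity: you must argue that a converging sequence of period points produces a converging sequence of pairs $(Y_i,D_i)$, which in \cite{LLL} is handled by an explicit construction of the limiting surface rather than by abstract appeal to Torelli. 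Third, the hypothesis in the theorem is only that $\check Y_0$ is a \emph{smooth} del Pezzo surface, not that it has no $(-2)$-curves; your non-degeneracy check in step (iv) should not rely on the absence of effective $(-2)$-classes on $\check Y_0$, but rather on the fact that the exact Tian--Yau class together with the nodal (not worse) degeneration of $\check D_i$ keeps the limiting period off the discriminant. Finally, your normalization discussion is essentially correct---the divergent tube period indeed lands in $[\mathrm{Re}\,\check\Omega_i]=[\omega_i]$ after the phase is pinned by the vanishing cycle---but making this precise requires the explicit local-model computation of Theorem~\ref{local model}, as you note.
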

\begin{rk}\label{rk LLL2}
	The rational elliptic surface $Y_0$ naturally appears in the context of mirror symmetry. Let $D_0$ be the distinguished $I_d$-fibre, where $d=(-K_{\check{Y}_i})^2$ and $X_0=Y_0\setminus D_0$. Then $X_0\rightarrow \mathbb{C}$ is the fibrewise compactification of the Landau-Ginzburg superpotential of the del Pezzo surface of degree $d$ as a monotone symplectic manifold \cite[Theorem 1.4]{LLL}. The construction $Y_0$ for each $d$ as non-toric blow-up of toric surfaces and the singular configuration of $Y_0$ are listed in \cite[Appendix A]{LLL}.
\end{rk}	

 \begin{rk} \label{rk LLL}
 	One can choose the metric on $X_i$ to be the Tian-Yau metric in the K\"ahler class from the restriction of $[\check{\omega}_i]\in H^2(\check{Y}_i)$ such that $[\check{\omega}_i]\rightarrow [\check{\omega}_0]\in H^2(\check{Y}_0)$ is a K\"ahler class, then the conclusion of the theorem still holds with the proof verbatim. 
 \end{rk}
  The above result motivates the following definition: denote by $B_i\cong \mathbb{P}^1$ (and $B_0$) the base of the elliptic fibration on $Y_i$ (and $Y_0$ respectively) and $\Delta_i\subseteq B_i$ the discriminant locus. By choosing $(\check{Y}_i,\check{D}_i)$ and $[\check{\omega}_i]$ generically, we may assume that all of singular fibres of $Y_i$ (except one if $d\neq 1$) are of type $I_1$. Denote by $\tau_i$ the modulus of $\check{D}_i$ such that $|\mbox{Re}\tau_i|\leq \frac{1}{2}, \mbox{Im}\tau_i\geq 1$. Denote by $\omega^i_{SK}$ (and $\omega^0_{SK}$) the special K\"ahler metric defined on $B_i\setminus \Delta_i$ (and $B_0\setminus \Delta_0$ respectively) associate to the elliptic fibration $\pi_i: Y_i\rightarrow B_i$ (and $\pi_0:Y_0\rightarrow B_0$ respectively) and the normalized holomorphic $2$-forms\footnote{See the definition in Section \ref{sec: ALH* gi}.}. There exists diffeomorphisms $f_i:B_0\rightarrow B_i$ such that for every compact set $K\subseteq B_0\setminus \Delta_0$, one has $f_i^*\omega^i_{SK}\rightarrow \omega^0_{SK}$ on $K$. 
  \begin{defn}\label{finite region}
  	 Given a sequence of pointed spaces $ (\check{X}_i, p_i)$, we say $\{p_i\}$ stays in the finite region if there exists a fix compact set $K\subseteq B_0$ avoiding the discriminant locus $\Delta_0$ such that $p_i\in  (f_i\circ \pi_0)^{-1}(K)$ for $i$ large enough. 
  \end{defn}
It worth mentioning that even the homotopy type of $f_i$ is not unique due to the automorphisms of $Y_0$. However, the notion of a sequence $p_i$ stays in the finite region is well-defined.
  From Theorem \ref{thm: local model}, the hyperK\"ahler rotation of the Tian-Yau metric $\check{\omega}_{\mathcal{C}}$ is a scaling of the semi-flat metric such that the fibre over $z$ has diameter bounded above by $O(\mbox{Im}(\tau)^{-1/4}(-\log{|z|})^{1/2})$. When $\mbox{Im}\tau\rightarrow \infty$, the resulting semi-flat metrics on the model geometry pointed Gromov-Hausdorff converge to the base with the special K\"ahler metric. This gives us a hint that the analogue result is possible for $ALH^*$-gravitational instantons.

\section{Construction of Ansatz} \label{sec: ansatz}
     
Given a rational elliptic surface $Y$ with an $I_d$-fibre and all the other singular fibres are of type $I_1$. Denote by $X$ the complement. The goal of this section is to construct an ansatz Ricci-flat metric in each of the K\"ahler class when the area of the elliptic fibre is small. 
 We will refer the readers to Section 3 \cite{GW} for the details of the Ooguri-Vafa metric. 
 
 \subsection{Untwisted Ansatz Metrics} We first recall the following result of gluing the semi-flat metric with Ooguri-Vafa metrics, which is a direct generalization of \cite[Theorem 4.4]{GW}.
\begin{lem}\label{ansatz metric} 
	Let $u_i\in \mathbb{C}$ be in the discriminant locus of the fibration ${X}\rightarrow B=\mathbb{C}$. There exist open neighborhoods $U_i^1\subseteq U_i^2$ of $u_i$ and a K\"ahler metric $\omega^a_{\epsilon}$ with the following properties:
	\begin{enumerate}
		\item The diameter of $U_i^1, U_i^2$ with respect to a fixed metric on the base $B$ is of size $O(\epsilon^p)$, $p<1$.  
		\item $\int_{fibre} \omega^a_{\epsilon}=\epsilon$.
		\item ${\omega}^a_{\epsilon}|_{\mathbb{C}\backslash U^2_i}={\omega}_{sf,\epsilon}$.
		\item ${\omega}^a_{\epsilon}|_{U^1_i}=T_{\sigma_i}^*\omega_{OV}$, here $T_{\sigma_i}$ denotes the translation by a local holomorphic section $\sigma_i$ over $U^{1}_i$ and $\omega_{OV}$ denotes the Ooguri-Vafa metric. 
		\item Let $f_{\epsilon}=\log{\bigg( \frac{\Omega\wedge \bar{\Omega}}{({\omega}^a_{\epsilon})^2}\bigg)}$, then 
		\begin{align*}
			\|f_{\epsilon} \|_{C^2_{g_{sf,\epsilon}}}\le Ce^{-1/C\epsilon^{1-p}},
		\end{align*} with some constant $C$ independent of $\epsilon$. 
	   \item (the integrality condition) $\int_{X} (\omega^a_{\epsilon})^2-\Omega\wedge \bar{\Omega}=0$.
	   \item The diameter of the elliptic fibre is at most $C\epsilon^{\frac{1}{2}}|\log{\epsilon}|$.  
	\end{enumerate}
\end{lem}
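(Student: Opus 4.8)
The plan is to adapt Gross--Wilson's construction \cite[Theorem 4.4]{GW} to the non-compact elliptic surface $X$, the key new features being that the base is $\mathbb{C}$ rather than $\mathbf{P}^1$ and that near infinity we must match the model semi-flat metric $\omega_{sf,\epsilon}$ rather than interpolate back to a global metric. First I would set up the Ooguri--Vafa metric $\omega_{OV}$ on a standard neighborhood of an $I_1$-fibre (or $I_d$, but the discriminant points $u_i$ here are the interior $I_1$-points), recalling that $\omega_{OV}$ is an exact perturbation of the flat semi-flat metric on an annular region, agreeing with a semi-flat model outside a compact set; this is precisely the content of \cite[Section 3]{GW}. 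Then, on each $U_i^2\setminus U_i^1$ I would glue $\omega_{OV}$ (translated by a local holomorphic section $\sigma_i$ chosen so the two semi-flat pieces match homologically) to $\omega_{sf,\epsilon}$ via a cut-off of the K\"ahler potential; the scaling of the neighborhoods $U_i^j$ to size $O(\epsilon^p)$ with $p<1$ is exactly the regime in which the error of this cut-off gluing is exponentially small, since the difference of the two potentials on the gluing annulus decays like $e^{-c/\epsilon^{1-p}}$ after rescaling the fibre. This gives (1), (3), (4) by construction and (2) because both model metrics have fibre area $\epsilon$.

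Next I would establish (5) and (7). For (5), on $U_i^1$ the metric is $T_{\sigma_i}^*\omega_{OV}$ which is genuinely Ricci-flat (the Ooguri--Vafa metric is hyperK\"ahler and satisfies $\omega_{OV}^2 = \frac12\Omega\wedge\bar\Omega$ in the appropriate normalization), so $f_\epsilon = 0$ there; outside $U_i^2$ the metric is $\omega_{sf,\epsilon}$ which is also Ricci-flat, so $f_\epsilon=0$; the only contribution is on the annulus $U_i^2\setminus U_i^1$ where $f_\epsilon$ is controlled by the $C^2$-norm (with respect to $g_{sf,\epsilon}$) of the difference of the glued potentials, which is $O(e^{-1/C\epsilon^{1-p}})$ by the potential estimate above — one must be careful that the $g_{sf,\epsilon}$-geometry of the annulus is rescaled so that covariant derivatives cost powers of $\epsilon^{-1/2}$, but these are polynomial and absorbed into the exponential. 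For (7), the fibre over a point in the base has the flat metric with area $\epsilon$ and modular parameter comparable to $|\log|z||$ (in the semi-flat region) or the Ooguri--Vafa fibre metric, and the flat torus of area $\epsilon$ and modulus $\sim|\log\epsilon|$ has diameter $\lesssim \epsilon^{1/2}|\log\epsilon|^{1/2}\le C\epsilon^{1/2}|\log\epsilon|$.

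The remaining point is (6), the integrality/cohomological condition $\int_X (\omega_\epsilon^a)^2 - \Omega\wedge\bar\Omega = 0$, together with the closely related condition in (6) of the enclosing statement. Here I would argue cohomologically: $[\omega_\epsilon^a]$ is built so that its fibre-integral is $\epsilon$ and its pairing with the class $[C]$ from \eqref{eq: les} is prescribed; since $\int_X(\omega_\epsilon^a)^2$ and $\int_X\Omega\wedge\bar\Omega$ are determined by these cohomological data (and the topology of $X$, using that $X$ is an exact deformation retract of a neighborhood of $D$ near infinity so the relevant integrals converge), the two quantities can be made equal by adjusting the free parameter in the choice of $\omega_0$ / the translation section — this is the non-compact analogue of the area-matching in \cite{GW}, and I would phrase it via Theorem \ref{generalized Hein's metric} which already packages the allowable cohomology classes. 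The main obstacle I expect is not any single estimate but the bookkeeping of \emph{non-compactness}: one must verify that all the integrals over $X$ converge (using exponential decay of $\Omega\wedge\bar\Omega$ relative to $\omega_{sf,\epsilon}^2$ near infinity, from the asymptotics in Theorem \ref{generalized Hein's metric}), that the gluing near infinity to $\omega_{sf,\epsilon}$ is done so that $\omega_\epsilon^a$ is honestly a complete K\"ahler metric with the stated asymptotics, and that the section $\sigma_i$ and the choice of potentials are compatible with the multi-section picture of Section \ref{sec: generalized sf metric} so that everything descends to $X$ rather than a cover.
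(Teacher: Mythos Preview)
Your overall plan is exactly what the paper does: the paper gives no proof of this lemma and simply states it as ``a direct generalization of \cite[Theorem 4.4]{GW}'', referring the reader to \cite[Section 3]{GW} for the Ooguri--Vafa metric. So adapting Gross--Wilson's gluing of $\omega_{OV}$ to $\omega_{sf,\epsilon}$ via cut-off of potentials, with the scaling of the $U_i^j$ chosen so the error is $O(e^{-1/C\epsilon^{1-p}})$, is precisely the intended argument, and your treatment of (1)--(5) and (7) is on target.

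There is one genuine confusion in your discussion of (6) and of non-compactness. You write that $\int_X(\omega^a_\epsilon)^2$ and $\int_X\Omega\wedge\bar\Omega$ are ``determined by cohomological data'', and later that one must check convergence ``using exponential decay of $\Omega\wedge\bar\Omega$ relative to $\omega_{sf,\epsilon}^2$ near infinity''. Both statements are off: each integral separately diverges (the volume of $X$ is infinite), and near infinity there is no decay to invoke because $\omega_{sf,\epsilon}$ is genuinely hyperK\"ahler, so $(\omega^a_\epsilon)^2=\omega_{sf,\epsilon}^2=\Omega\wedge\bar\Omega$ \emph{exactly} outside $\bigcup_i U_i^2$. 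The correct (and simpler) argument is that the integrand $(\omega^a_\epsilon)^2-\Omega\wedge\bar\Omega$ is supported only on the gluing annuli $U_i^2\setminus U_i^1$, where by (5) it is $O(e^{-1/C\epsilon^{1-p}})$; hence the integral is finite and exponentially small, and one makes it exactly zero by the same small adjustment as in \cite{GW} (e.g.\ a compactly supported $i\partial\bar\partial$-correction or a slight rescaling), not by invoking Theorem~\ref{generalized Hein's metric} or cohomology of $X$. Once you replace your cohomological paragraph with this local observation, the proof goes through.
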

For our purpose and the discussion in Section \ref{sec: synergy}, we will apply Lemma \ref{ansatz metric} in the case $p=\frac{1}{2}$ later. 
In particular, Lemma \ref{ansatz metric} implies the following corollary immediately. In the rest of this paper, we use $g^a_{\varepsilon}$ to denote the Riemannian metric corresponding to the K\"ahler form $\omega^a_{\varepsilon}$.
\begin{cor}\label{est_of_f} Let $x_0\in X$ and $d(x):=dist(x,x_0)$ be the distance with respect to $g^a_{\varepsilon}$. There exists $C>0$ such that
\begin{align}
|f_{\varepsilon}|(x)<Ce^{-d(x)^{\frac{2}{3}}/C\varepsilon^{\frac{1}{6}}}\mbox{ with }supp(f_{\varepsilon})\subset A( x_0,1/C\sqrt{\varepsilon}, C/\sqrt{\varepsilon}),
\end{align}
for all $x\in X$ where
\begin{align*}
A( x_0,1/C\sqrt{\varepsilon}, C/\sqrt{\varepsilon}):=\Big\{x\in X\Big|\frac{1}{C}\sqrt{\varepsilon} \leq d(x)\leq C\sqrt{\varepsilon} \Big\}
\end{align*}
is an annular region centered at $x_0$.
\end{cor}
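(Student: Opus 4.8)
\textbf{Proof proposal for Corollary \ref{est_of_f}.}

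The plan is to translate the statements of Lemma \ref{ansatz metric}, which are phrased in terms of the base metric on $B$ and the $C^2_{g_{sf,\epsilon}}$-norm, into pointwise statements measured by the distance function $d$ of the glued metric $g^a_\epsilon$ on the total space $X$. The two points to establish are the support statement and the exponential bound, and both reduce to understanding how base-distances in $B$ and fibre-sizes compare to $g^a_\epsilon$-distances in $X$.

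First I would handle the support. By Lemma \ref{ansatz metric}(3), $f_\epsilon$ is supported in $\pi^{-1}(\bigcup_i U_i^2)$, and by (1) each $U_i^2$ has diameter $O(\epsilon^p)$ in a fixed base metric, with $p=\tfrac12$ the relevant case. On $B\setminus \bigcup U_i^2$ the metric is exactly $\omega_{sf,\epsilon}$, whose base part is $\omega_{SK}$ rescaled by a factor $\sim \epsilon^{-1}$ relative to the fixed reference metric on $B$ (compare \eqref{sf metric}: the base term carries the factor $\tfrac{d|\log|z||}{2\pi\epsilon}$, i.e. $\epsilon^{-1}$ up to a slowly varying log factor). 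Hence a base-distance of order $\epsilon^{1/2}$ inflates to a $g^a_\epsilon$-distance of order $\epsilon^{1/2}\cdot\epsilon^{-1/2}=1$, up to logarithmic corrections which one absorbs into the constant $C$; meanwhile the fibre directions have diameter $O(\epsilon^{1/2}|\log\epsilon|)$ by (7), which is negligible. Fixing $x_0\in X$ away from the discriminant region and using the triangle inequality, the points of $\pi^{-1}(U_i^2)$ therefore satisfy $\tfrac1C\sqrt{\epsilon}\le d(x)\le C\sqrt{\epsilon}$ once one also records the lower bound (the $U_i^2$ are a definite base-distance away from any fixed basepoint), giving the annulus $A(x_0,1/C\sqrt\epsilon,C/\sqrt\epsilon)$. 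I would set up the annulus bookkeeping so that the constant $C$ simultaneously controls the log factors, the finitely many $U_i^2$, and the choice of $x_0$.

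Next, the exponential decay. Lemma \ref{ansatz metric}(5) gives $\|f_\epsilon\|_{C^2_{g_{sf,\epsilon}}}\le C e^{-1/C\epsilon^{1-p}}=Ce^{-1/C\epsilon^{1/2}}$ globally, so in particular $|f_\epsilon|\le Ce^{-1/C\sqrt\epsilon}$ everywhere. For $x$ in the annulus one has $d(x)\le C\sqrt\epsilon$, hence $d(x)^{2/3}\le C'\epsilon^{1/3}$ and $d(x)^{2/3}/(C'\epsilon^{1/6})\le C''\epsilon^{1/6}$, which is bounded; therefore on the support of $f_\epsilon$ the claimed factor $e^{-d(x)^{2/3}/C\epsilon^{1/6}}$ is bounded below by a positive constant, and the inequality $|f_\epsilon|(x)< Ce^{-d(x)^{2/3}/C\epsilon^{1/6}}$ follows from the uniform bound $|f_\epsilon|\le Ce^{-1/C\sqrt\epsilon}\le Ce^{-d(x)^{2/3}/C\epsilon^{1/6}}$ after enlarging $C$ (since $1/C\sqrt\epsilon = \epsilon^{-1/2}/C$ dominates the bounded quantity $d(x)^{2/3}/(C\epsilon^{1/6})$ for small $\epsilon$). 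Outside the support there is nothing to prove since both sides are compared with $f_\epsilon\equiv 0$; the stated bound is vacuously consistent.

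The main obstacle, and the step deserving the most care, is the comparison of distances: justifying that on the region where the metric differs from $\omega_{sf,\epsilon}$ — i.e. on $\pi^{-1}(U_i^1)$ where $\omega^a_\epsilon=T^*_{\sigma_i}\omega_{OV}$ and on the transition annulus $\pi^{-1}(U_i^2\setminus U_i^1)$ — the $g^a_\epsilon$-distance to $x_0$ is still comparable to $\sqrt\epsilon$. This requires quantitative control of the Ooguri-Vafa metric's diameter in base directions near a nodal fibre (its base part also scales like $\epsilon^{-1}|\log|\cdot||$ away from the node, with the logarithmic degeneration at the node contributing only a bounded further factor to the diameter of $U_i^1$), and control that the gluing construction of Lemma \ref{ansatz metric} keeps $\omega^a_\epsilon$ uniformly comparable to $\omega_{sf,\epsilon}$ across the transition region. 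Once the distance comparison $d(x)\sim\sqrt\epsilon$ on $\mathrm{supp}(f_\epsilon)$ is in hand — essentially a restatement of Lemma \ref{ansatz metric}(1) in the total-space metric — the corollary is immediate.
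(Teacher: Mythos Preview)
Your approach is the paper's approach, but you have the distance scale inverted, and this is a genuine error (induced, admittedly, by a typo in the displayed definition of the annulus). The notation $A(x_0,1/C\sqrt\epsilon,C/\sqrt\epsilon)$ is to be read as $\{\tfrac{1}{C\sqrt\epsilon}\le d(x)\le \tfrac{C}{\sqrt\epsilon}\}$, so that $d(x)$ is \emph{large}, of order $\epsilon^{-1/2}$; the line ``$\tfrac1C\sqrt\epsilon\le d(x)\le C\sqrt\epsilon$'' in the statement is a misprint, as the later use of the corollary in the proof of Proposition~\ref{lema_Hein} (e.g.\ the balls $B(x_0,C/\sqrt\epsilon)$ and the condition $r_i>\tfrac{1}{C\sqrt\epsilon}$) makes clear. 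Your own scaling computation already points to this: the base part of $g^a_\epsilon$ is $\sim\epsilon^{-1}$ times the reference metric, so the \emph{fixed} reference distance between $x_0$ and the discriminant points $u_i$ becomes $\sim\epsilon^{-1/2}$ in $g^a_\epsilon$. The quantity $\epsilon^{1/2}\cdot\epsilon^{-1/2}=O(1)$ that you computed is the $g^a_\epsilon$-\emph{diameter} of each $U_i^2$, not the distance from the support to $x_0$. Under your reading $d(x)\le C\sqrt\epsilon$, the support claim is simply false (the gluing regions recede to infinity in $g^a_\epsilon$), and the exponential bound, while formally valid, is vacuous since $d(x)^{2/3}/\epsilon^{1/6}\to 0$ and the right-hand side does not decay.

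With the correct scale $d(x)\le C/\sqrt\epsilon$ on the support, the exponential comparison is exactly the paper's one line: $d(x)^{2/3}/\epsilon^{1/6}\le C^{2/3}\epsilon^{-1/3-1/6}=C^{2/3}\epsilon^{-1/2}$, hence $Ce^{-1/(C\epsilon^{1/2})}\le Ce^{-d(x)^{2/3}/(C'\epsilon^{1/6})}$ after enlarging the constant; off the support $f_\epsilon\equiv 0$ and the bound is trivial. Your additional discussion of the Ooguri--Vafa region and the transition annulus is more detail than the paper gives (it simply asserts one may take $U_i^2\subset A(x_0,\tfrac{1}{C\sqrt\epsilon},\tfrac{C}{\sqrt\epsilon})$), but once the scale is corrected it is not needed here and not wrong.
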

\begin{proof}
By the definition of $\omega_{sf,\varepsilon}$ and (3) in Lemma \ref{ansatz metric}, we have
\begin{align}
\bigg( \frac{\Omega\wedge \bar{\Omega}}{({\omega}^a_{\epsilon})^2}\bigg)=\bigg( \frac{\Omega\wedge \bar{\Omega}}{({\omega}_{sf,\epsilon})^2}\bigg)=\bigg( \frac{\Omega\wedge \bar{\Omega}}{({\omega}_{sf})^2}\bigg)=1
\end{align}
on the complement of $U^2_i$. By (1) in Lemma \ref{ansatz metric}, we can choose $C$ large enough such that $U^2_i\subseteq A( x_0,1/C\sqrt{\varepsilon}, C/\sqrt{\varepsilon})$. So we have
\begin{align}
supp(f_{\varepsilon})\subset A( x_0,1/C\sqrt{\varepsilon}, C/\sqrt{\varepsilon}).
\end{align}
That means Lemma \ref{ansatz metric} (5) implies
\begin{align}
|f_{\varepsilon}|(x)\leq Ce^{-1/C\epsilon^{\frac{1}{2}}}\leq Ce^{-d(x)^{\frac{2}{3}}/C\epsilon^{\frac{1}{6}}},
\end{align}
for all $x\in A( x_0,1/C\sqrt{\varepsilon}, C/\sqrt{\varepsilon})$ and
\begin{align}
|f_{\varepsilon}|(x)=0\leq Ce^{-d(x)^{\frac{2}{3}}/C\epsilon^{\frac{1}{6}}},
\end{align}
for all $x\in X\setminus A( x_0,1/C\sqrt{\varepsilon}, C/\sqrt{\varepsilon})$. So we complete this proof.
\end{proof}
\begin{rk}
In the rest of this paper, we will use $C$ to denote a constant that doesn't depend on $\varepsilon$ and it can be chosen as large as we want. The value of $C$ will increase consecutively
finitely many times in this paper.
\end{rk}

\subsection{Ansatz Twisted by B-Fields} \label{sec: B-field}
The ansatz in Lemma \ref{ansatz metric} has a unique cohomology class after fixing the size of the fibre and thus can not realize all possible cohomology classes of the gravitational instantons. In this section, we will explain how to construct ansatz metrics for other cohomology classes following \cite[Theorem 4.5]{GW}. Recall that $\pi:X\rightarrow B\cong \mathbb{C}$ is the elliptic fibration.  Let $\mathbb{B}\in H^1(B,R^1\pi_*\mathbb{R})$. From the short exact sequence 
  \begin{align} \label{ses}
  	 0\rightarrow R^1\pi_*\mathbb{R} \rightarrow  C^{\infty}(T_{B})\rightarrow  \mathcal{F}\rightarrow 0,
  \end{align} we have $H^0(B,\mathcal{F})\twoheadrightarrow H^1(B,R^1\pi_*\mathbb{R})$ since $C^{\infty}(T_{B})$ is acyclic. Each element in the $H^0(B,\mathcal{F})$ can realized as a Cech cocycle $\{(U_i,\sigma_i)\}_{i\in I}$, where $\{U_i\}_{i\in I} $ is an open cover of $B$ such that 
\begin{enumerate}
	\item each $U_i$ contains at most one critical point of $\pi$,
	\item $\omega^a_{\epsilon}|_{U_i\cap U_j}=\omega_{sf,\epsilon}$ if $i\neq j$.
	\item $\sigma_i\in H^0(U_i,C^{\infty}(T^*_B))$ with $\sigma_i-\sigma_j|_{U_i\cap U_j}\in H^0(U_i\cap U_j, R^1\pi^*\mathbb{R})$. 
\end{enumerate} 
Given such a Cech cocycle, we may construct a twisted holomorphic $2$-form $\mathbb{B}\Omega$ defined by $T_{\sigma_i}^*\Omega_{\epsilon}$ on $U_i$.
since $\sigma_i-\sigma_j$ is a flat section and thus locally holomorphic with respect to $u$ and $\Omega$ locally is of the form $\kappa(u)dx\wedge du$, we have $T_{\sigma_i-\sigma_j}^*\Omega=\Omega$ on $U_i\cap U_j$. Thus, $T_{\sigma_i}^*\Omega=\T_{\sigma_j}^*\Omega$ on $U_i\cap U_j$ and $\mathbb{B}\Omega$ is a globally defined closed $2$-form. Since we still have $\mathbb{B}\Omega\wedge \mathbb{B}\Omega=0$, $\mathbb{B}\Omega$ is locally decomposable and thus determined an integrable complex structure. From \eqref{ses}, any two liftings of an element in $H^1(B,R^1\pi_*\mathbb{R})$ is differed by a global section in $H^0(B,C^{\infty}(T_{B_0}^*))$. Since every smooth section is homotopic to the reference zero section, the cohomology class of $[\mathbb{B}\Omega]$ does not depend on the lifting. 

The following lemma is different from the K3 surface in Gross-Wilson due to the non-compact feature. 
\begin{lem}\label{lem twist}
	There exists a diffeomorphism $F:X\rightarrow X$ isotopic to identity such that $F^*\Omega=\mathbb{B}\Omega$. In other words, the complex manifold with the complex structure determined by $\mathbb{B}\Omega$ is biholomorphic to $X$.
\end{lem}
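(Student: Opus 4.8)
The plan is to realize $\mathbb{B}\Omega$ as the pull-back of $\Omega$ under a fibre-translation diffeomorphism that is built by gluing the local translations $T_{\sigma_i}$ together, correcting the discrepancy with a compactly-supported (or exponentially-decaying) isotopy. First I would recall that the Čech cocycle $\{(U_i,\sigma_i)\}$ defines, on each $U_i$, the fibrewise translation $T_{\sigma_i}:\pi^{-1}(U_i)\to\pi^{-1}(U_i)$, and that on overlaps $T_{\sigma_i}$ and $T_{\sigma_j}$ differ by translation by the flat section $\sigma_i-\sigma_j\in H^0(U_i\cap U_j,R^1\pi_*\mathbb{R})$. Translation by a flat section is a globally defined automorphism of the torus bundle over that overlap (it is the monodromy-equivariant translation), so the collection $\{T_{\sigma_i}\}$ fails to patch only by these \emph{lattice} translations; these act trivially on the total space if we first pass to the quotient by the lattice. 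Concretely, I would argue that although no single $T_{\sigma_i}$ globalizes, the induced maps on the quotient elliptic surface agree on overlaps up to the deck transformations that are already built into the identification $X_{mod}\cong \Delta^*\times\mathbb C/\Lambda$, hence assemble to a genuine self-diffeomorphism $F_0$ of $X$ over the identity of $B$, defined away from the singular fibres, and $F_0^*\Omega = \mathbb{B}\Omega$ by exactly the computation in the paragraph preceding the lemma ($T^*_{\sigma_i-\sigma_j}\Omega=\Omega$ since $\kappa(u)\,dx\wedge du$ is translation-invariant in $x$).

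Next I would handle the singular fibres and the behavior near infinity, which is where non-compactness enters. Over a small disc around each $I_1$ (or $I_d$) fibre the section $\sigma_i$ is a smooth section of $T^*_B$, and translation by such a section extends smoothly across the singular fibre of the Ooguri–Vafa / generalized semi-flat model (this is the same extension used in Lemma \ref{ansatz metric}(4) to define $T_{\sigma_i}^*\omega_{OV}$); so $F_0$ extends to a diffeomorphism of all of $X$. Near infinity, the relevant $U_i$ is the chart around the $I_d$-fibre at $\infty$, and since the semi-flat holomorphic volume form is translation-invariant along the fibre, $F_0$ is asymptotic to the fibrewise translation by $\sigma_i$ there; because $[\mathbb{B}\Omega]$ is independent of the lifting, I may choose the lifting so that $\sigma_i$ vanishes near the end, making $F_0$ equal to the identity near infinity. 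This gives $F_0:X\to X$ a diffeomorphism equal to the identity outside a compact set with $F_0^*\Omega=\mathbb{B}\Omega$.

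Finally I would upgrade "$F_0$ is a diffeomorphism" to "$F_0$ is isotopic to the identity." Since $F_0$ is constructed as a fibrewise translation, it is connected to the identity through the family $\{T_{t\sigma_i}\}_{t\in[0,1]}$ of fibrewise translations (the cocycle condition is linear in $\sigma$, so $t\sigma$ is again a valid cocycle for each $t$, patching to $F_t$), and this $\{F_t\}$ is the desired isotopy, all maps being the identity near infinity by the same choice of lifting; hence $F_0$ is isotopic to $\mathrm{id}_X$. The statement about biholomorphism is then immediate: $\mathbb{B}\Omega$ and $\Omega$ define the same complex structure up to the diffeomorphism $F_0$ — more precisely, $(X, J_{\mathbb{B}\Omega})$ is biholomorphic to $(X,J_\Omega)=X$ via $F_0$, because a diffeomorphism carrying one holomorphic volume form to another intertwines the induced integrable complex structures.

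\textbf{Main obstacle.} The delicate point is the patching over the singular fibres together with the control near the non-compact end: one must check that translation by $\sigma_i$ genuinely extends across each $I_1$-fibre (not merely across the smooth locus) and that the lifting can be chosen to trivialize $\sigma$ near infinity without disturbing the cohomology class $[\mathbb{B}\Omega]$ — this is exactly the place where the argument departs from the compact K3 case of Gross–Wilson, since there is no longer a global compactness giving the patching and the isotopy for free, and one needs the exponential-decay structure of the $ALH^*$ end to keep the constructed diffeomorphism well-behaved at infinity.
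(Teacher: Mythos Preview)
Your patching argument has a genuine gap. The transition data $\sigma_i-\sigma_j$ lies in $H^0(U_i\cap U_j, R^1\pi_*\mathbb{R})$, i.e.\ it is a \emph{real} flat section of the period local system, not an integral one. Translation by such a section is a well-defined diffeomorphism of the torus bundle over $U_i\cap U_j$, but it is \emph{not} the identity on the quotient $\mathbb{C}/\Lambda$ unless $\sigma_i-\sigma_j$ happens to take values in the integral lattice $R^1\pi_*\mathbb{Z}\subset R^1\pi_*\mathbb{R}$. Hence the local maps $T_{\sigma_i}$ in general do \emph{not} assemble to a global diffeomorphism $F_0$: the ``deck transformations already built into $\mathbb{C}/\Lambda$'' that you invoke are precisely the integral lattice translations, and these do not absorb a real discrepancy. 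Indeed, if they did, every class $\mathbb{B}\in H^1(B,R^1\pi_*\mathbb{R})$ would be represented by a global smooth section of the vertical bundle and would be trivial. The twisted form $\mathbb{B}\Omega$ is globally well-defined only because $\Omega$, being locally $\kappa(u)\,dx\wedge du$, is invariant under \emph{all} fibre translations, integral or not; that same invariance is exactly why you cannot read off a global $F_0$ from the equality $T_{\sigma_i}^*\Omega=T_{\sigma_j}^*\Omega$. The isotopy $\{T_{t\sigma_i}\}_{t\in[0,1]}$ fails for the same reason.

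The paper's proof takes a completely different, cohomological route and does not attempt to build $F$ directly. One first checks that $\Omega-\mathbb{B}\Omega$ is the pull-back of a $2$-form from the base $B$. Then, using the long exact sequence of the pair $(X,X_0)$ together with the Leray spectral sequence for $\pi:X_0\to B_0$ (and the vanishing $H_1(X;\mathbb{R})=0$ from \cite{SZ}), one shows that $H_2(X_0)\twoheadrightarrow H_2(X)$ and that every class in $H_2(X)$ is represented by a cycle whose image under $\pi$ is at most one-dimensional. Such cycles pair trivially with any form pulled back from $B$, so all periods of $\Omega$ and $\mathbb{B}\Omega$ coincide. The existence of the required diffeomorphism then follows from the Torelli theorem for log Calabi--Yau surfaces \cite{GHK}, applied to the compactification $(Y,D)$. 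This Torelli step is what replaces the direct construction, and it is the point where the non-compact $ALH^*$ setting genuinely differs from the compact K3 argument in Gross--Wilson.
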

\begin{proof}
	Denote by $X_0$ the complement of singular fibres in $X$. From the long exact sequence of relative pairs and Poincar\'e duality (for instance, see \cite[Section 7.12]{D}), we have 
	  \begin{align}
	  	H_3(X)\rightarrow H^1(X\setminus X_0) \rightarrow H_2(X_0)\rightarrow H_2(X)\rightarrow H^2(X\setminus X_0) \xrightarrow{\delta} H_1(X_0)\rightarrow 0.
	  \end{align} Here we use the coefficients in $\mathbb{R}$ and we have $H_1(X)\cong 0$ from \cite[Corollary 7.5]{SZ}. Since $X\setminus X_0$ is the disjoint union of the singular fibres, we have $\mbox{dim}_{\mathbb{R}}H^1(X\setminus X_0)=\mbox{dim}_{\mathbb{R}}H^2(X\setminus X_0)=k$, where $k$ is the number of singular fibres as we assume all the singular fires are of type $I_1$. The Leray spectral sequence for the fibration $\pi:X_0\rightarrow B_0$,
        \begin{align*}
        	  H_p(B_0, H_q(T^2))\Rightarrow H_{p+q}(X_0,\mathbb{R}),
        \end{align*} degenerates at $E_2$-page. We have $\mbox{dim}_{\mathbb{R}}H_1(X_0)=k$ and thus $\delta$ is an isomorphism. In particular,  $H_2(X_0)\twoheadrightarrow H_2(X)$. Again from the Leray spectral sequence of the fibration, the $2$-cycles of $X$ are generated by the fibre or $S^1$-fibration over graphs in the base $B$. 	
	Direct calculation shows that $\Omega-\mathbb{B}\Omega$ is a pull-back of $2$-form from the base. Since any element $H_2(X)$ can be realized as some $2$-cycle with projection to $1$-skeleton under $\pi$, the periods of $\Omega$ and $\mathbb{B}\Omega$ are exactly the same. 
	Then it follows from the Torelli theorem of log Calabi-Yau surfaces \cite{GHK}. 
\end{proof}

We may similarly define $\mathbb{B}\omega$:
since $\sigma_i-\sigma_j$ is a flat section, direct calculation shows that  $T_{\sigma_i-\sigma_j}^*\omega_{\epsilon}^a=\omega_{\epsilon}^a$ on $U_i\cap U_j$ because $\omega_{\epsilon}^a|_{U_i\cap U_j}$ is the semi-flat metric. Equivalently, we have $T^*_{\sigma_i}\omega_{\epsilon}^a=T^*_{\epsilon_j}\omega_{\epsilon}^a$ on $U_i\cap U_j$. Thus, $\mathbb{B}\omega$ is a well-defined closed K\"ahler form with respect to the almost complex structure given by $\mathbb{B}\Omega$. From Lemma \ref{lem twist}, we have $(F^{-1})^*\mathbb{B}\omega$ is a K\"ahler form on $X$ in the cohomology class $[\mathbb{B}\omega]$. 

\begin{ex}
	We apply the construction to the semi-flat metric $\omega_{sf,\epsilon}$ on $X_{mod}$ in \eqref{sf metric}. From the monodromy of the fibration of $X_{mod}\rightarrow \Delta^*$, we have $H^1(\Delta^*,R^1(\pi_{mod})_*\mathbb{R})\cong \mathbb{R}$. 
	For any $\mathbb{B}\in H^1(\Delta^*,R^1(\pi_{mod})_*\mathbb{R})$, we may represent it by simply connected open cover of $\Delta^*$ with the holomorphic function to be $b_0\log{z}^2$, $b_0\in \mathbb{R}$. Then we have $\mathbb{B}\Omega=\Omega$ and one gets the generalized semi-flat metrics in Section \ref{sec: generalized sf metric}. 
\end{ex}
 Moreover, we may achieve all possible K\"ahler classes this way. 
\begin{lem} \cite[p.45]{GW} \label{B-twist Kahler form}
	Given any K\"ahler class $[\omega]\in H^2(X,\mathbb{R})$ with size of the elliptic fibre being $\epsilon$, then there exists $\mathbb{B}\in H^1(B,R^1\pi_*\mathbb{R})$ such that $[\mathbb{B}\omega_{\epsilon}^a]=[\omega]$. Moreover, 
	  \begin{align*}
	  	  [\mathbb{B}\omega_{\epsilon}^a]-[\omega_{\epsilon}^a]=\epsilon \mathbb{B}.
	  \end{align*}
\end{lem}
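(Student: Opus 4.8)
The plan is to read off both statements from the long exact sequence of the pair $(X, X\setminus X_0)$ used in the proof of Lemma \ref{lem twist}, combined with the Leray spectral sequence of $\pi: X_0 \to B_0$. Recall from there that $H_2(X_0) \twoheadrightarrow H_2(X)$ and that a basis of $H_2(X)$ is represented by the fibre class $[F]$ together with $S^1$-subbundles over loops in $B_0$ (equivalently, by dualizing, $H^2(X,\mathbb{R})$ is spanned by the Poincar\'e dual of $[F]$ and classes pulled back from $H^1(B_0, R^1\pi_*\mathbb{R})$ under the edge map of the Leray spectral sequence). First I would make precise the pairing between $H^1(B,R^1\pi_*\mathbb{R})$ and the ``vertical'' part of $H_2(X)$: given $\mathbb{B}$ represented by a Cech cocycle $\{(U_i,\sigma_i)\}$ as in Section \ref{sec: B-field}, the class $[\mathbb{B}\omega_\epsilon^a]-[\omega_\epsilon^a]$ is represented by the globally-defined closed $2$-form $T^*_{\sigma_i}\omega_\epsilon^a - \omega_\epsilon^a$, which on each $U_i$ equals the exterior derivative of the $1$-form obtained by contracting $\omega_\epsilon^a$ with the fibrewise translation vector field attached to $\sigma_i$; a direct computation on the semi-flat model shows this difference class pairs with an $S^1$-subbundle over a loop $\gamma \subset B_0$ exactly as $\epsilon$ times the period of $\mathbb{B}$ along $\gamma$, and pairs with $[F]$ as zero. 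That is the content of the displayed formula $[\mathbb{B}\omega_\epsilon^a]-[\omega_\epsilon^a]=\epsilon\mathbb{B}$, once we identify, via the edge map, the vertical part of $H^2(X,\mathbb{R})$ with $H^1(B,R^1\pi_*\mathbb{R})$.

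For the existence statement, let $[\omega] \in H^2(X,\mathbb{R})$ be any K\"ahler class with $[\omega]\cdot[F] = \epsilon$. By Lemma \ref{ansatz metric}(2) the untwisted ansatz $\omega_\epsilon^a$ also satisfies $[\omega_\epsilon^a]\cdot[F] = \epsilon$, so the difference $[\omega] - [\omega_\epsilon^a]$ pairs to zero with $[F]$, hence lies in the ``vertical'' subspace of $H^2(X,\mathbb{R})$. Under the identification above, this vertical subspace is precisely $\epsilon\cdot H^1(B, R^1\pi_*\mathbb{R})$ (the surjectivity here being exactly the surjectivity $H_2(X_0)\twoheadrightarrow H_2(X)$ dualized, together with the fact that the $S^1$-subbundle classes are detected by periods of $R^1\pi_*\mathbb{R}$). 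Therefore there is a unique $\mathbb{B} \in H^1(B, R^1\pi_*\mathbb{R})$ with $\epsilon\mathbb{B} = [\omega] - [\omega_\epsilon^a]$, and then $[\mathbb{B}\omega_\epsilon^a] = [\omega_\epsilon^a] + \epsilon\mathbb{B} = [\omega]$ by the first part. (One should note $\mathbb{B}\omega_\epsilon^a$ is genuinely K\"ahler, not merely closed: it is a small perturbation of $\omega_\epsilon^a$ supported where the ansatz is semi-flat, and positivity is preserved fibrewise and on the base by the explicit form of the translation; alternatively cohomology classes near a K\"ahler class that lie in the K\"ahler cone are again K\"ahler, and $[\omega]$ was assumed K\"ahler.)

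The main obstacle I anticipate is the bookkeeping in the non-compact setting: unlike the K3 case in \cite{GW}, $B \cong \mathbb{C}$ is open and one must be careful that $H^1(B, R^1\pi_*\mathbb{R})$, computed with the monodromy-twisted coefficients concentrated at the $I_1$ (and possibly one $I_d$) loci, really is dual to the vertical cycles in $H_2(X)$ — this is where the computation in the proof of Lemma \ref{lem twist} ($\dim_\mathbb{R} H_1(X_0) = k$, $\delta$ an isomorphism, $H_2(X_0) \twoheadrightarrow H_2(X)$) is essential, and the pairing must be checked to be perfect on the vertical part rather than merely surjective. The other place requiring care is verifying that the formula $[\mathbb{B}\omega_\epsilon^a]-[\omega_\epsilon^a]=\epsilon\mathbb{B}$ holds with the correct normalization of $\mathbb{B}$ as an element of the twisted cohomology (the factor $\epsilon$ comes from the area of the fibre entering the semi-flat metric $W \sim \epsilon$, cf. \eqref{special Kahler} and \eqref{sf metric}); this is a routine but not entirely trivial computation in the semi-flat model on $U_i \cap U_j$, essentially the same as in \cite[p.45]{GW} adapted to our $\omega_{sf,\epsilon}$.
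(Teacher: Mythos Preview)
The paper does not actually supply its own proof of this lemma: it simply cites \cite[p.~45]{GW} and appends a one-line remark that the non-compact case is \emph{simpler} than the K3 case because $H_2(B,H_0(T^2))\cong 0$ (so there is no section-type class to worry about, and the only $2$-cycles that could pair nontrivially with a form pulled back from the base are already absent). Your proposal is therefore not so much a comparison target as a fleshing-out of what the paper leaves implicit, and in that capacity it is correct: the Leray/long-exact-sequence description of $H_2(X)$ from the proof of Lemma~\ref{lem twist}, the period computation on the semi-flat model giving the factor $\epsilon$, and the matching of the vertical subspace with $H^1(B,R^1\pi_*\mathbb{R})$ are exactly the ingredients of the Gross--Wilson argument, and the obstacles you flag (perfectness of the pairing in the open setting, normalization of $\epsilon$) are the right ones. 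The one point the paper's remark makes that you could state more crisply is that the vanishing of $H_2(B,H_0(T^2))$ is what guarantees the vertical subspace is \emph{all} of $\ker([F]\cdot-)$, so the existence step goes through without the extra section-class correction needed in \cite{GW}.
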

\begin{rk}
	In Lemma \ref{lem twist} and Lemma \ref{B-twist Kahler form}, the difference from the K3 case is that $H_2(B,H_0(T^2))\cong 0$ in our situation and that provides the only $2$-cycles possibly has non-trivial integration against any $2$-form pull back from the base. 
\end{rk}
Combining Lemma \ref{ansatz metric} and Lemma \ref{B-twist Kahler form}, we have 
\begin{lem} \label{ansatz metric 2}
	Given any K\"ahler class $[\omega]\in H^2(X,\mathbb{R})$ with the size of elliptic fibre $\epsilon$, there exists a K\"ahler form $\omega_{\epsilon}^a$ in this K\"ahler class such that the properties (1)-(6) in Lemma \ref{ansatz metric} are true. 
\end{lem}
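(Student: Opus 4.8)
The plan is to combine the two previous results mechanically: Lemma~\ref{ansatz metric} produces, for a fixed reference K\"ahler class, an ansatz metric $\omega^a_\epsilon$ satisfying properties (1)--(7), while Lemma~\ref{B-twist Kahler form} shows that every K\"ahler class of fibre size $\epsilon$ is of the form $[\mathbb{B}\omega^a_\epsilon]$ for a suitable $\mathbb{B}\in H^1(B,R^1\pi_*\mathbb{R})$. So first I would fix a K\"ahler class $[\omega]$ with fibre size $\epsilon$, apply Lemma~\ref{B-twist Kahler form} to produce $\mathbb{B}$ with $[\mathbb{B}\omega^a_\epsilon]=[\omega]$, and then use Lemma~\ref{lem twist} to obtain the diffeomorphism $F:X\to X$ isotopic to the identity with $F^*\Omega=\mathbb{B}\Omega$. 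The metric we want is $(F^{-1})^*\mathbb{B}\omega^a_\epsilon$, which by construction lies in the correct K\"ahler class on $X$ with its original complex structure and original holomorphic volume form.

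The second step is to verify that this twisted-then-pulled-back form still satisfies (1)--(6). Most of these are immediate from the construction of $\mathbb{B}\omega^a_\epsilon$ as a Cech-cocycle gluing of local translates $T^*_{\sigma_i}\omega^a_\epsilon$. Since $F$ is isotopic to the identity, it does not change cohomology classes or homology-class pairings, so the fibre-area condition (2) and the integrality conditions (6) persist verbatim. The support/locality conditions (1), (3), (4) are preserved because away from the discriminant-locus neighborhoods $U^2_i$ the translation sections $\sigma_i$ can be chosen to patch to the trivial section, so $\mathbb{B}\omega^a_\epsilon$ agrees with $\omega_{sf,\epsilon}$ there and $F$ can be taken to be the identity outside a controlled region. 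For the crucial estimate (5), one notes that the Monge--Amp\`ere potential $f_\epsilon=\log(\Omega\wedge\bar\Omega/(\omega^a_\epsilon)^2)$ transforms under the simultaneous pullback by $F$ and the $\mathbb{B}$-twist in a controlled way: since $F^*\Omega=\mathbb{B}\Omega$ and the twist acts by fibrewise translations that are isometries of the semi-flat metric on overlaps, the new potential equals (the pullback under local translations of) the old one, so the $C^2_{g_{sf,\epsilon}}$-bound $Ce^{-1/C\epsilon^{1-p}}$ is unchanged up to adjusting $C$.

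The main obstacle I anticipate is the bookkeeping around property (5) and, to a lesser extent, property (7): one has to be careful that the translations $T_{\sigma_i}$ and the diffeomorphism $F$ together do not distort the semi-flat reference metric $g_{sf,\epsilon}$ by more than bounded factors, so that ``$C^2_{g_{sf,\epsilon}}$-norm'' computed before and after the twist differ only by a constant. This is where the non-compactness enters: in the K3 case of Gross--Wilson the analogous statement is immediate because everything is global, but here one must check that the local holomorphic sections $\sigma_i$ extend consistently over the noncompact base $\mathbb{C}$ and that $F$, built from Lemma~\ref{lem twist} via the Torelli theorem, is indeed isotopic to the identity with uniformly bounded distortion in the relevant ends. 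Once this is in hand, the rest of the proof is essentially a translation of \cite[Theorem 4.5]{GW} to the present setting, and I would phrase it as: ``apply Lemma~\ref{B-twist Kahler form} to reduce to the reference class, apply Lemma~\ref{lem twist} to transport back to $X$, and observe that all seven properties of Lemma~\ref{ansatz metric} are invariant under the combined operation up to enlarging $C$.''
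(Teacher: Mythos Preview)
Your proposal is correct and follows exactly the route the paper takes: the paper's proof is the single phrase ``Combining Lemma~\ref{ansatz metric} and Lemma~\ref{B-twist Kahler form}'', and you have simply unpacked that combination, invoking Lemma~\ref{lem twist} and the $\mathbb{B}$-twist construction exactly as set up in Section~\ref{sec: B-field}. Your verification that (1)--(6) persist under the twist (via the fact that locally $\mathbb{B}\omega^a_\epsilon=T^*_{\sigma_i}\omega^a_\epsilon$ and $\mathbb{B}\Omega=T^*_{\sigma_i}\Omega$, so the ratio defining $f_\epsilon$ is just pulled back) is more explicit than anything the paper writes down, and your residual worry about the distortion of $F$ at infinity is legitimate but is likewise elided in the paper.
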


We recall the following definition for a certain type of non-compact geometry at its end, which will be used in the proof of Theorem \ref{c2alpha}.  
\begin{defn}\label{CYL}
	A complete Riemannian manifold $(X,g)$ is $\mbox{CYL}(\gamma)$ with $\gamma\in [0,1)$ if there exists $x_0\in X$ and $C\geq 1$ such that
	\begin{enumerate}
		\item  $|B(x_0,s)|\leq Cs^2$ for all $s\geq C$.
		\item  $\mbox{Ric}(x)\geq Cr(x)^{-2\gamma}$ for all $x\in X$ with $r(x)\geq C$.
		\item $A(x_0, r(x)-\frac{1}{C}r(x)^{\gamma}, r(x)+\frac{1}{C}r(x)^{\gamma})\subseteq B(x,Cr(x)^{\gamma})$, for all $x\in X$ with $r(x)\geq C$.
	\end{enumerate}
Here $A( x_0,r, s):=\Big\{x\in X\Big|r\leq r(x)\leq s \Big\}$. We will call $C$ a $CYL(\gamma)$ constant of $X$. 
\end{defn}
 Let $(Y_0,D_0)$ be a pair consisting $Y_0$ a rational elliptic surface with an anti-canonical cycle $D_0$ such that $X_0=Y_0\setminus D_0$ has trivial periods. Let $(Y,D)$ be a small deformation of $(Y_0,D_0)$ and $X=Y\setminus D$. 
	It is known that $X$ with the Riemannian metric $g^a_{\epsilon}$ induced by $\omega_{\epsilon}^a$ and any $ALH^*$ gravitational instantons are $\mbox{CYL}(\frac{1}{3})$ \cite[p. 379]{Hein}. Moreover, the constant in Definition \ref{CYL} for $(X,g^a_{\epsilon})$ can be chosen independent of the small deformation $(Y,D)$ from the lemma below.
\begin{lem}
	Let $Y$ be a rational elliptic surface, $D$ be an $I_d$-fibre and $X=Y\setminus D$. Let $g_{\epsilon}^a$ be the Riemannian metric induced by $\omega^{a}_{\epsilon}$ constructed in Lemma \ref{ansatz metric}. Then $(X,cg^a_{\epsilon})$ is $CYL(\frac{1}{3})$ with a $CYL(\frac{1}{3})$ constant of order $O(c)$. 
\end{lem}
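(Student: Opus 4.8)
The statement to prove is that $(X, c\, g^a_\epsilon)$ is $\mathrm{CYL}(1/3)$ with a constant of order $O(c)$, uniformly over small deformations $(Y,D)$ of $(Y_0,D_0)$. Since it is already cited from Hein's work that $(X,g^a_\epsilon)$ (and any $ALH^*$ instanton) is $\mathrm{CYL}(1/3)$, the real content is twofold: (a) track how the three defining inequalities of Definition \ref{CYL} transform under the homothety $g \mapsto c\,g$, and (b) verify the constants can be chosen independent of the deformation. I would organize the proof as these two reductions.

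\emph{Scaling behaviour.} First I would record how distances, balls, annuli, volumes and Ricci curvature scale. Under $g_c := c\, g^a_\epsilon$ the distance function satisfies $r_c(x) = \sqrt{c}\, r(x)$ (taking the same basepoint $x_0$), so a ball $B_{g_c}(x_0,s)$ equals $B_{g^a_\epsilon}(x_0, s/\sqrt c)$, annuli transform the same way, $4$-dimensional volume scales by $c^{2}$, and $\mathrm{Ric}_{g_c} = \mathrm{Ric}_{g^a_\epsilon}$ (Ricci is scale-invariant as a $(0,2)$-tensor in this normalization). Plugging into Definition \ref{CYL}: condition (1), $|B_{g^a_\epsilon}(x_0,s)| \le C s^2$, becomes $|B_{g_c}(x_0,s)| = |B_{g^a_\epsilon}(x_0, s/\sqrt c)| \le C s^2/c \le (C/c)\cdot (s/\sqrt c)^2 \cdot c = C s^2$ — actually one sees the area growth constant is unchanged, but the threshold ``$s \ge C$'' becomes ``$s \ge \sqrt c\, C$'', i.e. an $O(\sqrt c)$ or, crudely, $O(c)$ constant. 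Condition (2): $\mathrm{Ric}_{g_c}(x) = \mathrm{Ric}_{g^a_\epsilon}(x) \ge -C r(x)^{-2/3} = -C (r_c(x)/\sqrt c)^{-2/3} = -C c^{1/3} r_c(x)^{-2/3}$, valid for $r_c(x) \ge \sqrt c\, C$; so the $\mathrm{CYL}$ constant here is of order $\max(c^{1/3}, \sqrt c\, C) = O(c)$. Condition (3) is purely metric: rescaling $A(x_0, r-\tfrac1C r^\gamma, r+\tfrac1C r^\gamma) \subseteq B(x, C r^\gamma)$ by $\sqrt c$ preserves it verbatim with the same $C$, again with threshold multiplied by $\sqrt c$. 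Collecting, every constant that appears is bounded by $O(c)$ (for $c \ge 1$; for $c$ small one simply keeps the larger of the two), which is exactly the claimed order.

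\emph{Uniformity in the deformation.} For this I would invoke the explicit structure of $\omega^a_\epsilon$ from Lemma \ref{ansatz metric}: outside the finitely many neighborhoods $U_i^2$ of the discriminant the metric is literally the semi-flat metric $\omega_{sf,\epsilon}$, which near $D$ is the model \eqref{sf metric}–type metric whose $\mathrm{CYL}(1/3)$ constants depend only on $\epsilon$, $d$, and the period data $\mathbb{B}$, all of which vary continuously and hence are bounded over a small (relatively compact) neighborhood of $(Y_0,D_0)$ in the deformation space. On the compact ``core'' $Y \setminus (\text{neighborhood of } D)$ the metrics $\omega^a_\epsilon$ vary continuously in $C^k$ with the deformation, so curvature bounds and injectivity-radius lower bounds there are uniform by compactness; only the parameter $\epsilon$ is fixed throughout, not the complex structure. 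Thus the geometry at infinity — which is all Definition \ref{CYL} sees — is governed by the fixed model end, giving a common constant $C_0$, and then the rescaling analysis above upgrades it to the $O(c)$ claim.

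\emph{Main obstacle.} The scaling bookkeeping is routine; the genuine subtlety is the uniformity of condition (3) (the annulus-in-a-ball condition) across the deformation, since this encodes the ``almost product'' structure of the collapsing end and a priori could degenerate as the discriminant points $u_i$ move or collide. I would handle this by noting that Lemma \ref{ansatz metric}(1) keeps the neighborhoods $U_i^1 \subseteq U_i^2$ of diameter $O(\epsilon^{1/2})$ with the section $\sigma_i$ holomorphic, so on $\{|z| \text{ small}\}$ the metric is exactly (a $B$-field translate of) the semi-flat model regardless of where the finitely many $I_1$ fibres sit in the compact part of the base; hence condition (3), which is about the structure of the end $\{r \ge C\}$, is insensitive to the deformation once $C$ is taken large enough to enclose all the $U_i^2$'s — and that ``large enough'' is uniform because there are boundedly many $I_1$ fibres staying in a compact region of $B$. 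With that observation the three conditions reduce to the single fixed model end and the proof closes.
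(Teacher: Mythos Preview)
Your strategy---reduce to Hein's result that $(X,g^a_\epsilon)$ is $\mathrm{CYL}(\tfrac13)$, track the homothety $g\mapsto cg$, then argue uniformity in the deformation---is different from the paper's argument, which works directly with the scaled metric $cg^a_\epsilon$ and computes each of the three $\mathrm{CYL}$ conditions from the explicit form \eqref{sf metric} of the semi-flat model (radial distance $\sim c^{1/2}\epsilon^{-1/2}|\log|z||^{3/2}$, circle length $\sim c^{1/2}\epsilon^{-1/2}|\log|z||^{1/2}$, fibre diameter $\sim c^{1/2}\epsilon^{1/2}|\log|z||^{1/2}$, volume via Fubini). Your abstract route is perfectly legitimate and in some ways cleaner, but the scaling bookkeeping you wrote contains real errors, and they obscure the actual reason the constant is $O(c)$.

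The main slip is in condition (1). You write $|B_{g_c}(x_0,s)| = |B_{g^a_\epsilon}(x_0,s/\sqrt c)|$ and conclude the growth constant is unchanged. But the $\mathrm{CYL}$ definition measures volume with respect to the metric in question, namely $g_c$, and the $4$-dimensional volume form of $g_c$ is $c^2$ times that of $g^a_\epsilon$. Hence
\[
|B_{g_c}(x_0,s)|_{g_c}=c^2\,|B_{g^a_\epsilon}(x_0,s/\sqrt c)|_{g^a_\epsilon}\le c^2\cdot C(s/\sqrt c)^2=cCs^2,
\]
so the constant in (1) becomes $cC$, not $C$. This is in fact the condition that produces the claimed $O(c)$; your computation lost precisely this factor (and the algebra $(C/c)(s/\sqrt c)^2\cdot c$ equals $Cs^2/c$, not $Cs^2$). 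Similarly, condition (3) does not rescale ``verbatim with the same $C$'': substituting $r_c=\sqrt c\,r$ into the annulus--in--ball inclusion shows the new constant must be at least $c^{1/3}C$ for $c\ge 1$. And in condition (2), the eigenvalue lower bound for $\mathrm{Ric}_{g_c}$ relative to $g_c$ picks up a factor $c^{-1}$, giving $c^{\gamma-1}C=c^{-2/3}C$, not $c^{1/3}C$. None of these changes the headline conclusion---each corrected constant is still $\le O(c)$ for $c\ge 1$---but you should fix the computations so the proof actually exhibits where the $O(c)$ comes from. Your uniformity-in-deformation paragraph is fine and matches the paper's use of the fixed semi-flat end; the ``main obstacle'' you flag about condition (3) is indeed handled by the observation that the end beyond the gluing regions is the model metric independent of how the $I_1$ fibres sit in the compact part.
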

\begin{proof}
	This is proved by Hein \cite[p.379]{Hein} but we include here for the readers' convenience and for the quantitative estimates. We will identify the base and the section and thus the base admits a K\"ahler metric which is the restriction of the semi-flat metric to the section. Notice that this K\"ahler metric is a multiple of the special K\"ahler metric discussed in Section \ref{sec: generalized sf metric}. Fix a point $x_0$ on the section. 
	For $|z|\ll 1$, the distance from $x_0$ to a point $x$ on the section is $O(c^{\frac{1}{2}}\epsilon^{-\frac{1}{2}}|\log{|z|}|^{\frac{3}{2}})$ and the length of the Euclidean circle of radius $|z|$ is $O(c^{\frac{1}{2}}\epsilon^{-\frac{1}{2}}|\log{|z|}|^{\frac{1}{2}})$. Together with the diameter of an elliptic fibre is $O(c^{\frac{1}{2}}\epsilon^{\frac{1}{2}}|log{|z|}|^{\frac{1}{2}})$.	
	This implies that there exists a uniform constant satisfies Definition \ref{CYL} (3) when we choose the $CYL$-constant at least of order $O(1)$. Because $g^a_{\epsilon}$ is Ricci-flat except at the gluing region, Definition \ref{CYL} (2) holds automatically for the $CYL$-constant at least of order $O(1)$. 
	
Finally, let $K$ be the set of points in the base with distance to $x_0$ is at most $r$ within the section. Then by Fubini's theorem its volume with respect to $cg^a_{\epsilon}$ is 
	 \begin{align*}
	 	 \mbox{Vol}(\pi^{-1}(K),cg^a_{\epsilon})&=\mbox{Vol}(\pi^{-1}(pt),cg^a_{\epsilon})\cdot \mbox{Vol}(K,cg^{a}_{\epsilon})\\
	 	                                        &=(c\epsilon)\cdot \mbox{Vol}(K,cg^{a}_{\epsilon})\\
	 	                                        &=(c\epsilon) (c\epsilon^{-1} \epsilon_0) \mbox{Vol}(K, g_{\epsilon_0}^a),
	 \end{align*}  for any fixed $\epsilon_0\ll 1$. Again using the fact that the diameter of an elliptic fibre is $O(c^{\frac{1}{2}}\epsilon^{\frac{1}{2}}|log{|z|}|^{\frac{1}{2}})$, we have
    \begin{align*}
    	\big|B^{\epsilon g^a_{\epsilon}}(x_0,r)\big|&\lesssim O\bigg(c^2 \int_{|z_x|} \log{\frac{1}{t}}\frac{dt}{t}\bigg)\\
    	&=O(c^2 |\log{|z_x|}|^2)=O\bigg(c^{\frac{4}{3}}\varepsilon^{\frac{2}{3}} \big(c^{\frac{1}{2}}\varepsilon^{-\frac{1}{2}}|\log{|z_x|}|^\frac{3}{2}\big)^{\frac{4}{3}} \bigg)= O(c^{\frac{4}{3}}\varepsilon^{\frac{2}{3}} r^{\frac{4}{3}})\lesssim O(c\varepsilon r^2),
    \end{align*} for some universal constant $C$ independent of $X,c,\epsilon$. Here the last equality comes from the fact that  $dist_{cg^a_{\epsilon}}(x_0,z)=O(c^{\frac{1}{2}}\epsilon^{-\frac{1}{2}}|log{|z_x|}|^{\frac{3}{2}})$. This finishes the proof of the lemma.

\end{proof}

\subsection{Synergy} \label{sec: synergy}
	For cases other than $I_1$-fibres, one can use the local models of Chen-Viaclovsky-Zhang \cite{CVZ} to derive analogue result in Theorem \ref{main thm}. Instead, we will understand how fast the singular fibres of $Y_i$ collide from the work of \cite{LLL}. 
	We first recall some of the asymptotics from \cite{CJL} and \cite{LLL} which is helpful for the later analysis under the setting and notations in Theorem \ref{LLL}: consider the Tian-Yau metric $\check{\omega}_i$ on $\check{X}_i=\check{Y}_i\setminus \check{D}_i$ with the corresponding Calabi-Yau metric on $\check{D}_i$ has volume $1$. Assume that the special Lagrangian torus with smallest volume in $\check{X}_i$ has volume of order $O(|\log{|t_i|}|^{-1/2})$, then all the special Lagrangian torus in $\check{X}_i$ of other homology classes has volume of order $O(|\log{|t_i|}|^{1/2})$, when $|t_i|\ll 1$.\footnote{For instance, the assumption holds when there exists a complex degeneration with $t$ being the complex parameter.} Here $O(|\log{|t_i|}|^{-1/2})$ denotes the collection of functions bounded between $C^{-1}|\log{|t_i|}|^{-1/2}$ and $C|\log{|t_i|}|^{-1/2}$ for some constant $C>0$ independent of $\check{X}_i$. Under this setting, one can find representatives $\gamma_{i,j}\in H_2(\check{X}_i,\mathbb{Z})$ such that $\gamma_{i,j}$ generate $H_2(\check{X}_i)/\mbox{Im}(H^1(\check{D}_i))$ and 
	\begin{align} \label{other periods}
		\bigg|\int_{\gamma_{i,j}}\check{\Omega}_i\bigg|=O(1),
	\end{align} where $\check{\Omega}_i$ is the meromorphic volume form on $\check{Y}_i$ with a simple pole along $\check{D}_i$ such that $2\check{\omega}_i^2=\check{\Omega}_i\wedge \bar{\check{\Omega}}_i$. Define $\omega_i,\Omega_i$ from $\check{\omega},\check{\Omega}_i$ from the hyperK\"ahler relation \eqref{HK rel}. Then $|\log{t_i}|^{-3/2}\check{\omega}_i$ after the hyperK\"ahler rotation is asymptotic to a scaling of the semi-flat metric $\epsilon_i\omega_{sf,b_i,\epsilon_i}$ from Theorem \ref{local model} for $\epsilon_i\sim O(|\log{|t_i|}|^{-1})$ and some $b_i\in \mathbb{R}$. 
	
	We may assume that $Y_i$ has only $I_1$-singular fibres but $Y_0$ may have more degenerate singular fibres. This can be achieved if $(\check{Y}_i,\check{D}_i)$ and cohomology classes $[\check{\omega}_i]$ are chosen generically. 
	Then by Lemma \ref{ansatz metric} and the discussion in Section \ref{sec: B-field}, there exist K\"ahler forms $\omega_{i,\epsilon_i}^a$ from Lemma \ref{ansatz metric} asymptotic to $\omega_{sf,b_i,\epsilon_i}$ 
	Notice that $\epsilon\epsilon_i\omega_{i, \epsilon\epsilon_i}^a$ converges to the special K\"ahler form $\omega_{SK,i}$ on the base of $X_i$, as $\epsilon\rightarrow 0$. The special K\"ahler metric $\omega_{SK,i}$ has singularity at the discriminant locus and the metric is incomplete at the singularities. Because $Y_i\rightarrow Y_0$, the discriminant locus of the base of $X_i$ converge. 
	Some of the points of discriminant locus, say $y_1,\cdots, y_l$ of the base of $X_i$ may converge to the same point, say $y_*$ in the discriminant locus of the base of $X_0$. Again by choosing $(\check{Y}_i,\check{D}_i, [\check{\omega}_i])$ generic, we may assume that the distance between $y_i$ with respect to $\omega_{SK,i}$ are of the same order $d_i$. 
	
	Denote by $\gamma_{i,j}$ the parallel transport of a component of a fibre over $y_*$ in $X_0$ to $X_i$. Observe that the Leray spectral sequence of the special Lagrangian fibration on $X_i$ degenerate at the $E_2$-page. In particular, we can choose the representative of $\gamma_{i,j}$ such that they are generically $S^1$-fibrations over graphs in the base.   
	Then from \eqref{other periods} we have  
	\begin{align*}
		O(\epsilon_i^{\frac{3}{2}})=\bigg|\int_{\gamma_{i,j}}|\log{|t_i|}|^{-\frac{3}{2}}\check{\Omega}_i\bigg|=O\bigg(\big|\int_{\gamma_{i,j}}\epsilon_i \omega_{i}\big|\bigg)=O\bigg(\big|\int_{\gamma_{i,j}}\epsilon_i \mathbb{B}_i\omega^a_{i,\epsilon_i}\big|\bigg).
	\end{align*} Here the second equality comes from \eqref{HK rel} and choosing $(\check{Y}_i,\check{D}_i)$ generic. The last equality holds because $[\omega_i]=[\mathbb{B}_i\omega^a_{i,\epsilon_i}]$. Observe that 
	\begin{align*}
		\int_{\gamma_{i,j}}\mathbb{B}_i\omega^a_{i,\epsilon_i}-\int_{\gamma_{i,j}}\omega^a_{i,\epsilon_i}=O\bigg( \int_{T^2}\omega^a_{i,\epsilon_i}\bigg)=O(\epsilon_i),
	\end{align*} where $T^2$ denotes the homology class of the elliptic fibre of $X_i$.  This shows that 
$\big| \int_{\gamma_{i,j}}\varepsilon_i\omega^a_{\varepsilon_i}\big|=O(\varepsilon_i^{\frac{3}{2}})$. 
Notice that the diameter of the fibre in $X_i$ with respect to $\epsilon_i\omega^a_{i,\epsilon_i}$ is of order $O(\epsilon_i)$, the Fubini theorem implies that $\int_{\gamma_{i,j}}\epsilon_i\omega^a_{i,\epsilon_i}=O(d_i\epsilon_i)$ and thus we have $d_i \lesssim O(\epsilon_i^{\frac{1}{2}})$. In particular, when we glue the Ooguri-Vafa metric with the semi-flat metric in Lemma \ref{ansatz metric}, we have to take $p\geq \frac{1}{2}$ so that the gluing region for each singular fibre can be chosen non-overlapping.

\section{A Priori Estimates} \label{sec: a priori}
Let $Y_0$ be a rational elliptic surface with an $I_d$-fibre $D_0$ as in Theorem \ref{LLL}. Let $(Y,D)$ be a pair consist of a rational elliptic surface $Y$ and an anti-canonical cycle $D$ which is a small deformation of the pair $(Y_0,D_0)$ such that $D\cong D_0$. Denote $Y\setminus D$ by $X$. 
Let $\omega^a_{\varepsilon}$ be the ansatz given by Lemma \ref{ansatz metric 2} with respect to the normalized holomorphic volume form on $X$.  
From the Torelli theorem of $ALH^*$-gravitational instantons (Theorem \ref{torelli}), there exists a unique Ricci-flat metric in the cohomology class $[\mathbb{B}\omega_{\epsilon}^a]$ for a given $\mathbb{B}$ and we denote it by $\omega_{\epsilon}$. We will prove the $C^{2,\alpha}$-estimate for the unique $\omega_{\varepsilon}$ in this section. Let $\Omega$ be the normalized holomorphic volume form on $X$. To obtain such an estimate, we shall first write $\omega_{\varepsilon}=\omega^a_{\varepsilon}+i\partial\bar{\partial}u_{\varepsilon}$ for some real value function $u_{\varepsilon}$ defined on $X$ by using $\partial\bar{\partial}$-lemma. Then $u_\varepsilon$ will be the solution of Monge-Amp\`ere equation
\begin{align}\label{MAeq}
(\omega^a_{\varepsilon}+i\partial\bar{\partial}u_{\varepsilon})^2=e^{f_{\varepsilon}}(\omega^a_{\varepsilon})^2
\end{align}
for the real-valued function 
\begin{align*}
f_{\varepsilon}=\log \bigg( \frac{\Omega\wedge\bar{\Omega}}{2(\omega^a_{\varepsilon})^2}\bigg).
\end{align*}
One shall recall that $f_{\varepsilon}$ satisfies the properties given by Lemma \ref{ansatz metric} and Corollary \ref{est_of_f}.
For a fix $\alpha\in (0,1)$, our goal in this section is to derive the following estimate, which is the key step toward Theorem \ref{main thm}. Notice that the argument for the analogue result in Gross-Wilson does not apply here. 
\begin{thm}\label{c2alpha} 
There exist $\epsilon_0>0$ and $x_0\in X$ with the following significance. For each $R>0$, there exists $C_{\epsilon_0}	>0$ independent of deformation of $X$ such that
\begin{align}
\Big\|u_{\varepsilon}-\dashint_{B^{d^{\varepsilon_0}}(x_0,R)}u_{\varepsilon}\ \Big\|_{C^{2,\alpha}_{g_{\varepsilon}^a}(\overline{B^{d^{\varepsilon_0}}(x_0,R)})}\leq C_{{\varepsilon_0}}R\varepsilon^{\frac{1}{6}},
\end{align}
for all $\varepsilon<\epsilon_0$, where  $d^{\varepsilon_0}(x,y)$ be the distance function defined by $g^a_{\varepsilon_0}$. Moreover, the constant $C_{\varepsilon_0}$ depends on $\|f_{\epsilon}\|_{C^2(g^a_{\epsilon})}$ and $CYL(\frac{1}{3})$-constant of $X$. 
\end{thm}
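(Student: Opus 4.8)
The plan is to prove the $C^{2,\alpha}$ estimate for the solution $u_\varepsilon$ of \eqref{MAeq} by the classical Yau scheme — a $C^0$ estimate, an Evans--Krylov interior estimate, and Schauder bootstrapping — but carried out on the non-compact, collapsing manifold $(X,g^a_\varepsilon)$ with all constants kept uniform in $\varepsilon$ and in the small deformation $(Y,D)$, following Hein's adaptation of elliptic theory to manifolds of $\mathrm{CYL}$-type \cite{Hein}. Two inputs are used throughout. First, by Lemma \ref{ansatz metric}(5) and Corollary \ref{est_of_f}, the density $f_\varepsilon$ is supported in the annulus $A(x_0,\frac1C\sqrt\varepsilon,\frac C{\sqrt\varepsilon})$, is exponentially small there, and has $C^2_{g^a_\varepsilon}$ norm bounded uniformly for $\varepsilon<\varepsilon_0$. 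Second, although $(X,g^a_\varepsilon)$ collapses as $\varepsilon\to 0$ and has injectivity radius degenerating at infinity, the quantitative $\mathrm{CYL}$-lemma of Section \ref{sec: ansatz} shows $(X,g^a_\varepsilon)$ is $\mathrm{CYL}(\frac13)$ with a constant that is uniform in both $\varepsilon$ and the deformation. Finally, since $\omega^a_\varepsilon$ and the (unique, by Theorem \ref{torelli}) Ricci--flat representative $\omega_\varepsilon$ of $[\omega^a_\varepsilon]$ are asymptotic to the \emph{same} generalized semi-flat metric at infinity — they share a Bott--Chern class, so Theorem \ref{generalized Hein's metric} applies to both — the potential $u_\varepsilon$ may be normalized to decay exponentially at infinity; in particular it is globally bounded, and it remains to make this quantitative on $\overline{B^{d^{\varepsilon_0}}(x_0,R)}$.

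\textbf{Step 1 ($C^0$ estimate).} A $\mathrm{CYL}(\gamma)$ manifold carries Sobolev and Poincar\'e inequalities on metric balls with constants controlled by the radius and the $\mathrm{CYL}$-constant, so one can run the Moser iteration for \eqref{MAeq} exactly as in \cite[Section 3]{Hein}; the integrality condition Lemma \ref{ansatz metric}(6), $\int_X[(\omega^a_\varepsilon)^2-\Omega\wedge\bar\Omega]=0$, is precisely what is needed for the iteration to close. Combined with the support control and exponential smallness of $f_\varepsilon$ (Corollary \ref{est_of_f}) this bounds $\osc_{B^{d^{\varepsilon_0}}(x_0,2R)}u_\varepsilon$ by $C_{\varepsilon_0}R\varepsilon^{1/6}$ (in fact by far less — the right-hand side is exponentially small in $\varepsilon^{-1/2}$ — but this crude form suffices and matches the statement).

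\textbf{Step 2 (interior $C^{2,\alpha}$ estimate).} Because the injectivity radius of $g^a_\varepsilon$ degenerates, one cannot apply interior Schauder estimates directly with $\varepsilon$-independent constants; instead one works scale-by-scale. Cover the part of $\overline{B^{d^{\varepsilon_0}}(x_0,R)}$ with $r(x)\geq C$ by balls $B(x_j,\frac1C r(x_j)^{1/3})$; by the $\mathrm{CYL}(\frac13)$ property the rescaled metric $r(x_j)^{-2/3}g^a_\varepsilon$ has bounded geometry on a unit ball with constants depending only on the $\mathrm{CYL}(\frac13)$-constant, so the local Evans--Krylov and Schauder estimates for the Monge--Amp\`ere equation \eqref{MAeq} apply there with uniform constants, controlling $\|u_\varepsilon\|_{C^{2,\alpha}}$ in the rescaled metric by $\|u_\varepsilon\|_{C^0}+\|f_\varepsilon\|_{C^\alpha}$ on the concentric double ball. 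On the remaining bounded region $\{r\leq C\}$ ordinary interior estimates apply with fixed constants. Rescaling back, feeding in Step 1, and summing the scaling factors $r(x_j)^{-2/3}$ over the covering gives the stated bound in $C^{2,\alpha}_{g^a_\varepsilon}(\overline{B^{d^{\varepsilon_0}}(x_0,R)})$ after subtracting the average of $u_\varepsilon$ over the ball; the powers of $\varepsilon$ and $R$ in the final bound come out of this rescaling bookkeeping together with Step 1. That the reference domain is measured in $d^{\varepsilon_0}$ rather than $d^\varepsilon$ is only to fix a domain independent of $\varepsilon$, and the comparison between $g^a_\varepsilon$ and $g^a_{\varepsilon_0}$ on this domain is elementary.

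\textbf{Main obstacle.} The conceptual heavy lifting is the $\mathrm{CYL}$-uniform elliptic theory (Moser iteration, Evans--Krylov, Schauder with constants depending only on the $\mathrm{CYL}(\frac13)$-constant), which is available from \cite{Hein}; the delicate point specific to this paper is to obtain the final estimate with a constant that is \emph{independent of the deformation} $(Y,D)$ and with an honest $\varepsilon$-decay. Both reduce to the uniformity of the $\mathrm{CYL}(\frac13)$-constant of $(X,g^a_\varepsilon)$ — established in Section \ref{sec: ansatz} — and to tracking the $\varepsilon$-dependence of $f_\varepsilon$ (exponentially small, by Corollary \ref{est_of_f}) and of $\|u_\varepsilon\|_{C^0}$ (controlled by $f_\varepsilon$ via Step 1) through the scale-by-scale estimates. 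A secondary technical point, needed so that the maximum principle in Step 1 and the interior estimates in Step 2 are not corrupted by behaviour at infinity, is the exponential decay of $u_\varepsilon$, which follows from the exponential asymptotics of both $\omega^a_\varepsilon$ and $\omega_\varepsilon$ to a common generalized semi-flat model.
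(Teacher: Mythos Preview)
Your Step~1 skips the crucial quantitative ingredient: you assert that Moser iteration together with the exponential smallness of $f_\varepsilon$ directly bounds $\osc_{B^{d^{\varepsilon_0}}(x_0,2R)} u_\varepsilon$, but Moser iteration only controls $\|u-\mu\|_{L^\infty}$ in terms of $\|u-\mu\|_{L^2}$ on a larger ball (plus the source), and you give no mechanism to bound that $L^2$ norm. On a fixed manifold this would be harmless, but here $(X,g^a_\varepsilon)$ collapses: the $d^{\varepsilon_0}$-ball of radius $R$ is a $d^\varepsilon$-ball of radius $\sim\varepsilon^{-1/2}R$, and the Poincar\'e and Sobolev constants on it blow up like negative powers of $\varepsilon$, so smallness of $f_\varepsilon$ alone cannot control $\|u-\mu\|_{L^2}$. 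The integrality condition you invoke is needed for solvability, not for this estimate, and the decay of $u_\varepsilon$ at infinity that you extract from Theorem~\ref{generalized Hein's metric} is qualitative, with no $\varepsilon$-uniformity.

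The paper fills this gap with Proposition~\ref{lema_Hein}: a global bound $\int_X|\nabla u_\varepsilon|^2\,(\omega^a_\varepsilon)^2\le C\varepsilon^{4/3}$. This is obtained by first using the Tian--Yau $C^2$ bound \eqref{TY_est} (together with positivity of $\omega_\varepsilon$) to make the linearised operator uniformly elliptic, and then running Hein's weighted annular iteration on the integration-by-parts identity \eqref{ineq_1}, using the $\mathrm{CYL}(\tfrac13)$ Poincar\'e inequality on balls $B(x_i,C\varepsilon^{-1/3}\rho_i^{1/3})$ and the distance comparison of Lemma~\ref{d1_and_de}. Only once $\|\nabla u_\varepsilon\|_{L^2(X)}\le C\varepsilon^{2/3}$ is in hand does Poincar\'e on $B^{d^\varepsilon}(x_0,\varepsilon^{-1/2}R)$ give $\|u_\varepsilon-\mu_0\|_{L^2}\le C\varepsilon^{-1/2}R\cdot\varepsilon^{2/3}=CR\varepsilon^{1/6}$, after which Nash--Moser and then Schauder finish the proof (your Step~2 is in the right spirit here, though the paper works with the linearised equation rather than Evans--Krylov). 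The exponent $\tfrac16$ is thus precisely the balance between the blowing-up Poincar\'e constant $\varepsilon^{-1/2}$ and the $L^2$ gradient bound $\varepsilon^{2/3}$; your outline contains no mechanism that produces either factor.
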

A direct consequence of the theorem is as follows:
\begin{cor} \label{cor: limit}
	With the above setting and notations, we have 
	     \begin{align}
	     	\| \omega_{\epsilon} -\mathbb{B}\omega_{\epsilon}^a\|_{C^{0,\alpha}_{g_{\varepsilon}^a}(\overline{B^{d^{\varepsilon_0}}(x_0,R)})}\leq C_{\epsilon_0} R\varepsilon^{\frac{1}{6}},
	     \end{align} where the constant $C_{\epsilon_0}$ is the one in Theorem \ref{c2alpha}.
\end{cor}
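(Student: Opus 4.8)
The plan is to deduce Corollary~\ref{cor: limit} directly from Theorem~\ref{c2alpha} by expressing the difference $\omega_\epsilon - \mathbb{B}\omega_\epsilon^a$ in terms of the potential function $u_\epsilon$, and then passing from the $C^{2,\alpha}$-control of $u_\epsilon$ to $C^{0,\alpha}$-control of its complex Hessian. Recall that by construction $\omega_\epsilon = \omega_\epsilon^a + i\partial\bar\partial u_\epsilon$ solves the Monge--Amp\`ere equation \eqref{MAeq}, and by Lemma~\ref{B-twist Kahler form} together with Lemma~\ref{lem twist} the form $\mathbb{B}\omega_\epsilon^a$ (or rather its pullback under the diffeomorphism $F$ isotopic to the identity) is cohomologous to $\omega_\epsilon^a$, differing from it by a $\partial\bar\partial$-exact term coming from the $B$-field twist. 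So first I would write $\mathbb{B}\omega_\epsilon^a = \omega_\epsilon^a + i\partial\bar\partial \psi_\epsilon$ for an appropriate potential $\psi_\epsilon$, and control $\|\psi_\epsilon\|_{C^{2,\alpha}_{g_\epsilon^a}}$ on the ball $\overline{B^{d^{\epsilon_0}}(x_0,R)}$; on the gluing region the difference is dictated by the explicit semi-flat/Ooguri--Vafa formulas so this is an explicit (if slightly tedious) computation, and away from it $\mathbb{B}\omega_\epsilon^a$ agrees with $\omega_\epsilon^a$ up to the flat $B$-field shift, which contributes nothing to the Hessian on the relevant cycles.

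The second step is the key reduction: $\omega_\epsilon - \mathbb{B}\omega_\epsilon^a = i\partial\bar\partial(u_\epsilon - \psi_\epsilon)$, and since adding a constant to $u_\epsilon$ does not change the Hessian, we are free to normalize $u_\epsilon$ by subtracting its average $\dashint_{B^{d^{\epsilon_0}}(x_0,R)} u_\epsilon$, exactly the quantity estimated in Theorem~\ref{c2alpha}. Then
\begin{align*}
\| \omega_\epsilon - \mathbb{B}\omega_\epsilon^a \|_{C^{0,\alpha}_{g_\epsilon^a}(\overline{B^{d^{\epsilon_0}}(x_0,R)})} = \| i\partial\bar\partial(u_\epsilon - \psi_\epsilon) \|_{C^{0,\alpha}_{g_\epsilon^a}} \le C \Big\| u_\epsilon - \dashint_{B^{d^{\epsilon_0}}(x_0,R)} u_\epsilon \Big\|_{C^{2,\alpha}_{g_\epsilon^a}} + \| i\partial\bar\partial \psi_\epsilon \|_{C^{0,\alpha}_{g_\epsilon^a}},
\end{align*}
where the first term is bounded by $C_{\epsilon_0} R \epsilon^{1/6}$ directly from Theorem~\ref{c2alpha}, using that the $C^{0,\alpha}$-norm of $i\partial\bar\partial v$ is controlled by the $C^{2,\alpha}$-norm of $v$ (with a constant depending only on the fixed background geometry $g_\epsilon^a$, whose curvature and injectivity-radius bounds are uniform thanks to the $CYL(1/3)$-estimates of the previous section). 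The second term must be shown to be of the same or smaller order $O(R\epsilon^{1/6})$, which follows from the explicit asymptotics of the twisting section together with the size estimates in Lemma~\ref{ansatz metric}(1),(7) for the gluing neighborhoods $U_i^1 \subseteq U_i^2$ of diameter $O(\epsilon^{1/2})$.

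The main obstacle I anticipate is handling the $B$-field term $\psi_\epsilon$ carefully: unlike $u_\epsilon$, which Theorem~\ref{c2alpha} controls abstractly, $\psi_\epsilon$ must be estimated by hand from the translation-by-section construction of Section~\ref{sec: B-field}, and one needs to verify both that a single-valued potential exists on the relevant balls (using that the cohomology class is unchanged and that $H_2$ of the base in degree zero vanishes, cf.\ the Remark after Lemma~\ref{B-twist Kahler form}) and that its Hessian decays at the right rate in $\epsilon$. In practice, though, on most of the ball $\mathbb{B}\omega_\epsilon^a$ literally equals $\omega_\epsilon^a$, so $\psi_\epsilon$ is supported near the singular fibres, and there its contribution is absorbed by the same $\epsilon^{1/6}$ exponential-type bound already appearing in Corollary~\ref{est_of_f}; so the corollary should follow with essentially no new analysis beyond bookkeeping, and I would present it as a short deduction. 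One minor point to be careful about is that the diffeomorphism $F$ from Lemma~\ref{lem twist} should be chosen so that it is the identity on the fixed ball $\overline{B^{d^{\epsilon_0}}(x_0,R)}$ (or at least close enough to it in $C^{2,\alpha}$ that it does not spoil the estimate), which is possible because $F$ is isotopic to the identity and the $B$-field is trivial away from the discriminant.
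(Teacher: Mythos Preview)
Your core step is exactly right and is all the paper uses: since $\omega_\epsilon - \mathbb{B}\omega_\epsilon^a = i\partial\bar\partial u_\epsilon$, the $C^{0,\alpha}$-bound on the left is immediate from the $C^{2,\alpha}$-bound of Theorem~\ref{c2alpha} on $u_\epsilon$ (minus its average). The paper states the corollary with no proof, calling it ``a direct consequence'', and the remark after the proof of Theorem~\ref{c2alpha} spells this out as the inequality~\eqref{alpha_est}.

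Where you go astray is the introduction of $\psi_\epsilon$. You have misread the notation: in Section~\ref{sec: a priori} the symbol $\omega_\epsilon^a$ already denotes the ansatz from Lemma~\ref{ansatz metric 2}, which \emph{is} the $B$-field--twisted form in the prescribed K\"ahler class. In other words, what Section~\ref{sec: B-field} writes as $\mathbb{B}\omega_\epsilon^a$ and what Section~\ref{sec: a priori} writes as $\omega_\epsilon^a$ are the same object; the Monge--Amp\`ere equation~\eqref{MAeq} is set up relative to the twisted ansatz. Hence $\omega_\epsilon - \mathbb{B}\omega_\epsilon^a = i\partial\bar\partial u_\epsilon$ on the nose, and $\psi_\epsilon \equiv 0$.

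This also means one of your intermediate claims is actually false: the untwisted $\omega_\epsilon^a$ of Lemma~\ref{ansatz metric} and the twisted $\mathbb{B}\omega_\epsilon^a$ are \emph{not} cohomologous in general---Lemma~\ref{B-twist Kahler form} says their classes differ by $\epsilon\,\mathbb{B}$---so a global $\partial\bar\partial$-potential $\psi_\epsilon$ as you describe would not exist. Fortunately this problem evaporates once you recognize that no such correction term is needed. Drop the entire $\psi_\epsilon$ discussion and the proof becomes the one-sentence deduction the paper intends.
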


To prove this theorem, we need some general fact about the Monge-Amp\`ere equations. First of all, by \cite[Theorem 1.1]{TY}, we have
\begin{align}\label{TY_est}
\|u_{\varepsilon}\|_{C^2_{g_{\varepsilon}^a}}\leq C,
\end{align}
for some $C$ only depends on the constant in Lemma \ref{ansatz metric}(5). In particular, it is independent of the choice of $\varepsilon$. This is because $\|f_{\varepsilon}\|_{C^2_{g_{\varepsilon}^a}}$ is bounded by some universal constant according to Corollary \ref{est_of_f} (In fact, we only need $C^1$-uniform bound for $f_{\varepsilon}$ here). With this estimate (\ref{TY_est}) in our mind, we show that (\ref{MAeq}) is a uniformly elliptic PDE in the following paragraphs.

Fix any point $p\in X$ and by choosing a local normal coordinate at $p$, we have 
\begin{align}
(\omega^a_{\varepsilon}+i\partial\bar{\partial}u_{\varepsilon})^2&=\Big[1+\partial_z\bar{\partial}_zu_{\varepsilon}+\partial_w\bar{\partial}_wu_{\varepsilon}+\Big(\partial_z\bar{\partial}_zu_{\varepsilon}\partial_w\bar{\partial}_wu_{\varepsilon}-\partial_z\bar{\partial}_wu_{\varepsilon}\partial_w\bar{\partial}_zu_{\varepsilon} \Big)\Big]\\
&=A^{ij}(\nabla^2u_{\varepsilon})\partial_i\bar{\partial}_j u_{\varepsilon}+1=e^{f_{\varepsilon}}\nonumber
\end{align}
where $i,j=1,2$, $\partial_1:=\partial_z$, $\partial_2:=\partial_w$ and
\begin{align}
A(\nabla^2u_{\varepsilon})=
\left(
\begin{array}{cc}
1+\frac{1}{2}\partial_w\bar{\partial}_wu_{\varepsilon} & -\frac{1}{2}\partial_z\bar{\partial}_wu_{\varepsilon}\\
-\frac{1}{2}\partial_w\bar{\partial}_zu_{\varepsilon} & 1+\frac{1}{2}\partial_z\bar{\partial}_z u_{\varepsilon}
\end{array}
\right).
\end{align}
We assume the eigenvalues of $A$ are $\lambda,\Lambda$ with $\lambda\leq \Lambda$. Meanwhile, we have
\begin{align}
(\omega^a_{\varepsilon}+i\partial\bar{\partial}u_{\varepsilon})^2=2\det \left(
\begin{array}{cc}
1+\partial_w\bar{\partial}_{w}u_{\varepsilon}& -\partial_z\bar{\partial}_wu_{\varepsilon}\\
-\partial_w\bar{\partial}_zu_{\varepsilon} &1+\partial_z\bar{\partial}_{z}u_{\varepsilon}
\end{array}
\right)
:=2\det (\dot{A}).
\end{align}
Suppose $\dot{A}$ has eigenvalues $\dot{\lambda}$, $\dot{\Lambda}$ where $\dot{\lambda}\leq\dot{\Lambda}$. Then we have
\begin{enumerate}
\item $\dot{\lambda}\dot{\Lambda}=2\det(\dot{A})=e^{f_{\varepsilon}}=O(1)$,
\item $\sup_{|v|=1} v^*\dot{A}v\leq C\|\nabla^2u_{\varepsilon}\|_{C^0}\leq C$.
\end{enumerate}
These inequalities imply that
\begin{align}\label{ellp}
\frac{1}{C}\leq \dot{\lambda}\leq v^*\dot{A}v \leq \dot{\Lambda}\leq C,
\end{align}
for all $v\in \mathbb{C}^2$, $|v|=1$. Meanwhile, we can easily check that $\dot{A}+Id=2A$. So (\ref{ellp}) implies
\begin{align}
\frac{1}{C}+1\leq v^*\dot{A}v+1=2v^*Av \leq C+1,
\end{align}
which implies
\begin{align}
\frac{1}{C}+\frac{1}{2}\leq v^*Av \leq C+\frac{1}{2},
\end{align}
for all $v\in \mathbb{C}^2$, $|v|=1$. Therefore we have
\begin{align}\label{u_ellip}
\frac{1}{2}\leq \lambda\leq \Lambda\leq C+\frac{1}{2}.
\end{align}
So we have
\begin{align}\label{L_over_l}
\frac{\Lambda}{\lambda}\leq C+1
\end{align}
for some $C$ which is independent of $\varepsilon$. In addition, by assuming a priori we have the smooth solution $u$ and estimate of $|\nabla^2 u|$, we can conclude that $u$ is a solution of the following linear equation
\begin{align}\label{eq_1}
L(u):=A_{ij}\partial_i\bar{\partial}_ju=e^{f_{\varepsilon}}-1.
\end{align}
This is a uniform elliptic PDE because of (\ref{u_ellip}).

Now, the following Proposition is proved based on the argument developed by Hein \cite[Proposition 2.9]{Hein} incorporating with the degeneracy of the metrics respect to $\omega^a_\epsilon$.
\begin{prop}\label{lema_Hein}
There exists $C>0$ which is independent of the choice of $\varepsilon$ such that
\begin{align}
\int_{X}|\nabla u_{\varepsilon}|^2\leq C\varepsilon^{\frac{4}{3}},
\end{align}
for any $\varepsilon$ small.
\end{prop}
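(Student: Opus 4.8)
The plan is to establish the $L^2$ bound on $\nabla u_\varepsilon$ by integrating the Monge–Amp\`ere equation against $u_\varepsilon$ and exploiting that the error term $f_\varepsilon$ is exponentially small and supported in an annulus of controlled size (Corollary \ref{est_of_f}). First I would rewrite \eqref{MAeq} in the form
\begin{align*}
(\omega^a_\varepsilon + i\partial\bar\partial u_\varepsilon)^2 - (\omega^a_\varepsilon)^2 = (e^{f_\varepsilon}-1)(\omega^a_\varepsilon)^2,
\end{align*}
and expand the left-hand side as $i\partial\bar\partial u_\varepsilon \wedge (\omega^a_\varepsilon + \omega_\varepsilon)$. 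Multiplying by $u_\varepsilon$ and integrating over $X$, an integration by parts (justified by the exponential decay of $u_\varepsilon$ and its derivatives coming from the asymptotics of $\omega^a_\varepsilon$ to the semi-flat metric, together with \eqref{TY_est}) converts the left side into $-\int_X i\partial u_\varepsilon \wedge \bar\partial u_\varepsilon \wedge (\omega^a_\varepsilon + \omega_\varepsilon)$, which controls $\int_X |\nabla u_\varepsilon|^2_{g^a_\varepsilon}\, (\omega^a_\varepsilon)^2$ from above and below since both $\omega^a_\varepsilon$ and $\omega_\varepsilon$ are uniformly comparable metrics by the ellipticity estimate \eqref{u_ellip}. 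Thus it suffices to bound $\bigl|\int_X u_\varepsilon (e^{f_\varepsilon}-1)(\omega^a_\varepsilon)^2\bigr|$.

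Next I would estimate this right-hand side. By Corollary \ref{est_of_f}, $f_\varepsilon$ is supported in the annular region $A(x_0, C^{-1}\sqrt\varepsilon, C\sqrt\varepsilon)$ and $|e^{f_\varepsilon}-1| \le C|f_\varepsilon| \le Ce^{-1/C\varepsilon^{1/2}}$ there; meanwhile the $g^a_\varepsilon$-volume of that annulus is controlled using the $CYL(\tfrac13)$ property, giving a polynomial-in-$\varepsilon$ (indeed $O(\varepsilon)$-type) volume bound. To handle $u_\varepsilon$ I cannot use an $L^\infty$ bound directly since we only normalize $u_\varepsilon$ up to an additive constant on balls; instead, following Hein, I would first subtract the mean of $u_\varepsilon$ over a suitable ball and use the elliptic equation \eqref{eq_1} together with the uniform $C^2$ bound \eqref{TY_est} to get $|u_\varepsilon|(x) \le C(1 + d(x)^2)$ — a quadratic growth bound consistent with the $CYL$ geometry and the bounded diameter of fibres. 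Then on the annulus where $f_\varepsilon$ lives, $d(x) \le C\sqrt\varepsilon$, so $|u_\varepsilon| \le C(1+\varepsilon) \le C$ there; combining with the exponentially small pointwise bound on $|e^{f_\varepsilon}-1|$ and the volume bound yields $\bigl|\int_X u_\varepsilon(e^{f_\varepsilon}-1)(\omega^a_\varepsilon)^2\bigr| \le C\varepsilon\, e^{-1/C\varepsilon^{1/2}}$, which is far smaller than $\varepsilon^{4/3}$, so the claimed bound follows with room to spare.

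The main obstacle I anticipate is the justification of the integration by parts on the non-compact manifold $X$ and the growth control on $u_\varepsilon$: in Hein's original compact (or his own $ALH$) setting one has either no boundary term or well-understood ends, whereas here one must check that $u_\varepsilon$ and $\nabla u_\varepsilon$ decay (or grow at most polynomially, with the right rate) along the $ALH^*$ end uniformly in $\varepsilon$, using that $\omega_\varepsilon$ is asymptotic to $\mathbb{B}\omega^a_\varepsilon$ with exponential decay (Theorem \ref{generalized Hein's metric}) but with constants that must be shown independent of the small deformation $(Y,D)$ and of $\varepsilon$ after the appropriate rescaling. A secondary technical point is extracting the sharp exponent: the statement only asks for $O(\varepsilon^{4/3})$, so one does not need the optimal exponential gain, but one does need to be careful that the quadratic growth of $u_\varepsilon$ against the $O(\sqrt\varepsilon)$-radius support of $f_\varepsilon$ does not reintroduce a large factor — this is where tracking the $CYL(\tfrac13)$ constants quantitatively, as set up in the preceding section, becomes essential.
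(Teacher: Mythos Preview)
Your approach has a genuine gap at the step where you claim $|u_\varepsilon|(x)\le C(1+d(x)^2)$ with $C$ independent of $\varepsilon$. From the Hessian bound $|\nabla^2 u_\varepsilon|\le C$ alone one only obtains $|u_\varepsilon(x)-u_\varepsilon(x_0)-\langle\nabla u_\varepsilon(x_0),\cdot\rangle|\le C\,d(x)^2$; subtracting a mean over a ball kills the constant term but not the linear one, and there is no a~priori $\varepsilon$-independent bound on $|\nabla u_\varepsilon(x_0)|$ --- that is essentially what the proposition is meant to deliver. The qualitative exponential decay of $u_\varepsilon$ at infinity from Theorem~\ref{generalized Hein's metric} is stated for fixed data with no uniformity in $\varepsilon$, so it cannot be invoked quantitatively here either. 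Without pointwise control of $u_\varepsilon$ on $\mathrm{supp}(f_\varepsilon)$ the right-hand side $\int_X u_\varepsilon(e^{f_\varepsilon}-1)(\omega^a_\varepsilon)^2$ is not bounded, and the argument stalls. (Incidentally, the support of $f_\varepsilon$ lies at $g^a_\varepsilon$-distance of order $\varepsilon^{-1/2}$ from $x_0$, not $\sqrt{\varepsilon}$ --- the base directions scale like $\varepsilon^{-1/2}$ in the semi-flat metric --- so even if your growth bound held it would only yield $|u_\varepsilon|\le C\varepsilon^{-1}$ there; this alone would not be fatal since $e^{-1/C\varepsilon^{1/2}}$ beats any polynomial, but it shows the scales are working against you rather than for you.)

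The paper's proof is designed precisely to avoid any pointwise control of $u_\varepsilon$. Instead of integrating by parts on all of $X$, it inserts a compactly supported cutoff $\zeta$ and obtains the inequality \eqref{ineq_1} on a sequence of annuli $A_i$; the term $\|u-\mu\|_{L^2(A_i)}$ is then traded for $\|\nabla u\|_{L^2}$ on a slightly larger annulus via the Poincar\'e inequality on balls furnished by the $CYL(\tfrac13)$ structure. This produces a recursion $\int_{E_{mK}}|\nabla u|^2\le C\varepsilon^{-2/3}\big(\int_{E_{(m-1)K}}-\int_{E_{mK}}\big)|\nabla u|^2+C\varepsilon^{2/3}e^{-\sigma mK}$, which iterates to give the exterior bound; a separate cutoff argument on $B(x_0,C/\sqrt{\varepsilon})$ handles the interior. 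The whole scheme uses only $|\nabla^2 u_\varepsilon|\le C$ and never a $C^0$ or growth bound on $u_\varepsilon$ --- that is the missing idea in your proposal.
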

\begin{proof}To make our notation simpler, we write $u_{\varepsilon}=u$ and $\|\cdot\|_{C^k}=\|\cdot\|_{C^k_{g^a_{\varepsilon}}}$, for all $k\in\mathbb{N}_0$ in this proof.
Here we separate our proof into three steps.\\
{\bf Step 1}. 
Since $\omega_{\varepsilon}=\omega^a_{\varepsilon}+i\partial\bar{\partial}u$, we have  $(\omega_{\varepsilon}^2-(\omega^a_{\varepsilon})^2)=(\omega_{\varepsilon}+\omega^a_{\varepsilon})i\partial\bar{\partial}u=\frac{1}{2}(\omega_{\varepsilon}+\omega^a_{\varepsilon})dd^cu$, where $d=\partial+\bar{\partial}$ as usual and $d^c=i(\bar{\partial}-\partial)$. By taking integration by parts, we have
\begin{align}\label{ineq_1}
\int \zeta |\nabla u|^2(\omega^a_{\varepsilon})^2\leq -2\Big[\int (u-\mu)d\zeta\wedge d^cu\wedge (\omega_{\varepsilon}+\omega^a_{\varepsilon})+2\int\zeta(u-\mu)(e^{f_{\varepsilon}}-1)(\omega^a_{\varepsilon})^2\Big], 
\end{align}
for any compactly supported smooth function $\zeta$ and $\mu\in\mathbb{R}$. (\ref{TY_est}) tells us that
\begin{align}\label{TY}
\sup |\nabla^2u|\leq C,
\end{align}
for some constant independent of $\varepsilon$. Meanwhile, (4) in Lemma \ref{ansatz metric} tells us that $|e^{f_{\varepsilon}}-1|\leq C|f_{\varepsilon}|$.\\
\ \\
{\bf Step 2}. We have the following observation for $d^{\varepsilon}$.
\begin{lem}\label{d1_and_de}
	Let $x_0\in X$ be a fixed point not in a singular fibre and $d^{\varepsilon_0}$, $d^{\varepsilon}$ be distance functions defined by $g^a_{\varepsilon_0}$ and $g^a_{\varepsilon}$ respectively for some positive numbers $\varepsilon<\varepsilon_0$. Then there exist a constants $C,C'>0$ independent of $\varepsilon$ and $\varepsilon_0$ such that 
	\begin{align}\label{scal_dist2}
		\varepsilon^{\frac{1}{2}}d^{\varepsilon}(x,x_0)\geq {C}{(\epsilon_0)}^{\frac{1}{2}}
		d^{\epsilon_0}(x,x_0),
	\end{align} 
	for all $x$ with $d^{\varepsilon_0}(\pi(x),\pi(x_0))>C'$ and $\varepsilon<\epsilon_0$.
Furthermore, we have the reversed estimate
\begin{align}\label{scal_dist}
	\varepsilon^{\frac{1}{2}} d^{\varepsilon}(x,x_0)\leq  \frac{1}{C}{(\epsilon_0)}^{\frac{1}{2}}
	d^{\epsilon_0}(x,x_0),
\end{align}  for all $\varepsilon<\epsilon_0$.
\end{lem}
\begin{proof}
	First, we assume the case $\mathbb{B}=0$ and $s:B\rightarrow X$ is the holomorphic section for the semi-flat metric. To show the first half of the Lemma, we first observe the following:
  \begin{align}\label{2}
  	 & d^{\epsilon}(s\pi(x),s\pi(x_0))=O\big( (\varepsilon_0/\epsilon)^{\frac{1}{2}} d^{\varepsilon_0}(s\pi(x),s\pi(x_0))  \big ) \mbox{ and } d^{\varepsilon_0}(s\pi(x),s\pi(x_0))=O(|\log{|z|}|^{\frac{3}{2}})
  \end{align}   from the explicit expression \eqref{sf metric}, where $z$ is the coordinate for $\pi(x)$. On the other hand, we also have the estimates for the diameters of fibres 
\begin{align}
	 diam_{\epsilon}(L_{\pi(x)})=O(\epsilon^{\frac{1}{2}}|\log{|z|}|^{\frac{1}{2}}), |z|>2, 
\end{align} if $x$ is away from any singular fibres and 
\begin{align} \label{2'}
	  diam_{\epsilon}(L_{\pi(x)})\lesssim O((\epsilon\log{\epsilon}^{-1})^{\frac{1}{2}})
\end{align} if $x$ is near an $I_1$-singular fibre from \cite[Proposition 3.5]{GW}.
Then the lemma follows from the triangle inequality 
\begin{align*}
	d^{\epsilon}(x,x_0)\geq -d^{\epsilon}(x,s\pi(x))+d^{\epsilon}(s\pi(x),s\pi(x_0))-d^{\epsilon}(s\pi(x_0,x_0))
\end{align*}
 and \eqref{2}\eqref{2'}. 
To see the second part of the lemma when $\mathbb{B}=0$, observe that the limiting metric ${\varepsilon}\omega_{\varepsilon}^a, \varepsilon\rightarrow 0$ exits and \eqref{scal_dist} holds when $\epsilon=0$. Notice that \eqref{scal_dist} also holds when $\pi(x)=\pi(x_0)$ for all $\varepsilon>0$. Thus, there exists a tubular neighborhood of $\pi^{-1}(x_0)$ such that \eqref{scal_dist} holds for all $\varepsilon$ if $x$ falls in this tubular neighborhood. 
Then the lemma follows from the triangle inequality 
\begin{align*}
	d^{\epsilon}(x,x_0)\leq d^{\epsilon}(x,s\pi(x))+d^{\epsilon}(s\pi(x),s\pi(x_0))+d^{\epsilon}(s\pi(x_0,x_0))
\end{align*}
and \eqref{2}\eqref{2'}.

When $\mathbb{B}\neq 0$, one can represent $\mathbb{B}$ by $(U_i,\sigma_i)$ with the index set finite. Consider the geodesic on $B$ connecting $\pi(x_0), \pi(x)$. Choose a point $x_j\in U_{i_j}$, where the path intersects with $U_{i_j}$ such that the geodesic connecting $x_j,x_{j+1}$ is contained in $U_{i_j}$ after replace $\{U_i\}$ by a refinement. Then the triangle inequality implies that
\begin{align*}
\sqrt{\frac{\epsilon_0}{\epsilon}}\sum_j d^{\epsilon_0}(\sigma_{i_j}\pi(x_j),\sigma_{i_j}\pi(x_{j+1}))&-\sum_j diam_{\epsilon}(L_{\pi(x_j)})\leq	d^{\epsilon}(x,x_0)\\
&\leq \sqrt{\frac{\epsilon_0}{\epsilon}}\sum_j d^{\epsilon_0}(\sigma_{i_j}\pi(x_j),\sigma_{i_j}\pi(x_{j+1}))+\sum_j diam_{\epsilon}(L_{\pi(x_j)})
\end{align*} and the rest of the proof is similar. 

\end{proof}
By using Lemma \ref{d1_and_de}, we obtain the following Corollary.
\begin{cor}\label{two_side_distineq}
Let $x_0$ be the point given by Definition \ref{CYL}. Then there exists a constant $\tilde{C}$ such that for any point $x\in X\setminus B_{\tilde{C}}(x_0)$ and $\varepsilon<\epsilon_0$, we have
\begin{align}
d^{\varepsilon}(x,x_0)=O\big(\sqrt{\frac{\epsilon_0}{\varepsilon}}\big)d^{\epsilon_0}(x,x_0).
\end{align}
\end{cor}

In the rest of this paper, we will use $B^{d^{\varepsilon}}(x,r)=B^{d^{\varepsilon}}_r(x)$ to denote the open ball with radius $r$ centered at $x$ with respect to the distance $d^{\varepsilon}$. We will also use $d^1$ to denote the distance function
\begin{align*}
\sqrt{\varepsilon_0}d^{\varepsilon_0}
\end{align*}
and $B_r(x)$ (or $B(x,r)$) to denote $B^{d^{1}}_r(x)$. Meanwhile, the constant $\varepsilon_0$ can be chosen to be a fixed number in the rest of this paper, so we can assume it is an universal constant without loss of generality.\\
\ \\
{\bf Step 3}. Denote by $\tilde{X}$ the set $X\setminus B_{\tilde{C}}(x_0)$ in the following argument. Let $\{r_i\}$ be a non-negative increasing sequence and
\begin{align}
E_i&:=\{d^1 (x,x_0)\geq\sqrt{\varepsilon}r_{i+1}\}\cap \tilde{X},\\
A_i&:=\{\sqrt{\varepsilon}r_{i}\leq d^1(x,x_0)\leq\sqrt{\varepsilon}r_{i+1}\}\cap\tilde{X}.
\end{align}  
Then we have
\begin{align}\label{Est_of_balls}
\Big|\int_{A_i}(\omega^a_{\varepsilon})^2\Big|\leq |B^{d^1}(x_0,\sqrt{\varepsilon} r_{i+1})|\leq C\varepsilon^{\frac{2}{3}} r_{i+1}^{\frac{4}{3}}.
\end{align} Here the last inequality is due to \cite[Theorem 1.5 (iii)]{Hein}.
By (\ref{TY}) and (\ref{Est_of_balls}), one can choose $\zeta$ be a cut-off function with value 1 on $E_{i}$ and value 0 on $E_{i-1}^c$ in  (\ref{ineq_1}) such that
\begin{align}\label{ineq_2}
\int_{E_i}|\nabla u|^2\leq C(r_{i+1}-r_i)^{-1}\|u-\mu\|_{L^2(A_i)}\|\nabla u\|_{L^2(A_i)}+C\varepsilon^{\frac{2}{3}}\sum_{j=i}^{\infty}r_{j+1}^{\frac{4}{3}}\|f_{\varepsilon}\|_{L^{\infty}(A_j)}.
\end{align}
We define $\rho_i:=\frac{1}{2}(r_{i+1}+r_i)$. Suppose that
\begin{align}\label{asp1}
\rho_i-\frac{1}{C}\rho_i^{\frac{1}{3}}\leq r_i\leq r_{i+1}\leq \rho_i+\frac{1}{C}\rho_i^{\frac{1}{3}}.
\end{align}
Because $X$ is $CYL(\frac{1}{3})$ with respect to $d^1$, together with \eqref{scal_dist}\eqref{asp1} we can prove that
\begin{align}\label{domain_incl}
A_i\subset A'_i\subset B_i
\end{align}
where
\begin{align}
A'_i&:=\bigg\{x\in \tilde{X}\ \bigg|\ \sqrt{\varepsilon}\rho_i-\frac{1}{C}(\sqrt{\varepsilon}\rho_i)^{\frac{1}{3}}\leq d^{1}(x,x_0)\leq \sqrt{\varepsilon}\rho_i+\frac{1}{C}(\sqrt{\varepsilon}\rho_i)^{\frac{1}{3}}\bigg\};\\
B_i&:=B(x_i,C\varepsilon^{-\frac{1}{3}}\rho_i^{\frac{1}{3}})\mbox{ for some }x_i\in A'_{i}.
\end{align}
To see this, one should notice that Definition \ref{CYL}, Lemma \ref{d1_and_de} and Corollary \ref{two_side_distineq} imply that
\begin{align}
A_i&= \big\{x\in\tilde{X}\ \big| 
\ \sqrt{\varepsilon} r_i\leq
   d^1(x,x_0)\leq \sqrt{\varepsilon} r_{i+1}\big\}\nonumber\\
   &\subseteq\bigg\{x\in\tilde{X}\ \bigg|\  
\sqrt{\varepsilon}\rho_i-\frac{\varepsilon^{\frac{1}{3}}}{C}(\sqrt{\varepsilon}\rho_i)^{\frac{1}{3}}\leq
   d^1(x,x_0)\leq\sqrt{\varepsilon}\rho_i+\frac{\varepsilon^{\frac{1}{3}}}{C}(\sqrt{\varepsilon}\rho_i)^{\frac{1}{3}}\bigg\}\nonumber\\
  &\subseteq \bigg\{x\in\tilde{X}\ \bigg|\  
\sqrt{\varepsilon}\rho_i-\frac{1}{C}(\sqrt{\varepsilon}\rho_i)^{\frac{1}{3}}\leq
   d^1(x,x_0)\leq\sqrt{\varepsilon}\rho_i+\frac{1}{C}(\sqrt{\varepsilon}\rho_i)^{\frac{1}{3}}\bigg\}\nonumber\\
   &\subseteq B^{d^1}(x_i,(\sqrt{\varepsilon}\rho_i)^{\frac{1}{3}})\subset B(x_i,C \varepsilon^{-\frac{1}{3}}\rho_i^{\frac{1}{3}}).\nonumber
\end{align}

By taking $r_i=O(i^{\frac{3}{2}})$, (\ref{asp1}) holds for the constant $C$. We can check directly by the triangle inequality that there exists $K\in 2\mathbb{N}$ which is independent of $\varepsilon$ such that
\begin{align}
B_i\subset \cup_{j=i-\frac{K}{2}}^{i+\frac{K}{2}}A_j:=A''_i.
\end{align}
In addition, we can only consider $r_i>\frac{1}{C\sqrt{\varepsilon}}$ because otherwise $A_i=\emptyset$. Then by standard Poincar\'e inequality, (\ref{domain_incl}) implies
\begin{align}
\int_{A_i}|u-\mu|^2\leq \int_{B_i}|u-\mu|^2\leq C\varepsilon^{-\frac{2}{3}}\rho_i^{\frac{2}{3}}\int_{B_i}|\nabla u|^2\leq C\varepsilon^{-\frac{2}{3}}\rho_i^{\frac{2}{3}}\int_{A''_{i}}|\nabla u|^2
\end{align}
where $\mu=\frac{1}{|B_i|}\int_{B_i}u$. Notice that the constant in the Poincar\'e inequality only depends on the dimension of the geometry and the lower bound of the Ricci curvature. Then by the mean value theorem, we have $(r_{i+1}-r_i)^{-1}\leq C i^{-\frac{1}{2}}$ and $\rho_i^{\frac{1}{3}}\leq C(i+1)^{\frac{1}{2}}$. So (\ref{domain_incl}), (\ref{ineq_2}) and Corollary \ref{est_of_f} imply that 
\begin{align}
\int_{E_{i}}|\nabla u|^2\leq C\varepsilon^{-\frac{2}{3}}
\int_{A''_{i}}|\nabla u|^2+C\varepsilon^{\frac{2}{3}}\sum_{j=i}^{\infty}(j+1)^2e^{-\varepsilon^{-\frac{1}{6}}(j+1)}
\leq C\varepsilon^{-\frac{2}{3}}\int_{A''_{i}}|\nabla u|^2+C\varepsilon^{\frac{2}{3}}
e^{-\sigma i}
\end{align}
for $\sigma=1/2C\varepsilon^{\frac{1}{6}}$. So by taking $i=(m-\frac{1}{2}) K$, we have
\begin{align}
\int_{E_{mK}}|\nabla u|^2\leq C\varepsilon^{-\frac{2}{3}}\Bigg(\int_{E_{(m-1)K}}|\nabla u|^2-\int_{E_{mK}}|\nabla u|^2\Bigg)+C\varepsilon^{\frac{2}{3}}
e^{-\sigma (m-1)K}
\end{align}
for all $m\in\mathbb{N}$, which implies
\begin{align}
(C\varepsilon^{-\frac{2}{3}}+1)\int_{E_{mK}}|\nabla u|^2\leq C\varepsilon^{-\frac{2}{3}} \int_{E_{(m-1)K}}|\nabla u|^2 +C\varepsilon^{\frac{2}{3}}
e^{-\sigma (m-1)K}.
\end{align}
By taking $R=\Big(\frac{C+\varepsilon^{\frac{2}{3}}}{C}\Big)$,
\begin{align}
R^{m}\int_{E_{mK}}|\nabla u|^2- R^{m-1}\int_{E_{(m-1)K}}|\nabla u|^2 \leq \varepsilon^{\frac{4}{3}} R^{m-1}e^{-\sigma (m-1)K}.
\end{align}
This implies
\begin{align}
R^{m}\int_{E_{mK}}|\nabla u|^2\leq \varepsilon^{\frac{4}{3}}
\sum_{j=1}^mR^{j-1}e^{-\sigma(j-1)K}\leq \varepsilon^{\frac{4}{3}}
\sum_{j=1}^{\infty}R^{j-1}e^{-\sigma(j-1)K}.
\end{align}
When $\varepsilon$ small enough, we can take $1<R<e^{\frac{1}{2}\sigma K}$ such that \begin{align}
\int_{E_{mK}}|\nabla u|^2\leq C\varepsilon^{\frac{4}{3}}
 e^{-\delta m}
\end{align}
for all $m\in\mathbb{N}$ where 
\begin{align}
\delta=\ln R=\ln \Big(\frac{C+\varepsilon^{\frac{2}{3}}}{C}\Big)=O(\varepsilon^{\frac{2}{3}}).
\end{align}
This implies that there exists $C>0$ such that
\begin{align}\label{outside_est}
\int_{E_{r}}|\nabla u|^2\leq C\varepsilon^{\frac{4}{3}}
 e^{-\delta r^{\frac{2}{3}}}
\end{align}
for all $r>\frac{1}{C\sqrt{\varepsilon}}$.

Meanwhile, by using (\ref{ineq_1}), one can take $\zeta$ to be a cut-off function with value 1 on $B(x_0,\frac{C}{\sqrt{\varepsilon}})$ and value 0 on outside $B(x_0,\frac{C}{\sqrt{\varepsilon}}+1)$, then we have 
\begin{align}\label{ineq_2*}
\int_{B(x_0,\frac{C}{\sqrt{\varepsilon}})}|\nabla u|^2\leq C\|u-\mu\|_{L^2\big(A\big(x_0,\frac{C}{\sqrt{\varepsilon}},\frac{C}{\sqrt{\varepsilon}}+1\big)\big)}\|\nabla u\|_{L^2\big(A\big(x_0,\frac{C}{\sqrt{\varepsilon}},\frac{C}{\sqrt{\varepsilon}}+1\big)\big)}+C\|f_{\varepsilon}\|_{L^{\infty}(B(x_0,\frac{C}{\sqrt{\varepsilon}}))}.
\end{align}
Now, by Poincar\'e inequality for annular domains \cite[Lemma 2.2]{Hein}, we have
\begin{align}
\|u-\mu\|_{L^2\big(A\big(x_0,\frac{C}{\sqrt{\varepsilon}},\frac{C}{\sqrt{\varepsilon}}+1\big)\big)}\leq C\|\nabla u\|_{L^2\big(A\big(x_0,\frac{C}{\sqrt{\varepsilon}}-1,\frac{C}{\sqrt{\varepsilon}}+2\big)\big)}
\end{align}
and by Corollary \ref{est_of_f}, we have
\begin{align}
\|f_{\varepsilon}\|_{L^{\infty}(B(x_0,\frac{C}{\sqrt{\varepsilon}}))}\leq \|f_{\varepsilon}\|_{L^{\infty}\big(A\big(x_0,\frac{1}{C\sqrt{\varepsilon}},\frac{C}{\sqrt{\varepsilon}}\big)\big)}\leq C\varepsilon^{\frac{4}{3}}.
\end{align}
In fact, Corollary \ref{est_of_f} implies the middle term of this inequality decays exponentially with respect to $\varepsilon$. Therefore, it is bounded by $C\varepsilon^q$ for any $q>0$ and $C$ depending on $q$.
So (\ref{ineq_2*}) implies
\begin{align}\label{inside_est}
\int_{B(x_0,\frac{C}{\sqrt{\varepsilon}})}|\nabla u|^2\leq C\|\nabla u\|_{L^2\big(A\big(x_0,\frac{C}{\sqrt{\varepsilon}}-1,\frac{C}{\sqrt{\varepsilon}}+2\big)\big)}+C\varepsilon^{\frac{4}{3}}\leq C\varepsilon^{\frac{4}{3}}
\end{align}
when $\varepsilon$ small enough. Combine (\ref{outside_est}) and (\ref{inside_est}), we obtain
\begin{align}\label{ineq_3}
\int_{X}|\nabla u|^2\leq C\varepsilon^{\frac{4}{3}}.
\end{align}
\end{proof}
Notice that Hein concluded the exponential decay for the square integral of $|\nabla u_{\varepsilon}|$ \cite{Hein}, which is not our main concern. We are more interested in the vanishing order of the coefficient in front of it. However, the exponential decay plays an important rule here when we consider the $L^2$-estimate of $|\nabla u_{\varepsilon}|$ on the complement of a compact region.

By using Proposition \ref{lema_Hein}, we are now ready to prove Theorem \ref{c2alpha}.
\begin{proof}[Proof of Theorem \ref{c2alpha} ]
Let $R>0$ be any positive number. By Lemma \ref{d1_and_de}, we have
\begin{align}
B^{d^{\varepsilon}}(x_0,\varepsilon^{-\frac{1}{2}} R)\supset B^{d^{\varepsilon_0}}(x_0, R)
\end{align}

Let $\mu_0=\int_{B^{d^{\varepsilon}}(x_0,\varepsilon^{-\frac{1}{2}} R)} u$. We can apply the standard Nash-Moser iteration on the equation $L(u-\mu_0)=(e^{f_{\varepsilon}}-1)$ defined on $B^{d^{\varepsilon}}(x_0,\varepsilon^{-\frac{1}{2}} R)$. Then we have
\begin{align}\label{moser}
\|u-\mu_0\|_{L^{\infty}(B^{d^{\varepsilon}}(x_0,\varepsilon^{-\frac{1}{2}} R)}\leq C \|u-\mu_0\|_{L^2(B^{d^{\varepsilon}}(x_0,2\varepsilon^{-\frac{1}{2}} R))},
\end{align}
for some $C$ depending only on the upper bound of $\frac{\Lambda}{\lambda}$, which is universal because of (\ref{L_over_l}). Meanwhile, by using the Poincar\'e inequality \cite[Lemma 2.1]{Hein} and (\ref{ineq_3}), the right-hand side of (\ref{moser}) can be bounded:
\begin{align}\label{l2est}
\|u-\mu_0\|_{L^2(B^{d^{\varepsilon}}(x_0,\varepsilon^{-\frac{1}{2}} R))}&\leq C\varepsilon^{-\frac{1}{2}} R\|\nabla u\|_{L^2(B^{d^{\varepsilon}}(x_0,2\varepsilon^{-\frac{1}{2}} R))}\\
&\leq C\varepsilon^{\frac{2}{3}}
 \varepsilon^{-\frac{1}{2}} R=C\varepsilon^{\frac{1}{6}}R.\nonumber
\end{align}
So this estimate holds for a smaller domain $B^{d^{\varepsilon_0}}(x_0, R)$. So this estimate holds for a smaller domain $B^{d^{\varepsilon_0}}(x_0, R)$. Combine (\ref{l2est}) and (\ref{moser}), we have
\begin{align}\label{estpt1}
\|u-\mu_0\|_{C^0_{g^a_{\varepsilon}}(B^{d^{\varepsilon_0}}(x_0, R))}=\|u-\mu_0\|_{L^{\infty}(B^{d^{\varepsilon_0}}(x_0, R))}\leq CR\varepsilon^{\frac{1}{6}}
\end{align}
Therefore by using the Schauder estimate, (\ref{estpt1}) and (5) in Lemma \ref{ansatz metric}, we obtain
\begin{align}
\Big\|\ u-\dashint_{B^{d^{\varepsilon_0}}(x_0, R)}u \ \Big\|_{C^{2,\alpha}_{g^a_{\varepsilon}}(B^{d^{\varepsilon_0}}(x_0, R))}&\leq C\Bigg(\|u-\mu_0\|_{C^0_{g^a_{\varepsilon}}(B^{d^{\varepsilon_0}}(x_0, 2R))}+\|f_{\varepsilon}\|_{C^{0,\alpha}_{g^a_{\varepsilon}}(B^{d^{\varepsilon_0}}(x_0, 2R))}\Bigg)\\
&\leq CR\varepsilon^{\frac{1}{6}}+Ce^{-1/C\varepsilon^{1-p}}\leq CR\varepsilon^{\frac{1}{6}}\nonumber
\end{align}
and we complete this proof.
\end{proof}
\begin{rk}
Although we only have $C^{2,\alpha}$-estimate for $(u_{\varepsilon}-\dashint u_{\varepsilon})$, one shall remember that the term we are concerned is $\partial\bar{\partial}u_{\varepsilon}$ appears in the Monge-Amp\'ere equation. According to Theorem \ref{c2alpha}, we have 
\begin{align}\label{alpha_est}
\|\partial\bar{\partial}u_{\varepsilon}\|_{C^{\alpha}_{g^a_{\varepsilon}}(B^{d^{\epsilon_0}}(x_0,R))}\leq CR\varepsilon^{\frac{1}{6}}.
\end{align}
\end{rk}
\begin{rk}
	Here we may also perform the hyperK\"ahler gluing of Donaldson \cite{D2} (see also \cite{Fos}\cite{HSVZ}) with the modification such that the elliptic fibration complex structure is preserved \cite{CVZ}. However, it is less clear if all Ricci-flat metrics near the collapsing limits can be derived this way as the hyperK\"ahler gluing does not preserve the cohomology classes. 
\end{rk}

\section{Collapsing Limits of $ALH^*$-Gravitational Instantons and Partial Compactification of Pointed Moduli Space of $ALH^*$-Gravitational Instantons} \label{sec: compactification}
In this section, we want to have some description of the compactification of the pointed $ALH^*$-gravitational instantons.
 For the focus of the paper in the later discussion, we have the following definition: 
\begin{defn}
	A sequence of pointed $ALH^*_d$-gravitational instantons $(\check{Y}_i,\check{D}_i,\check{c}_i,[\check{\omega}_i],p_i)$ is admissible if 
		 \begin{itemize}
			\item $\check{Y}$ is a weak del Pezzo surface of degree $d$, i.e. $(-K_{\check{Y}})^2=d$ and $-K_{\check{Y}}$ is big and nef. 
		\item $\check{D}$ is a smooth anti-canonical divisor of $\check{Y}$. By adjunction, it is a smooth elliptic curve. 
		\item $[\check{\omega}]$ is a K\"ahler class of $\check{Y}$. 
		\item $\check{c}\in \mathbb{R}_+$ determines a scaling of the model metric at its end. 
		\item $p_i\in \check{Y}_i\setminus \check{D}_i$ falls in a finite region (see Definition \ref{finite region}). 
	\end{itemize}
\end{defn}

%
%
 
  Now given an admissible sequence of pointed $ALH^*_d$-gravitational instantons, we want to study the pointed Gromov-Hausdorff limit of it. Notice that the possible $3$ or $2$-dimensional limits are classified by Sun-Zhang \cite[Theorem 4.4, Theorem 5.5]{SZ} but it is not clear given a sequence what will be the pointed Gromov-Hausdorff limit. 
 To obtain such a result, we should recall the definition of Gromov-Hausdorff convergence. Readers can see \cite{P} for more detail. Let $(X, d_1)$, $(Y, d_2)$ be two compact metric spaces. Then we can define the Gromov-Hausdorff distance to be
 \begin{align}
 	d_{GH}(X,Y)=\inf\Big\{d(X,Y)\mbox{ }\Big|\mbox{ }d&\mbox{ is a metric defined on }X\sqcup Y\\
 	&\mbox{ with }d|_{X\times X}=d_1\mbox{ and }d|_{Y\times Y}=d_2\Big\}.\nonumber
 \end{align}
 Here $d(X,Y)$ is defined in the sense of the Hausdorff distance, i.e.,
 \begin{align}
 	d(A,B):=\inf \big\{r\mbox{ }\big|\mbox{ }A\subset \cup_{x\in B}B^d(x,r)\mbox{ and } B\subset \cup_{x\in A}B^d(x,r) \big\}
 \end{align}
 for all compact sets $A,B\subset X\sqcup Y$. In general, we omit the metrics $d_1$, $d_2$ in our notation when the metrics are obvious in the content. When the metrics on sets $X$ and $Y$ are not specific, we will use the notation $d_{GH}((X,d_1),(Y,d_2))$ (or $d_{GH}((X,g_1),(Y,g_2))$) instead to clarify them. In our case, since the manifolds we consider are complete, non-compact. So we need to use the following version of the Gromov-Hausdorff convergence.
 \begin{defn}
 	For any sequence of pointed metric spaces $\{(X_i,x_i, d_i)\}_{i\in\mathbb{N}}$, we call the sequence has the Gromov-Hausdorff limit
 	\begin{align}
 		(X_i,x_i, d_i)\rightarrow (X,x_0,d_0)
 	\end{align}
 	if and only if for any $R>0$
 	\begin{align}
 		\lim_{i\rightarrow \infty}d_{GH}(\overline{B^{d_i}(x_i, R)},\overline{B^{d_0}(x_0,R)})=0.
 	\end{align}
 \end{defn}

 \begin{thm}\label{thm: compactify 1} Let $(\check{Y}_i,\check{D}_i,[\check{\omega}_t],c_i,p_i)$ be a sequence of admissible $ALH^*$-gravitational instantons.  Denote by $\tau_i$ the modulus of $\check{D}_i$ in the standard fundamental domain of $\mathbb{H}/SL(2,\mathbb{Z})$, where $\mathbb{H}$ is the upper half plane and assume that $\mbox{Im}\tau_i\rightarrow \infty$. Denote by $g^a_{\epsilon_0}(X_i)$ the Riemannian metric associate to the ansatz metric in Lemma \ref{ansatz metric} for the elliptic fibration on $X_i$. Assume further that $((f_i\circ \pi_0)^{-1}(K), \epsilon_0^{-\frac{1}{2}}g^a_{\epsilon_0}(X_i),p_i)$ converges in the pointed Gromov-Hausdorff topology for a fixed $\epsilon_0\ll 1$,
 	 then we have the following:
 	\begin{enumerate}
 			\item If $c_i(\mbox{Im}\tau_i)^{\frac{3}{2}}\rightarrow \infty$, $c_i(\mbox{Im}\tau_i)^{-1/6}\rightarrow 0$, then the pointed Gromov-Hausdorff limit is $\mathbb{R}^2$ with the flat metric.  
 		\item If $c_i(\mbox{Im}(\tau_i))^{\frac{3}{2}}\rightarrow C$ for some constant $C>0$, the pointed Gromov-Hausdorff limit is $\mathbb{R}^2$ with a scaling of the special K\"ahler metric $Wdz\otimes d\bar{z}$, where the scaling depends strictly increasingly on $C$ and $W$ which are defined in \eqref{special Kahler}. 
 		\item If $c_i(\mbox{Im}(\tau_t))^{\frac{3}{2}}\rightarrow 0$, the pointed Gromov-Hausdorff limit is $\mathbb{R}_+$.
 	\end{enumerate}
 \end{thm}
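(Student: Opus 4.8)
The strategy is to analyze the three regimes separately, in each case reducing to understanding the rescaled ansatz metric $\epsilon_0^{-1/2}g^a_{\epsilon_0}$ and the Tian--Yau metric $\check g_i$, and then transporting the conclusion across the hyperK\"ahler rotation using Theorem \ref{local model}. The starting point in all three cases is the dictionary from Theorem \ref{local model}: after hyperK\"ahler rotation, the scaled Tian--Yau metric $c_i\check\omega_i$ corresponds to a scaled generalized semi-flat metric, with the fibre size $\epsilon_i$ and the twisting parameter $b_i$ determined by $\tau_i$ via $\epsilon_i = \tfrac{2\sqrt2\,\pi}{\operatorname{Im}\tau_i}$ and $b_i = -\tfrac12\operatorname{Re}(\tau_i)d$, up to the overall factor $\alpha_i=\sqrt{d\pi\operatorname{Im}\tau_i}$. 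Combining this with Corollary \ref{cor: limit}, the actual Ricci-flat metric $\omega_{\epsilon_i}$ on $X_i$ differs from the ansatz $\mathbb{B}\omega^a_{\epsilon_i}$ by an error that is $O(R\epsilon_i^{1/6})$ in $C^{0,\alpha}$ on balls of fixed $g^a_{\epsilon_0}$-radius; so for the pointed Gromov--Hausdorff limit on the finite region $(f_i\circ\pi_0)^{-1}(K)$ we may work entirely with the ansatz metrics, which are semi-flat away from the gluing regions near the $I_1$-fibres.

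\textbf{The three regimes.} First I would treat case (3), $c_i\operatorname{Im}\tau_i\to 0$: here the scaling factor $c_i$ is so small relative to $\operatorname{Im}\tau_i$ that even after rescaling the base special K\"ahler metric degenerates — concretely, $c_i\epsilon_i^{-1}\to\infty$ forces the base directions transverse to a single coordinate to collapse, and the remaining geometry is that of the distance to the section along one real direction, giving $\mathbb{R}_+$; the point is that the special K\"ahler metric $W\,dz\otimes d\bar z$ with $W\sim \epsilon/|\log|z||$, after multiplying by the appropriate power of $c_i$, collapses one of the two real dimensions. For case (2), $c_i\operatorname{Im}\tau_i\to C>0$ with $c_i\to0$: now $c_i\epsilon_i^{-1}$ converges to a positive constant proportional to $C$, the fibres collapse (their diameter is $O((c_i\epsilon_i)^{1/2}|\log|z||^{1/2})\to 0$) but the base special K\"ahler metric survives at a definite scale; the limit is $\mathbb{R}^2$ (the image of the base, which is $\mathbb{C}$ with the finitely many punctures at the discriminant filled in as far as the metric completion sees them) equipped with a fixed multiple of $\omega_{SK}$, and I would check that the multiple is strictly increasing in $C$ by tracking the $C$-dependence through $W = \epsilon/\operatorname{Im}(\bar\tau_1\tau_2)$. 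For case (1), $c_i\lesssim (\operatorname{Im}\tau_i)^{-2/3}$ with $\operatorname{Im}\tau_i\to\infty$: here $c_i\epsilon_i^{-1}\to 0$, so even the base special K\"ahler metric is rescaled to zero curvature effect — but since $\omega_{SK}$ on $B_0\setminus\Delta_0$ is already locally Euclidean (Hessian) and we only multiply by a constant, the local picture near $p_i$ in the finite region $K$ (which avoids $\Delta_0$) is simply a shrinking flat metric; the limit, once renormalized to have the base be non-degenerate, is flat $\mathbb{R}^2$. The key quantitative inputs are the diameter estimates for the fibres and the base geodesic lengths from the proof of the $CYL(1/3)$ lemma and from \eqref{sf metric}.

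\textbf{Assembling the limit.} In each case the formal limit is a quotient of the base by collapsing directions, but I must verify genuine pointed Gromov--Hausdorff convergence: for this I would use that the ansatz metric is a fibration metric over the base, that the fibre diameters shrink uniformly (hence the projection $\pi$ becomes an $\epsilon$-Gromov--Hausdorff approximation to the base with the pushforward metric), and that the hypothesis that $((f_i\circ\pi_0)^{-1}(K),\epsilon_0^{-1/2}g^a_{\epsilon_0},p_i)$ already converges pins down the limit of the base geometry; then the convergence of the rescaled metrics $c_i g^a_{\epsilon_i}$ follows from comparing with $g^a_{\epsilon_0}$ via Lemma \ref{d1_and_de}, which gives the two-sided distance comparison $\epsilon^{1/2}d^\epsilon \sim \epsilon_0^{1/2}d^{\epsilon_0}$ up to uniform constants on the finite region. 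Finally, the Ricci-flat correction is absorbed using Corollary \ref{cor: limit}, since $R\epsilon_i^{1/6}\to 0$ for fixed $R$.

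\textbf{Main obstacle.} The hardest point is the behavior near the discriminant locus: in case (2) the limit $\mathbb{R}^2$ must be the metric completion of $(B_0\setminus\Delta_0,\omega_{SK})$, and one needs that the special K\"ahler metric has only mild (cone-type) singularities at the $I_1$-points so that filling them in gives a genuine length space — and more delicately, in the scaling of case (1) and (3), one must control whether the gluing regions (where the ansatz is the Ooguri--Vafa metric rather than semi-flat) contribute to the limit, which requires the estimate $d_i = O(\epsilon_i^{1/2})$ from Section \ref{sec: synergy} ensuring the gluing regions have negligible size relative to the rescaled base. Keeping track of the scalings $c_i$, $\epsilon_i$, $\alpha_i$ consistently through the hyperK\"ahler rotation and the three different normalizations (Tian--Yau, ansatz at $\epsilon_0$, ansatz at $\epsilon_i$) is the main bookkeeping challenge, and getting the strictly-increasing dependence on $C$ in case (2) right is where a sign or factor error would most easily creep in.
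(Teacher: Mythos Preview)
Your overall strategy for case (2) --- reduce to the ansatz via Corollary~\ref{cor: limit}, collapse the fibres using the diameter bound, and identify the limit with the special K\"ahler metric pulled back along $f_i$ --- is exactly the paper's argument, and your treatment there is fine. The gap is in cases (1) and (3), where your scaling bookkeeping is inverted and, more importantly, you invoke a geometric mechanism that cannot produce the stated limits.

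For case (3) you assert $c_i\epsilon_i^{-1}\to\infty$, but since $\epsilon_i=2\sqrt{2}\pi/\operatorname{Im}\tau_i$ the hypothesis $c_i\operatorname{Im}\tau_i\to 0$ forces $c_i\epsilon_i^{-1}\to 0$, not $\infty$. More seriously, your proposed mechanism --- that rescaling $W\,dz\otimes d\bar z$ ``collapses one of the two real dimensions'' --- cannot work: the special K\"ahler metric is a conformal factor times the Euclidean metric on the base, so multiplication by any scalar rescales both real directions isotropically and never produces a one-dimensional limit from a two-dimensional base. The paper's argument for (3) does not go through the ansatz reduction at all; it simply observes that in this regime the scaling is so small that one sees the tangent cone at infinity of the $ALH^*$ metric, which is $\mathbb{R}_+$ by \cite[Theorem~1.5]{Hein}. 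This is the missing input.

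For case (1) your reasoning is internally inconsistent: you say the base is ``rescaled to zero curvature effect'' (suggesting dilation), then call it ``a shrinking flat metric'' (suggesting contraction), and finally recover $\mathbb{R}^2$ only ``once renormalized''. But no further renormalization is permitted --- the pointed Gromov--Hausdorff limit is taken with the given scaling. The paper's argument is that the fibres still collapse while the base is \emph{dilated}, so the limit detects only the tangent plane of the special K\"ahler metric at $p_0=\lim f_i^{-1}(p_i)$; since $p_0$ lies in the smooth locus $K\subset B_0\setminus\Delta_0$, that tangent plane is flat $\mathbb{R}^2$.
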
 
\begin{proof}
For each $(\check{Y}_i,\check{D}_i,[\check{\omega}_i],c_i,p_i)$, we denote $\check{Y}_i\setminus \check{D}_i$ by $\check{X}_i$. By Theorem \ref{theorem:CJL}, there exists a special Lagrangian fibration on $\check{X}_i$, with the fibre class determined by the vanishing cycle in $H_1(\check{D}_i,\mathbb{Z})$. Moreover, the suitable hyperK\"ahler rotation $X_i$ can be compactified to a rational elliptic surface $Y_i$ by adding an $I_d$-fibre $D_i$ at its end again by Theorem \ref{theorem:CJL}. Denote by $\omega_i$ the Ricci-flat metric on $X_i$ from hyperK\"ahler rotation. From Theorem \ref{thm: local model}, we have $\omega_i$ is asymptotic to $\alpha_i\omega_{sf,b_{0,i},\epsilon_i}$, where 
 \begin{align*}
 	\epsilon_i=\frac{2\sqrt{2}\pi}{\mbox{Im}(\tau_i)}\rightarrow 0, \hspace{4mm}
 	\alpha_i=d^{\frac{1}{2}} 2^{-\frac{3}{4}}\pi\epsilon_i^{-\frac{1}{2}}, 
 \end{align*} and $b_{0,i}$ is bounded with bounds independent of $i$. From the Torelli theorem of $ALH^*$-gravitational instantons (Theorem \ref{torelli}), there exists a unique Ricci-flat metric in $[\omega_i]$ with the above asymptotic. 
Since hyperK\"ahler rotations do not change the underlying spaces, $p_i\in \check{X}_i$ naturally gives $p_i\in X_i$. To sum up, the given data $(\check{Y}_i,\check{D}_i,[\check{\omega}_i],c_i,p_i)$, as Riemannian manifolds, are the same as a sequence of $(Y_i,D_i,[\omega_i],\alpha_i, p_t)$, where 
\begin{itemize}
	\item $Y_i$ is a rational elliptic surface with an $I_d$-fibre $D_t$,
	\item $[\omega_i]\in H^2(X_i,\mathbb{R})$ a K\"ahler class, $X_i=Y_i\setminus D_i$.
	\item $\alpha_i\in \mathbb{R}_+$.
	\item $p_i\in X_i$ falls in the finite region and $p_i\rightarrow p_0\in Y_0$ on a smooth fibre. 
\end{itemize} From Theorem \ref{LLL} and Remark \ref{rk LLL}, we have $Y_i\rightarrow Y_0$ to a fixed rational elliptic surface with an distinguished (if more than one) $I_d$-fibre $D_0$. Denote $Y_0\setminus D_0$ by $X_0$. 
%

	Under the assumption $\mbox{Im}(\tau_i)\gg 0$, we have the Calabi-Yau metrics are close to the hyperK\"ahler rotation of certain scaling of semi-flat metrics (up to twisting of B-fields), which is flat along the smooth fibres away from the singular ones. Denote by $g_i$ the Riemannian metric on $\check{X}_i\cong X_i$.
	From Corollary \ref{cor: limit}, we have 
	 \begin{align*}
	 	\| \sigma_i (\omega_i-\mathbb{B}\omega^a_{\epsilon_i})\|_{C^{0,\alpha}_{\sigma_i g^a_{\epsilon_i}}(B^{\sigma g^a_{\epsilon_i}(X_i)}(x_i,R)))}= 	\|  \omega_i-\mathbb{B}\omega^a_{\epsilon_i}\|_{C^{0,\alpha}_{ g^a_{\epsilon_i}}(B^{ g^a_{\epsilon_i}(X_i)}(x_i,\sigma_i^{-1/2} R)))}\leq O(\sigma_i^{-\frac{1}{2}}R\epsilon_i^{\frac{2}{3}}). 
	 \end{align*} Equivalently, in terms of Gromov-Hausdorff distance, we have 
	 \begin{align}\label{eq: GH conv}
	 	 d_{GH}\bigg(  B^{\sigma_i g_i}(p_i,R), B^{ \sigma_i g^a_{\epsilon_i}(X_i)}(p_i,R) \bigg)\rightarrow 0 \mbox{ if } \sigma_i^{-\frac{1}{2}}\epsilon_i^{\frac{2}{3}}\rightarrow 0.
	 \end{align}
	For our purpose, we will always have  $\sigma_i=O(c_i(\mbox{Im}\tau_i)^{\frac{1}{2}})$ from the hyperK\"ahler relation in Theorem \ref{thm: local model}.
  We will first show (2), where we take $\sigma_i=O(\epsilon_i)$ or equivalently $c_i=O((\mbox{Im}\tau_i)^{-\frac{3}{2}})$. 
From Lemma \ref{ansatz metric} (7) and the explicit form of semi-flat metric, we have a uniform convergence
     \begin{align} \label{eq: GH collapsing}
     	d_{GH}\bigg((B^{\epsilon g^a_{\epsilon}(X_i)}(p_i,R), \epsilon g^a_{\epsilon}(X_i)), (\pi_i(B^{ \epsilon g^a_{\epsilon}(X_i)}(p_i,R)),\omega^i_{SK})\bigg)\rightarrow 0
    \end{align}	as $\epsilon \rightarrow 0$. 
    From the discussion in Section \ref{sec: synergy}, there exists $f_i:B_0\rightarrow B_i$ such that 
    \begin{align}\label{eq: GH base}
    	  d_{GH}((K_i,f_i^*\omega^i_{SK}), (K_i, \omega^0_{SK}))\rightarrow 0,
    \end{align} where $K_i$ is a complement of discs of diameter $(\mbox{Im}\tau_i)^{-1/2}\rightarrow 0$ (with respect to $\omega^0_{SK}$). 
    Then (2) follows from the diagonal sequence argument together with \eqref{eq: GH conv}\eqref{eq: GH collapsing} and \eqref{eq: GH base}.

    Now assuming $c_i(\mbox{Im}(\tau_i))^{3/2}\rightarrow \infty$ and $c_i (\mbox{Im}(\tau_i))^{-1/6}\rightarrow 0$. The later implies that \eqref{eq: GH conv} is valid and together with the explicit form of semi-flat metrics implies that the elliptic fibration is collapsing because $c_i\mbox{Im}(\tau_i)^{-1/2}\rightarrow 0$ from the assumption. Then the first assumption $c_i(\mbox{Im}(\tau_i))^{3/2}\rightarrow \infty$ implies that the Gromov-Hausdorff limit is the tangent plane of $p_0$ and this shows (1).

   Finally, (3) comes from the same argument that the tangent cone of the $ALH^*$-gravitational instantons, which is $\mathbb{R}_+$ \cite[Theorem 1.5]{Hein}. From \eqref{eq: GH conv}, we may study the Gromov-Hausdorff limit by replacing $\omega_i$ by $c_i(\mbox{Im}(\tau_i))^{1/2}\omega^a_{\epsilon_i}$ on $X_i$. Similar to the above cases, $c_i(\mbox{Im}(\tau_i))^{1/2}\rightarrow 0$ implies that the elliptic fibration is collapsing. From $f_i^*\omega^i_{SK}\rightarrow \omega^0_{SK}$, the desired pointed Gromov-Hausdorffo limit is exactly the tangent cone of $\omega_{SK}$ because $c_i (\mbox{Im}\tau_i)^{3/2}\rightarrow 0$. From the explicit calculation, with respect to $\omega^0_{SK}$ the distance from a fixed point to $z_0$ is of order $|\log{|z_0|}|^{3/2}$ when $|z_0|\rightarrow 0$ and the length of the circle of $\{|z|=|z_0|\}$ is $|\log{|z_0|}|^{1/2}$. This implies that the tangent cone of the base with respect to $\omega^0_{SK}$ is $\mathbb{R}_+$. 
   
\end{proof}
\begin{rk}
	 Here the assumption $c_i(\mbox{Im}(\tau_i))^{3/2}\rightarrow C$ is the replacement for bounded diameter in the work of Gross-Wilson \cite{GW} and Lott \cite{L}. Indeed, the assumption implies that given any two points in $X$, the distances with respect to the sequence of the metrics are bounded above and below.
\end{rk}

From the geometry of the semi-flat metric, it is natural to make the following conjecture:
\begin{conj}
	Under the assumption of Theorem \ref{thm: compactify 1}, we have 
	\begin{enumerate}
		\item If $c_i(\mbox{Im}\tau_i)^{-1/2}\rightarrow \infty$, then the pointed Gromov-Hausdorff limit is $\mathbb{R}^4$ with the flat metric. 
	\item If $c_i(\mbox{Im}\tau_i)^{-1/2}\rightarrow C$ and $\mbox{Im}(\tau_i)\rightarrow \infty$ for some constant $C>0$, then the pointed Gromov-Hausdorff limit is $\mathbb{R}^2\times T^2$ with the product of flat metrics such that the size of $T^2$ is strictly increasing depending on $C$.
	\end{enumerate}
\end{conj}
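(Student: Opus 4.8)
The proof would follow the three-step scheme behind Theorem \ref{thm: compactify 1}, with the extra input being a careful bookkeeping of the competing length scales (elliptic fibre, base, and radial direction) in the three regimes.

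First I would pass, via Theorem \ref{theorem:CJL} and Theorem \ref{local model}, from $(\check Y_i,\check D_i,[\check\omega_i],c_i,p_i)$ to the rational elliptic surface picture $(Y_i,D_i,[\omega_i],\alpha_i,p_i)$ with $D_i$ an $I_d$-fibre, $\epsilon_i=2\sqrt2\pi/\mathrm{Im}(\tau_i)$ the fibre-area parameter of the model at infinity and $\alpha_i\sim\mathrm{Im}(\tau_i)^{1/2}$; Theorem \ref{LLL} and Remark \ref{rk LLL} give $Y_i\to Y_0$, so over the finite region $(f_i\circ\pi_0)^{-1}(K)$ the special K\"ahler metrics on the bases converge. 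Then, exactly as in Theorem \ref{thm: compactify 1}, I would replace the genuine Ricci-flat metric by the explicit $B$-twisted semi-flat ansatz $\mathbb{B}_i\omega^a_{\epsilon_i}$: Corollary \ref{cor: limit} (with Lemma \ref{d1_and_de} to transport the estimate from the fixed metric $g^a_{\epsilon_0}$ to the relevant $g^a_{\epsilon_i}$) controls the difference on balls by $O(R\epsilon_i^{1/6})$, and since a $2$-form norm is divided by the scale, this stays negligible after rescaling by $c_i$ whenever $\mathrm{Im}(\tau_i)\to\infty$. In case (1) with $\mathrm{Im}(\tau_i)$ bounded (so the ansatz error need not be small) I would instead not use the ansatz: $B^{c_ig_i}(p_i,R)=B^{g_i}(p_i,R/\sqrt{c_i})$ is then a shrinking geodesic ball about a point in the finite region, where the convergence hypothesis gives a uniform lower bound on the harmonic radius of the Ricci-flat (Einstein) metric, so the blow-up converges to the flat ball in $\mathbb{R}^4$.

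The remaining step is to read off the limit of the model metric in the finite region, where the base is simply connected, the fibration is metrically a product of a flat $T^2$ over a piece of the special K\"ahler base, and the $B$-field affects only the complex structure and a fibrewise translation (Lemma \ref{lem twist}), not the flat fibre metric. If $c_i\to\infty$: both the fibre and the base are magnified and the curvature at $p_i$ measured at scale $c_i^{-1/2}$ tends to $0$, so fixed-radius balls converge to the flat ball in $\mathbb{R}^4$. If $c_i\to C>0$ while $\mathrm{Im}(\tau_i)\to\infty$: the intrinsic diameter of the base blows up, so a fixed-radius ball sees only an infinitesimal, hence flat, piece of it, while the fibre $T^2$ retains area comparable to $C$ and the cross term of the semi-flat metric is $O(|dz|)\to0$; thus the limit is the metric product $\mathbb{R}^2\times T^2$ with the $T^2$-factor strictly growing in $C$. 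If $c_i\to0$ with $c_i\mathrm{Im}(\tau_i)\to\infty$: the base still stretches while the fibre collapses, leaving flat $\mathbb{R}^2$.

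The main obstacle I anticipate is making the scale bookkeeping uniform and, relatedly, pinning down case (2). One has to show the cross term of the semi-flat metric genuinely decouples and that the rescaled special K\"ahler metric converges to the \emph{flat} metric on the infinitesimal neighbourhood of $\pi(p_0)$ — this is where being away from the discriminant is used — so that the limit is a true Riemannian product and not a twisted $T^2$-bundle; one must verify that fixed-radius balls in the rescaled metric remain inside $(f_i\circ\pi_0)^{-1}(K)$ in cases (2), (3) and shrink to a point in case (1); and the case split in (1) has to be handled with care, because the constant in Corollary \ref{cor: limit} depends on the $CYL(1/3)$-constant of $(X,c_ig^a_{\epsilon_i})$, which is $O(c_i)$, so in the regime $c_i\to\infty$ one must balance this growth against the gain from scaling the $2$-form or else argue directly with interior Schauder estimates for the Einstein equation, the ball being tiny in that regime. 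Finally one should check that the chosen normalisation of $c_i$ indeed makes the fibre stabilise at the rate implicit in the statement of (2).
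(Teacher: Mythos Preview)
The statement you are attempting to prove is labelled in the paper as a \emph{conjecture}; the authors do not give a proof, only the sentence ``From the geometry of the semi-flat metric, it is natural to make the following conjecture''. There is therefore no paper proof to compare your proposal against.

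Your outline follows the same three-step scheme as the proof of Theorem \ref{thm: compactify 1} and correctly isolates the scaling regimes, but the obstacles you list at the end are exactly why the authors left these cases open. The a priori estimate in Theorem \ref{c2alpha}/Corollary \ref{cor: limit} is proved for the \emph{unrescaled} ansatz $g^a_{\epsilon}$, with constants depending on the $CYL(\frac{1}{3})$ constant; once you rescale by $c_i$ that constant grows like $O(c_i)$, and the paper gives no mechanism to absorb this in cases (1) and (2). Your proposed fallback in case (1) --- local harmonic-radius/Einstein regularity on shrinking balls --- is the right idea, but it requires a uniform lower bound on the harmonic radius of $g_i$ at $p_i$ along the varying sequence $Y_i\to Y_0$, which the paper never establishes (the convergence hypothesis in Theorem \ref{thm: compactify 1} is stated for the ansatz metric $g^a_{\epsilon_0}$, not for the Ricci-flat $g_i$). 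In case (2) the limit is $4$-dimensional, so there is no collapsing and Corollary \ref{cor: limit} only bounds $\omega_i-\mathbb{B}_i\omega^a_{\epsilon_i}$ in $C^{0,\alpha}$ on a ball whose $g^a_{\epsilon_i}$-radius is $O(\epsilon_i^{-1/2})$; transferring this to a fixed ball in the $c_i g_i$ metric and then showing the cross term $B\,dz$ in \eqref{sf metric} decouples so that the limit is a genuine Riemannian product rather than a nontrivial flat $T^2$-bundle are both steps that go beyond what the paper proves. In short, your sketch is a plausible programme for attacking the conjecture, not a proof, and the paper contains no argument to compare it to.
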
 Notice that under the assumption above , the assumption of \eqref{eq: GH conv} does not hold. Finally, we give the following remark to conclude the paper. 
\begin{rk}
	The moduli space of metric K3 is a symmetric space 
	\begin{align*}
			SO(\Lambda_{K3})\backslash SO(3,19)/SO(3)\times SO(19),
		\end{align*} where $\Lambda_{K3}$ is the K3 lattice. The compactification of symmetric spaces are widely studied, for instance \cite{BJ}. Odaka-Oshima used the known compactifications from algebraic geometry to recover the geometric properties of the K3 metrics near certain boundary strata of the K3 moduli spaces \cite{OO}. From Torelli theorem of the $ALH^*$-gravitational instantons and the description of period domain, one can expect that the moduli space of $ALH^*$-gravitational instantons are also symmetric spaces. For instance, the moduli space of $ALH^*_9$-gravitational instantons is 
	\begin{align*}
			SL(2,\mathbb{Z})\backslash \mathcal{H}\times \mathbb{R}_+\cong SL(2,\mathbb{R})/SO(2)\times \mathbb{R}_+,   
	\end{align*} again a  symmetric space, where $\mathcal{H}$ is the upper half plane. One may follow the similar idea of Odaka-Oshima to understand the behavior of the metrics near the boundary of the moduli spaces.

\end{rk}


\begin{thebibliography}{99}
	\bibitem{A} M. T. Anderson, {\em Ricci curvature bounds and Einstein metrics on compact manifolds}, J. Amer. Math. Soc.2 (1989), no.3, 455-–490.
	
	\bibitem{BJ} A. Borel and L. Ji, {\em Compactifications of symmetric and locally symmetric spaces},
	Math. Theory Appl.
	Birkhäuser Boston, Inc., Boston, MA, 2006, xvi+479 pp.
	
	\bibitem{BPV} W. Barth, C. Peters, and A. Van de Ven, {\em Compact complex surfaces}, Ergebnisse der Mathematik und ihrer Grenzgebiete (3), 4. Springer-Verlag, Berlin, 1984.
	
	\bibitem{CC} G. Chen and X. Chen, Gravitational instantons with faster than quadratic curvature decay (I), Acta Math. 227 (2021), no. 2, 263–307. 
	
	\bibitem{CL} T. Collins and Y.-S. Lin, {\em Recent progress on SYZ mirror symmetry for some non-compact Calabi-Yau surfaces}, preprint 2022, to appear in Survey in Diff. Geom.
	
	\bibitem{CPS} M. Carl, M. Pumperla and B. Siebert, {\em A tropical view on Landau-Ginzburg models}, Acta. Math. Sin.-English Ser. 40, 329–382 (2024).
	
	\bibitem{CJL} T. Collins, A. Jacob, and Y.-S. Lin, {\em Special Lagrangian tori in log Calabi-Yau manifolds}, Duke Math. J. 170 (7) 1291-- 1375, May 15, 2021.
	
	\bibitem{CJL2} T. Collins, A. Jacob, and Y.-S. Lin, {\em The SYZ mirror symmetry conjecture for del Pezzo surfaces and rational elliptic surfaces}, preprint 2020, arXiv:2012.05416.
	
	\bibitem{CJL3} T. Collins, A. Jacob, and Y.-S. Lin, {\em The Torelli theorem for $ALH^*$-gravitational instantons}, Math Forum, Sigma (2022), Vol. 10: e79 1--11.
	
	\bibitem{CV} G. Chen, J. Viaclovsky, {\em Gravitational instantons with quadratic volume growth}, preprint 2021, arXiv:2110.06498.
	
	\bibitem{CVZ} G. Chen, J. Viaclovsky, and R. Zhang, {\em Collapsing Ricci-flat metrics on elliptic K3 surfaces}
	Communications in Analysis and Geometry, 28 (2020), no.8, 2019-2133.
	
	\bibitem{CVZ2}  G. Chen, J. Viaclovsky, and R. Zhang, {\em Torelli-type theorems for gravitational instantons with quadratic volume growth}, preprint 2021, arXiv: 2112.07504. 
	
	\bibitem{D} A. Dold, {\em Lectures on algebraic topology},
	Reprint of the 1972 edition. Classics in Mathematics. Springer-Verlag, Berlin, 1995. xii+377 pp. ISBN: 3-540-58660-1. 
	
	\bibitem{D2} S.K.Donaldson, {\em Two-forms on four-manifolds and elliptic equations}, Inspired by S.S.Chern, Nankai Tracts Math.,vol.11, World Sci.Publ., Hackensack, NJ, 2006, pp.153–172.
	
	\bibitem{Fr} R. Friedman, {\em On the geometry of anti-canonical pairs}, preprint, arXiv: 1502.02560.
	
	\bibitem{Fos} L. Foscolo, {\em ALF gravitational instantons and collapsing Ricci-flat metrics on the K3 surface}, J. Differential Geom. 112 (2019), no. 1, 79–120.
	
	\bibitem{FM} R. Friedman, and J. Morgan, Smooth four-manifolds and complex surfaces, Ergebnisse der Mathematik und ihrer Grenzgebiete (3) [Results in Mathematics and Related Areas (3)], 27. Springer-Verlag, Berlin, 1994. x+520 pp.
	
	\bibitem{GHK} M. Gross, P. Hacking and S. Keel, {\em Moduli of surfaces with an anti-canonical cycle}, Compos. Math. {\bf 151} (2015), no. 2, 265--291.
	
	\bibitem{GW} M. Gross and P. M. H. Wilson, {\em Large complex structure limits of K3 surfaces}, J. Differential Geom. 55 (2000), no. 3, 475–546.
	
	\bibitem{GSVY} G. Greene, A. Shapere, C. Vafa, S.-T. Yau, {\em Stringy cosmic strings and noncompact Calabi-Yau manifolds}, Nuclear Phys. B, {\bf 337} (1990), no. 1, 1--36.
	
	
	\bibitem{Haw} S. W. Hawking, {\em Gravitational instantons}, Phys. Lett. A 60 (1977), no. 2, 81–83. 83.53.
	
	\bibitem{Hein} H.-J. Hein, {\em Gravitational instantons from rational elliptic surfaces}, J. Amer. Math. Soc. {\bf 25} (2012), no. 2, 355--393.
	
	\bibitem{HL} R. Harvey, B. Lawson, {\em 
		Calibrated geometries},
	Acta Math. 148 (1982), 47–157.
	
	\bibitem{HSVZ} H.-J. Hein, S. Sun, J. Viaclovsky, and R. Zhang, {\em Nilpotent structures and collapsing Ricci-flat metrics on $K3$ surfaces}, J. Am. Math. Soc. 35 (2022), no. 1, 123-209.
	
	\bibitem{HSVZ2}  H.-J. Hein, S. Sun, J. Viaclovsky, and R. Zhang, {\em Gravitational instantons and del Pezzo surfaces}, preprint 2021, arXiv: 2111.09287.
	
	\bibitem{HSZ} S. Honda, S. Sun and R. Zhang
	{\em A note on the collapsing geometry of hyperkähler four manifolds}, 
	Sci. China Math. 62 (2019), no. 11, 2195–2210.
	
	\bibitem{K} V. S. Kulikov, {\em Degenerations of K3 surfaces and Enriques surfaces},  Izv. Akad. Nauk SSSR Ser. Mat. 41 (1977), no. 5, 1008–1042, 1199.
%
	
	\bibitem{KS} M. Kontsevich and Y. Soibelman, {\em Homological mirror symmetry and torus fibrations}, Symplectic geometry and mirror symmetry (Seoul, 2000), 203–263, World Sci. Publ., River Edge, NJ, 2001. 
	
	\bibitem{L} J. Lott, The collapsing geometry of almost Ricci-flat 4-manifolds, Comment. Math. Helv. 95 (2020), no. 1, 79–98.
	
	\bibitem{LL} T.-J. Lee and Y.-S. Lin, {\em Period domains for gravitational instantons}, preprint 2022, arXiv: 2208.13083, to appear in Trans. Amer. Math. Soc.
	
	\bibitem{LLL} S.-C. Lau and T.-J. Lee and Y. -S. Lin, {\em SYZ mirror symmetry for del Pezzo surfaces and affine structures}, preprint 2022, arXiv: 2206.01681, to appear in Adv. Math. 
	 
	
	\bibitem{LSZ} Y. -S. Lin and Sidharth Soundararaja and X. Zhu, {\em Non-Collapsing Degeneration of Gravitational Instantons}, work in progress. 
	
	\bibitem{OO} Y. Odaka and Y. Oshima, {\em 
	Collapsing K3 surfaces, tropical geometry and moduli compactifications of Satake, Morgan-Shalen type}, 
	MSJ Memoirs, 40. Mathematical Society of Japan, Tokyo, 2021. x+165 pp.
	
	\bibitem{P} P. Petersen, {\em 
	Riemannian geometry}.
	Third edition. Graduate Texts in Mathematics, 171. Springer, Cham, 2016. xviii+499 pp. 
	
	\bibitem{Per} U. Persson, Configurations of Kodaira fibers on rational elliptic surfaces,
	Mathematische Zeitschrift, Vol. 205(1) 1990, 1–47.
	
	
	\bibitem{SZ} S. Song, R. Zhang, {\em Collapsing geometry of hyperK\"ahler $4$-manifolds and applications}, preprint 2021, arXiv: 2108.12991v1. 
	
	
	
	\bibitem{TY} G. Tian, and S.-T. Yau, {\em Complete K\"ahler manifolds with zero Ricci curvature. I.} J. Amer. Math. Soc. {\bf 3} (1990), no. 3, 579--609.
	
	
	\bibitem{Y} S.-T. Yau, {\em On the Ricci curvature of a compact K\"ahler manifolds and the complex Monge-Amp\`ere equation, I}, Comm. Pure. Appl. Math {\bf 31} (1978), no. 3, 339-411.
	
	
\end{thebibliography}
\end{document}